\definecolor{indigo}{rgb}{0.29, 0.0, 0.51}  
\definecolor{lblue}{RGB}{0, 174, 239}
\theoremstyle{plain}
\newtheorem{theorem}{Theorem}
\newtheorem{proposition}[theorem]{Proposition}
\newtheorem{lemma}[theorem]{Lemma}
\theoremstyle{definition}
\theoremstyle{remark}
\newtheorem{remark}[theorem]{Remark}
\newtheorem{example}[theorem]{Example}
\numberwithin{theorem}{section}
\newcommand{\dfn}[1]{{\em #1}}        
\newcommand{\R}{\mathbb{R}}           
\newcommand{\Z}{\mathbb{Z}}           
\newcommand*\bigcdot{\mathpalette\bigcdot@{0.6}}
\newcommand*\bigcdot@[2]{\mathbin{\vcenter{\hbox{\scalebox{#2}{$\m@th#1\bullet$}}}}}
\DeclareMathOperator\tb{tb}                   
\DeclareMathOperator\tbb{\overline {\tb}}     
\DeclareMathOperator\rot{rot}                 
\begin{document}

\title{Cable links of uniformly thick knot types}

\author{Rima Chatterjee}

\author{John B. Etnyre}

\author{Hyunki Min}

\author{Thomas Rodewald}

\address{Department of Mathematics \\ Ohio State Univerity\\ Columbus, OH}
\email{chatterjee.198@osu.edu, rchattmath@gmail.com}

\address{School of Mathematics \\ Georgia Institute of Technology \\  Atlanta, GA}
\email{etnyre@math.gatech.edu}
\email{tomrodewald@gatech.edu}

\address{Department of Mathematics\\University of California\\LA, California}
\email{hkmin27@math.ucla.edu}

\begin{abstract}
  In this paper, we study Legendrian realizations of cable links of knot types that are uniformly thick but not Legendrian simple, extending prior work of Dalton, the second author, and Traynor. This leads to new phenomena, such as stabilized Legendrian links that are smoothly isotopic and component-wise Legendrian isotopic, but are not Legendrian isotopic. In our study, we establish new results for cable links whose cabling slope is sufficiently negative.
We will also show how to classify Legendrian knots in (most) negative cables of twist knots. This is done by introducing a new technique to the study of cables based on Legendrian surgeries. 
\end{abstract}

\maketitle

\section{Introduction}
Legendrian knots have been studied quite a bit \cite{ChenDingLi15, EliashbergFraser09, EtnyreHonda01b, EtnyreHonda03, EtnyreHonda05, EtnyreLafountainTosun12, EtnyreNgVertesi13}, leading to a rich theory with many applications to the construction of contact manifolds and the understanding of their subtle properties. There has been less study of Legendrian links.  The first classification results for Legendrian links were obtained by Ding and Geiges in \cite{DingGeiges10}, where they studied a class of two-component links consisting of an unknot and a cable of the unknot that were Legendrian simple (that is determined by their classical invariants). Geiges and Onaran \cite{GeigesOnaran20, GeigesOnaran20b} studied Legendrian Hopf links in any contact structure on $S^3$, and Geiges, Onaran, and the first author \cite{ChatterjeeGeigesOnaranPre} studied Hopf links in many contact structures on the lens space $L(p,1)$. The first general structure result is due to Dalton, Traynor, and the second author. In \cite{DaltonEtnyreTraynor2024}, they classified Legendrian torus links and Legendrian representatives of cable links of a knot type $K$ that was both uniformly thick and Legendrian simple. We will discuss these terms later, but they impose a strong restriction on the knot types that can be considered. Nonetheless, they were able to show many new results about Legendrian isotopies of such links. 

The main goal of this paper is to extend the results of \cite{DaltonEtnyreTraynor2024} to a wider class of knots. We will be able to extend some of the results to any knot type and other results to uniformly thick knot types, thus removing one of the hypotheses needed in \cite{DaltonEtnyreTraynor2024} to understand Legendrian representatives of cable links. This study will reveal new phenomena about Legendrian isotopy classes of links. For example, a corollary of our work establishes the following new phenomena.

\begin{theorem}\label{newpenom}
There exist pairs of Legendrian links $L$ and $L'$ that are smoothly isotopic, the components of $L$ and $L'$ have been stabilized many times,
and each component of $L$ is Legendrian isotopic to a component of $L'$, but $L$ and $L'$ are not Legendrian isotopic. 
\end{theorem}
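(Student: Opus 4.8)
The plan is to obtain these examples as an application of the classification of Legendrian cable links established in this paper for uniformly thick knot types. First I would fix a knot type $K$ that is uniformly thick but not Legendrian simple; the twist knots singled out in the abstract are natural candidates, since among them one finds knots admitting two Legendrian representatives $K_1$ and $K_2$ with equal classical invariants $(\tb,\rot)$ that are nonetheless not Legendrian isotopic. The non-simplicity of $K$ is the source of the phenomenon: it is what will allow two cable links to agree on all the data recorded component-by-component while disagreeing on a piece of data visible only at the level of the whole link.

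Next I would choose a cabling slope $q/p$ sufficiently negative that the main classification applies, and consider the associated $d$-component cable link (with $d\geq 2$), whose strands are parallel curves on a convex torus bounding a neighborhood of a Legendrian representative of $K$. Using the classification, I would build two such links $L$ and $L'$ so that: (i) they are smoothly isotopic, which is automatic since they are assembled from the same smooth cabling data on neighborhoods of $K$; and (ii) each component of $L$ is Legendrian isotopic to a component of $L'$. The idea behind (ii) is that a \emph{single} cable strand of a uniformly thick knot remembers only the classical invariants of its core, so that as individual Legendrian knots the strands are pairwise isotopic even when $K_1$ and $K_2$ are not. To produce the stabilization clause — and because this subtlety typically appears only below the top of the relevant mountain range — I would take the construction far enough down that every component is a large stabilization of its maximal-$\tb$ representative.

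The main obstacle is the last clause: showing $L$ and $L'$ are not Legendrian isotopic. Here I would invoke the full classification, which records, beyond the individual component data, relative information about how the strands sit together in the neighborhood of $K$, equivalently which Legendrian representative of the core is being cabled. I would arrange $L$ to cable $K_1$ and $L'$ to cable $K_2$, so that a Legendrian isotopy $L\to L'$ would force a Legendrian isotopy $K_1\to K_2$, contradicting non-simplicity. The delicate point — and the heart of the argument — is to verify that this distinguishing invariant genuinely descends from the link to the isotopy class of the cabled core, and that it cannot be erased by reparametrizing or permuting the components. Once the classification theorem is in hand, the remaining work is to read off and compare the invariants of $L$ and $L'$; the difficulty lies in establishing that the link-level invariant is well defined and an honest obstruction, not in any explicit computation.
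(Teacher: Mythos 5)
Your plan breaks down at two decisive points, and they interact. First, the slope regime: you propose to take $q/p$ ``sufficiently negative,'' i.e.\ a lesser-sloped cable. But in exactly that regime the paper's classification shows the phenomenon cannot occur: by Theorems~\ref{negative} and~\ref{lesserlink}, for lesser-sloped cables (of uniformly thick knots, and in particular of the relevant twist knots) the stabilization thresholds at which cables of distinct representatives merge are the \emph{same} for the individual knots as for the links, so two such Legendrian links are Legendrian isotopic if and only if they are component-wise isotopic. Examples of the kind sought in Theorem~\ref{newpenom} therefore do not exist there. The paper's examples live instead in the greater-sloped regime (the $(4,2)$-cable of $K_{-5}$, slope $1/2>\lceil w(K)\rceil=-3$, via Theorem~\ref{maingsc}) or at the integer slope $q=\tbb(K)$ ($2$-copies of maximal Thurston--Bennequin representatives of $K_{-2n}$, via Theorems~\ref{maininteger} and~\ref{integerex}).

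Second, your mechanism for component-wise isotopy --- that ``a single cable strand remembers only the classical invariants of its core'' --- is false. Cables of non-isotopic Legendrian representatives with equal classical invariants are in general non-isotopic: for greater-sloped cables the diamonds $D(L_{(p,q)})$ of non-isotopic $L$ are disjoint, and items~(2) and~(3) of Theorem~\ref{lesserknots} distinguish lesser-sloped cables. Component-wise isotopy has to be manufactured by stabilizing, using $S_\pm^p(L_{(p,q)})=(S_\pm(L))_{(p,q)}$ together with a pair $K_1\neq K_2$ satisfying both $S_+(K_1)=S_+(K_2)$ and $S_-(K_1)=S_-(K_2)$; and, crucially, the signs must be \emph{mixed across components}: one component stabilized (essentially) only negatively, another only positively. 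Your prescription of pushing every component ``far down'' the mountain range does the opposite: once every component is stabilized enough in both signs, the two links lie in the common cone $C\bigl((S_+\circ S_-(K_1))_{n(p,q)}\bigr)=C\bigl((S_+\circ S_-(K_2))_{n(p,q)}\bigr)$ and become Legendrian isotopic. Relatedly, your proposed obstruction --- that a Legendrian isotopy $L\to L'$ would force $K_1\simeq K_2$ --- is unavailable after stabilization, since the underlying Legendrian of a stabilized cable link is not unique (the cones over different cores intersect). The actual obstruction, via Lemma~\ref{lem:minimal} and item~(4) of Theorem~\ref{maingsc}, is that the mixed-sign stabilization pattern is incompatible with the link lying in $C(L''_{n(p,q)})$ for \emph{any} common stabilization $L''$ of $K_1$ and $K_2$; that, not an isotopy of cores, is what keeps the links distinct.
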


\begin{remark}
Previously, Jordan and Traynor \cite{JordanTraynor06} had constructed pairs of links that were smoothly isotopic and component-wise isotopic but not smoothly isotopic. The examples had components that were maximal Thurston-Bennequin invariant Legendrian unknots and were distinguished using generating family techniques. These were the first known examples of this surprising phenomenon, and the only known examples before the ones in this paper. What makes our examples particularly interesting is that the links have been stabiilized multiple times. This forces most invariants, such as Legendrian contact homology or generating function homology, to vanish. So it would seem these examples can only be distinguished via the geometric techniques used in this paper. 
\end{remark}

We will also see the following result.

\begin{theorem}\label{thm:isotopy}
  The components of a maximum Thurston-Bennequin Legendrian realization of a $(np,nq)$-cable of a uniform thick knot type $K$ must all be Legendrian isotopic. If $q/p\geq \lceil w(K)\rceil$, then the same result is true without the assumption of uniform thickness of $K$. (Here $w(K)$ is the width of $K$; see below for a definition.)

  More generally, for a uniformly thick knot type $K$, any two components of a Legendrian representative of the $(np,nq)$-cable of $K$ with the same classical invariants are Legendrian isotopic. 
\end{theorem}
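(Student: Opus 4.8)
The plan is to reduce all three assertions to a single normal-form statement: every Legendrian representative of the $(p,q)$-cable of $K$ with prescribed classical invariants can be Legendrian isotoped to lie as a standard Legendrian ruling curve of slope $q/p$ on the convex boundary of a standard neighborhood of a maximal Thurston--Bennequin Legendrian representative $K_0$ of $K$, with the dividing set and ruling slope determined by $\tb$ and $\rot$. Once two components are carried into this normal form with the same ruling slope and the same dividing data on the bounding torus, they are joined by the standard connected family of Legendrian rulings of fixed slope on a convex torus (Giroux flexibility / the Legendrian realization principle), and hence are Legendrian isotopic.

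First I would put a Legendrian representative $L$ of the $(np,nq)$-cable into normal form: after a Legendrian isotopy, each component lies as a Legendrian ruling on a convex torus concentric about a Legendrian core representing $K$, so each $L_i$ is a $(p,q)$-cable sitting on the boundary of a solid-torus neighborhood $N_i$ of $K$, with classical invariants read off from the dividing slope of $\partial N_i$ and the ruling slope $q/p$. The heart of the argument is to thicken $N_i$: since $K$ is uniformly thick, every such neighborhood thickens to a standard neighborhood of a maximal-$\tb$ core, whose convex boundary carries two dividing curves of the extremal slope, and $L_i$ is carried along to a ruling on that maximal boundary. The cabling-slope hypothesis --- either $q/p\geq\lceil w(K)\rceil$, or uniform thickness --- guarantees we are in the regime where $q/p$ lies on the correct side of every dividing slope encountered during the thickening, so that no stabilization of $L_i$ is forced and the ruling slope is preserved.

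With the normal form in hand the three conclusions follow. For the general statement, two components $L_i,L_j$ with equal $\tb$ and $\rot$ thicken to rulings of slope $q/p$ on maximal boundaries with identical dividing data, and are therefore Legendrian isotopic. For the maximal-$\tb$ statement, the extremal case realizes each component as a Legendrian divide on the maximal torus; divides of a fixed slope on a convex torus with two dividing curves form a single Legendrian isotopy class with a determined rotation number, so every component lies in that one class and the ``same classical invariants'' hypothesis becomes automatic. In this extremal regime the bound $q/p\geq\lceil w(K)\rceil$ plays the role of uniform thickness: it forces any neighborhood on which the cable lives to present $L_i$ as a divide on a maximal piece, so no thickening through exotic contact structures is required.

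The main obstacle is the thickening step past the core when $K$ is \emph{not} Legendrian simple. Uniform thickness controls solid tori whose core is $K$, but I must ensure the thickening is realized by an isotopy moving the single component $L_i$ to normal form without secretly depending on the (non-unique) Legendrian isotopy class of the core: the complement of a cable contains a copy of the complement of $K$, so a priori the non-simplicity of $K$ could obstruct bringing two cables with equal classical invariants to the same position, or could force their normalizations onto standard neighborhoods of non-isotopic maximal cores. Resolving this requires showing that in the relevant slope regime the isotopy of the component only ever uses the part of the contact structure near $K$ that uniform thickness (or the slope bound) renders standard, so that the Legendrian type of $L_i$ is independent of the potentially exotic exterior of $K$. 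This independence is the technical core of the argument and is precisely where the hypotheses are indispensable.
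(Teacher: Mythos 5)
Your argument fails at its final step, and that step is the entire content of the theorem. You normalize each component separately onto the boundary of a standard neighborhood of \emph{some} maximal-$\tb$ core and then conclude that two components with equal classical invariants are Legendrian isotopic because their tori carry ``identical dividing data.'' That inference is false when $K$ is not Legendrian simple: identical abstract dividing data (two dividing curves of slope $\tbb(K)$) on two convex tori does not yield an ambient contact isotopy between the tori; for that you would need the underlying maximal-$\tb$ cores themselves to be Legendrian isotopic, which is exactly what can fail. The paper's running example makes this concrete: $K_{-5}$ has two non-isotopic maximal-$\tb$ representatives $L^1, L^2$, and the cables $L^1_{(2,1)}$ and $L^2_{(2,1)}$ have the same classical invariants and each sits as a ruling curve on a torus with the same dividing data, yet they are not Legendrian isotopic (their diamonds are disjoint, by \cite{ChakrabortyEtnyreMin2024}). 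Hence what the theorem really requires is a \emph{global} statement about the link: all components of a single representative must normalize onto the boundary of a neighborhood of the \emph{same} underlying Legendrian core. You flag this in your last paragraph as the ``technical core,'' but you give no argument for it; it is not a detail to be checked, it is the hard part. In the paper it is precisely what Theorems~\ref{maingsc}, \ref{maininteger}, and~\ref{negative} establish, via Lemma~\ref{lem:minimal} (well-definedness of the classical invariants of a minimal underlying Legendrian), a discretization-of-isotopy induction controlling every convex torus appearing between two positions of the link, and, for lesser slopes, distinguishing the relevant tori by Legendrian surgery on their cores.

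There is also a slope-regime error: your single normal form does not exist in the lesser-sloped range that the uniform-thickness statement must cover. If $q/p<\tbb(K)$, a ruling curve of slope $q/p$ on the boundary of a standard neighborhood of a maximal-$\tb$ representative has $\tb = pq - |p\,\tbb(K)-q| < pq$, so it is not a maximal-$\tb$ cable; the maximal-$\tb$ representatives in this regime are Legendrian divides on convex tori of dividing slope exactly $q/p$ (or twisted $n$-copies when the slope is an integer), and thickening such a torus out to the maximal one necessarily stabilizes the components, contrary to your claim that the slope hypotheses ensure no stabilization is forced. This is why the paper splits the proof into the three cases of Theorems~\ref{maingsc}, \ref{maininteger}, and~\ref{negative} rather than running a single thickening argument.
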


Theorem~\ref{thm:isotopy} follows directly from Theorems~\ref{maingsc}, \ref{maininteger}. and~\ref{negative} below.
We note that  \cite{DaltonEtnyreTraynor2024} showed that there are similar restrictions on the components of a maximum Thurston-Bennequin Legendrian realization of a $(np,nq)$-cable of a Legendrian simple and uniformly thick knot type. In that paper, however, it was shown that all components of the cable must have the same classical invariants, which implies that they are Legendrian isotopic. In Theorem~\ref{thm:isotopy}, we see that all the components are Legendrian isotopic, even without assuming the Legendrian simpleness condition. 

\begin{remark}
Below, we will also give a classification of Legendrian representatives of negative cable knots of twist knots. In \cite{ChakrabortyEtnyreMin2024} a general procedure for classifying negative cable knots was given, but it required extensive knowledge of contact structures on solid tori in that knot type, which is usually difficult to determine. We will show how to use an understanding of Legendrian surgeries on Legendrian knots to understand Legendrian representatives of their cables. To the author's knowledge, this is a new approach to this much studied problem. 
\end{remark}

To state our results, we recall a few definitions. Given a knot type $K$ we define the \dfn{width} $w(K)$ of $K$ to be the supremum of slopes of dividing curves on the boundary of solid tori with a convex boundary in the knot type $K$ (meaning that the core of the solid torus is isotopic to $K$). We call $K$ \dfn{uniformly thick} if any solid torus in the knot type $K$ is contained in a solid torus that is a standard neighborhood of a maximal Thurston-Bennequin representative of $K$. We notice that this implies $w(K)=\tbb(K)$, where $\tbb(K)$ is the maximal Thurston-Bennequin invariant of $K$. 

For relatively prime integers $p$ and $q$, the \dfn{$(p,q)$-cable} of a knot $K$ is the curve $K_{(p,q)}$ on the boundary of tubular neighborhood $N$ of $K$ realizing the homology class $p\lambda+q\mu$ in $H_1(\partial N)$ where $\mu$ is the meridian of $N$ and $\lambda$ is the Seifert longitude of $K$. The $(np,nq)$-cable link $K_{n(p,q)}$ is simply $n$ copies of $K_{(p,q)}$ on $\partial N$. 

Below, when $q/p>\lceil w(K)\rceil$, we will completely classify Legendrian representatives of $(np,nq)$-cable links for {\bf any knot type $K$} in terms of Legendrian representatives of $K$. We call these \dfn{greater-sloped cables}. We are able to recover all the results of \cite{DaltonEtnyreTraynor2024} with no hypothesis on $K$! We will also be able to understand much about such cables when $q/p\leq \tbb(K)$ if $K$ is uniformly thick, thus removing one of the hypotheses needed in \cite{DaltonEtnyreTraynor2024}. When $q/p<\tbb(K)$ we call these \dfn{lesser-sloped cables} and when $q/p=\tbb(K)$ we call these \dfn{$\tbb(K)$-sloped cables}. We note that when considering uniform thick knot types $w(K)=\tbb(K)$; so all possible slopes will be considered for such knot types. To apply our results to some specific knots we will also show the following theorem.

\begin{theorem}\label{thm:uniform}
  The twist knots $K_n$ for $n \notin [-3,2]$ are uniformly thick.
\end{theorem}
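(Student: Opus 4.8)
The plan is to argue directly with convex surface theory that every solid torus in the knot type $K_n$ can be enlarged to a standard neighborhood of a maximal Thurston--Bennequin Legendrian representative. Since such a standard neighborhood already realizes boundary dividing slope $\tbb(K_n)$, it will be enough to prove two statements: that no convex solid torus in the knot type $K_n$ has dividing slope exceeding $\tbb(K_n)$, so that $w(K_n)=\tbb(K_n)$; and that any convex solid torus $S$ with dividing slope $s<\tbb(K_n)$ is contained in a convex solid torus whose dividing slope is strictly closer to $\tbb(K_n)$. Iterating the second statement and using uniqueness of the solid torus at the maximal slope will then give uniform thickness.

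The central tool will be the genus-one Seifert surface of $K_n$. First I would fix a solid torus $S$ representing $K_n$, pass to its complement $M=S^3\setminus\interior S$, and isotope the once-punctured torus Seifert surface $\Sigma\subset M$ so that $\partial\Sigma$ is a Legendrian ruling curve on the convex boundary $\partial S$ and $\Sigma$ itself is convex. Since $\chi(\Sigma)=-1$, an Euler-characteristic count together with the number of points of $\partial\Sigma\cap\Gamma_{\partial S}$---which is determined by the slope $s$ and the twist parameter $n$---constrains the dividing set $\Gamma_\Sigma$ severely. The key point is to show that for $n\notin[-3,2]$ the twisting forces $\Gamma_\Sigma$ to contain a boundary-parallel arc cutting off a half-disk; the Legendrian realization principle and the bypass-attachment lemma then convert this arc into a bypass for $\partial S$ lying in $M$.

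Next I would use this bypass to thicken $S$. Attaching a bypass to $\partial S$ from the $M$ side enlarges the solid torus, and for a bypass of the appropriate sign the dividing slope moves along the Farey graph strictly toward $\tbb(K_n)$. Re-running the convexity analysis on the enlarged torus produces a further thickening bypass, and one continues until the dividing slope equals $\tbb(K_n)$. At that point the classification of tight contact structures on $T^2\times I$ identifies the resulting maximal solid torus with the standard neighborhood of a Legendrian $K_n$ having $\tb=\tbb(K_n)$, and the same analysis shows this neighborhood is unique up to isotopy. The upper bound $w(K_n)\le\tbb(K_n)$ follows as well, since a solid torus with dividing slope greater than $\tbb(K_n)$ would contain a Legendrian core of Thurston--Bennequin invariant exceeding $\tbb(K_n)$, which is impossible.

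The step I expect to be the main obstacle is the control of $\Gamma_\Sigma$ and the guaranteed existence of a thickening bypass. A priori $\Gamma_\Sigma$ might consist only of essential arcs and closed curves, or of boundary-parallel arcs of the wrong sign, and such a configuration corresponds exactly to a non-thickenable tight contact structure on the complement; excluding these possibilities requires a careful case analysis of the admissible dividing configurations in terms of $n$. This is precisely where the hypothesis $n\notin[-3,2]$ enters: for the excluded small values the twisting is too weak to force a usable boundary-parallel arc---indeed $K_0$ is the unknot, which is not uniformly thick---so those cases fail or need separate treatment, whereas for $n\notin[-3,2]$ the magnitude and sign of the twisting rule out every obstructing configuration.
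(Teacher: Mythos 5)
There is a genuine gap, and it sits exactly at the step you yourself flag as ``the main obstacle'': the exclusion of non-thickenable dividing-set configurations is never carried out, and the data you propose to use cannot carry it out. The constraints you list on $\Gamma_\Sigma$ do not see the twist parameter at all: $\chi(\Sigma)=-1$ for \emph{every} genus-one knot, and the count $|\partial\Sigma\cap\Gamma_{\partial S}|$ depends only on the dividing slope $s$ measured against the Seifert framing and on the number of dividing curves --- your parenthetical claim that it depends on $n$ is false. Since $K_{-2}$ is the right-handed trefoil, which is also a genus-one twist knot and is famously \emph{not} uniformly thick (it admits non-thickenable solid tori \cite{EtnyreHonda05}), any argument built only from these data would prove too much; the hypothesis $n\notin[-3,2]$ must enter through a concrete analysis, namely of the tight contact structure on the complement of your surface (a genus-two handlebody), which your outline never touches. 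This analysis is the entire content of the paper's proof, done with a different decomposition: a four-punctured sphere $S$ splitting the knot complement into two genus-two handlebodies, a normalization of $\Gamma_S$ into null/horizontal/vertical types on two pairs of pants, the imbalance principle applied to the compressing disks $D_1,D_2$ (this is where $n\leq -4$ is actually used, via the slope $\tfrac n2\leq -2$ of $\partial D_2$), and a case-by-case check that every resulting bypass either destabilizes an auxiliary unknot $U$ or thickens $N$, together with the classification of Legendrian twist knots \cite{EtnyreNgVertesi13} to finish once $N$ has integer dividing slope.

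Two of your supporting arguments are also flawed. The claimed upper bound $w(K_n)\leq\tbb(K_n)$ --- ``a solid torus with dividing slope greater than $\tbb(K_n)$ would contain a Legendrian core of Thurston--Bennequin invariant exceeding $\tbb(K_n)$'' --- is incorrect: a solid torus with non-integer dividing slope $s\in(\tbb(K),\tbb(K)+1)$ contains Legendrian cores only of $\tb\leq\lfloor s\rfloor=\tbb(K)$, so no contradiction arises. Indeed the right-handed trefoil has $w(K)>\tbb(K)$, so this inequality is not free; establishing it is part of what a uniform-thickness proof must do, not an input to it. Finally, at the top of your iteration you appeal to the classification of tight contact structures on $T^2\times I$ to identify the maximal solid torus, but what is actually needed is that a Legendrian representative of $K_n$ with non-maximal $\tb$ destabilizes --- this is a statement about the knot type, supplied by \cite{EtnyreNgVertesi13}, and without it the thickening process cannot be pushed to slope $\tbb(K_n)$.
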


The twist knot $K_n$ is shown in Figure~\ref{fig:twk}. The uniform thickness of the positive twist knots with $n>2$ was shown in \cite{George13,GeorgeMyers20}, and it will be shown for the negative twist knots in Section~\ref{sec:uniform}. When $n=0,-1$, the knot $T_n$ is the unknot, and when $n=-2$ we have the right-handed trefoil. These knots are not uniformly thick. When $n=1$, the twist knot is the left-handed trefoil, which is uniformly thick \cite{EtnyreHonda01b}. So the only unknown cases are  $n=2,-3$ where $K_n$ is the figure-eight knot. The existence of non-thickenable tori in the figure-eight knot complement \cite{ConwayMin20} suggests that it is not uniformly thick, but it is not yet known whether these tori sit in the standard contact structure on $S^3$.  

It is well-known \cite{Chekanov02, EtnyreNgVertesi13}, that most of the negative twist knots are not Legendrian simple. We will recall their classification when needed in the examples below. 

\begin{figure}[htb]

{
\begin{overpic}
{fig/twistknot}
\put(68, 11){$n$}
\end{overpic}}
\caption{The twist knot $T_n$ where the box contains $n$ positive half=twists if $n>0$ and $|n|$ negative half twists if $n<0$.} 
\label{fig:twk}
\end{figure}

We note that the understanding of cables with slopes $q/p \in (\tbb(K),\lceil w(K)\rceil]$ seems difficult as we know in this range there are ``Legendrian large" cables, that is cables with Thurston-Bennequin invariant larger than $pq$. Such cables will prevent the sort of analysis done here, and in addition, we know very little about solid tori with convex boundary having these dividing slopes. 

\subsection{Greater-sloped cables}
To state our results in this case, we need to recall the work of Chakraborty, the second and third authors  \cite{ChakrabortyEtnyreMin2024} on cabled knot types. Given a Legendrian knot $L$ in the knot type $K$ and relatively prime integers $p$ and $q$ with $q/p>\lceil w(K)\rceil$, we define the standard $(p,q)$-cable of $L$ as follows. Let $N$ be a standard neighborhood of $L$ with $\partial N$ having ruling slope $q/p$\footnote{We will assume the reader is familiar with standard results in contact geometry, such as standard neighborhoods of Legendrian knots and ruling curves, as can be found in \cite{ChakrabortyEtnyreMin2024, DaltonEtnyreTraynor2024}}, the \dfn{standard $(p,q)$-cable of $L$}, denoted $L_{(p,q)}$, is one of the ruling curves on $\partial N$. We define the \dfn{diamond of $L_{(p,q)}$} to be the collection of Legendrian knots
\[
  D(L_{(p,q)})=\{S_+^i\circ S_-^j(L_{(p,q)}): 0\leq i,j\leq p-1\}
\]
where $S_\pm(L)$ denotes the $\pm$-stabilization of $L$. 

It will be convenient to denote the set of Legendrian isotopy classes of Legendrian representatives of $K$ by $\mathcal{L}(K)$. 
The main results about greater-sloped cables\footnote{In \cite{ChakrabortyEtnyreMin2024}, it was called a \dfn{sufficiently positive cable}.} in \cite{ChakrabortyEtnyreMin2024} can be summarized as follows:

\begin{itemize}
  \item $\tb(L_{(p,q)})=pq-|p\tb(L)-q|$ and $\rot(L_{(p,q)})=q\rot(L)$,
  \item $\mathcal{L}(K_{(p,q)})= \cup_{L\in \mathcal{L}(K)} D(L_{(p,q)})$,
  \item if $L$ and $L'$ are not isotopic then $D(L_{(p,q)})$ and $D(L'_{(p,q)})$ are disjoint, and
  \item $S_\pm^p(L_{(p,q)})=(S_\pm(L))_{(p,q)}$.
\end{itemize}

It is not hard to see that this provides a complete classification Legendrian representatives of $K_{(p,q)}$ and their relation under stabilization, in terms of the same understanding of Legendrian representatives of $K$. From above, it is clear that for any element $L'\in \mathcal{L}(K_{(p,q)})$ there is a unique Legendrian $L\in \mathcal{L}(K)$ such that $L'\in D(L_{(p,q)})$. We call $L$ the \dfn{underlying knot} of $L'$. The classification says that two elements in $\mathcal{L}(K_{(p,q)})$ are isotopic if and only if they have the same underlying knot type, Thurston-Bennequin invariant, and rotation number. 

Given a Legendrian knot $L$ we define the \dfn{cone of $L$} to be 
\[
  C(L)=\{S^i_+\circ S^j_-(L): i,j\in \Z_{\geq0}\},
\]
that is $C(L)$ is the set of all Legendrian knots obtained from $L$ by stabilization. By the well-definedness of stabilization, it is clear that two Legendrian links in $C(L)$ are isotopic if and only if they have the same rotation number and Thurston-Bennequin invariant. 

Now we consider Legendrian links. Let $L$ be a Legendrian knot in the knot type $K$ and $N$ a standard neighborhood of $L$ with ruling slope $q/p$, where $q/p > \lceil \omega(K) \rceil$, as before. We define the \dfn{standard $(np,nq)$-cable link $L_{n(p,q)}$} to be a union of $n$ distinct ruling curves on $\partial N$. We can also define $C(\Lambda)$ for a Legendrian link $\Lambda$ as the set of all stabilizations of $\Lambda$ (including $\Lambda$ itself). An important property of the cone is its behavior under cabling.  

\begin{lemma}\label{cones}
  Given a knot type $K$, relatively prime integers $p$ and $q$ such that $q/p>\lceil w(K)\rceil$, and $n>1$, we have the following.
  \begin{enumerate}
    \item For $L\in\mathcal{L}(K),$ and $L'\in C(L)$, we have $C(L'_{n(p,q)})\subset C(L_{n(p,q)})$. 
    \item If $\Lambda \in C(L_{n(p,q)})$ then $\Lambda$ lies on $\partial N$ where $N$ is a standard neighborhood of $L$.
  \end{enumerate}
\end{lemma}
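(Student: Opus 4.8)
The plan is to build both statements on the single-knot results of \cite{ChakrabortyEtnyreMin2024} recalled above, especially the relation $S_\pm^p(L_{(p,q)})=(S_\pm(L))_{(p,q)}$, and to supply the one extra input needed to pass from knots to links: that the relevant Legendrian isotopies and stabilizations can be carried out \emph{simultaneously} on all $n$ parallel ruling curves while keeping them disjoint and Legendrian. Throughout I would use the convex-surface picture in which $\partial N$ is a convex torus with two dividing curves of slope $\tb(L)$, carrying the $n$ components of $L_{n(p,q)}$ as disjoint parallel ruling curves of slope $q/p$ (possible since $q/p>\lceil w(K)\rceil\ge \tb(L)$). Because stabilizing a component lowers its Thurston--Bennequin invariant, the geometric content of both parts is that such stabilizations remain visible on tori around $L$, realized by pushing ruling curves out of minimal position with respect to the dividing set.

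For part (2), let $\Lambda\in C(L_{n(p,q)})$, so its $i$-th component is $S_+^{a_i}S_-^{b_i}(L_{(p,q)})$. I would realize $\Lambda$ on a single convex $\partial N$ as follows. Begin with the $n$ disjoint slope-$q/p$ ruling curves $\gamma_1,\dots,\gamma_n$ in minimal position with respect to the dividing set $\Gamma$; these cut $\partial N$ into $n$ annuli, each meeting both the positive and the negative region of $\partial N\setminus\Gamma$. To realize the required stabilizations on the $i$-th component, isotope $\gamma_i$ within $\partial N$ so that it acquires $a_i$ extra canceling pairs of intersections with $\Gamma$ bulging into the positive region and $b_i$ pairs bulging into the negative region; by the standard correspondence between non-minimal position on a convex surface and stabilization (with sign determined by the sign of the region), the resulting curve is $S_+^{a_i}S_-^{b_i}(L_{(p,q)})$. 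The checks are that these zig-zags can be confined to small disks in the annular strips between consecutive components, so that all $n$ curves stay disjoint, and that every component of the complement of $\Gamma\cup(\bigcup_i\gamma_i)$ still meets $\Gamma$, so that the Legendrian realization principle produces the whole link on $\partial N$ at once; both hold because the zig-zags are supported in small disks and there are only two dividing curves, leaving ample room.

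For part (1) it suffices, by iterating, to treat a single stabilization $L'=S_\pm(L)$ and then pass to further stabilizations. Let $N'\subset N$ be a standard neighborhood of $L'$, so that $(S_\pm(L))_{n(p,q)}$ is $n$ disjoint ruling curves of slope $q/p$ on $\partial N'$, and consider the toric annulus between $\partial N'$ and $\partial N$ recording the stabilization. Pushing a single ruling curve outward from $\partial N'$ to $\partial N$ realizes it, by the single-knot relation $S_\pm^p(L_{(p,q)})=(S_\pm(L))_{(p,q)}$, as $S_\pm^p(L_{(p,q)})$ on $\partial N$. I would perform this push simultaneously on all $n$ parallel curves, arranging that they remain disjoint throughout by using that the rulings on the nested convex tori match up across the toric annulus. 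The outcome is that $(S_\pm(L))_{n(p,q)}$ is Legendrian isotopic, \emph{as a link}, to $L_{n(p,q)}$ with each component stabilized by $S_\pm^p$, hence $(S_\pm(L))_{n(p,q)}\in C(L_{n(p,q)})$. Iterating gives $(S_+^aS_-^b(L))_{n(p,q)}\in C(L_{n(p,q)})$ for every $L'=S_+^aS_-^b(L)\in C(L)$, and since any element of $C(L'_{n(p,q)})$ is a further component-wise stabilization of $(L')_{n(p,q)}$, it too lies in the stabilization-closed set $C(L_{n(p,q)})$, proving the inclusion.

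The main obstacle is precisely this upgrade from knots to links: the single-knot statements control one ruling curve, but I must guarantee that the stabilizing isotopies—the zig-zags of part (2) and the outward push of part (1)—can be executed on all $n$ components at once without creating intersections and while preserving the convex-surface hypotheses needed for Legendrian realization. Everything else is bookkeeping on top of \cite{ChakrabortyEtnyreMin2024}; the care lies in checking disjointness and the non-isolating condition throughout these simultaneous moves. A secondary point to verify is that the identification of a non-minimally-positioned ruling curve with a specific iterated stabilization $S_+^{a}S_-^{b}(L_{(p,q)})$ is consistent with the sign conventions coming from the positive and negative regions of $\partial N$, so that the link produced is genuinely the asserted stabilization of $L_{n(p,q)}$ and not merely component-wise so.
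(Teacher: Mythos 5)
Your part~(2) is correct and is essentially the paper's own argument: the standard cable link lies on $\partial N$, and any stabilization of it is realized by perturbing the ruling curves so that they intersect the dividing set of $\partial N$ in extra (canceling) pairs of points, with the sign of each stabilization determined by which region of $\partial N \setminus \Gamma$ the wiggle enters; this keeps the link on $\partial N$.

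Part~(1), however, has a genuine gap at exactly the step you yourself flag as ``the main obstacle.'' The relation $S_\pm^p(L_{(p,q)})=(S_\pm(L))_{(p,q)}$ from \cite{ChakrabortyEtnyreMin2024} is a statement about Legendrian isotopy classes of \emph{knots}: it gives, for each single component, an abstract ambient Legendrian isotopy, not a geometric ``push'' through the toric annulus between $\partial N'$ and $\partial N$. So there is nothing that can be ``performed simultaneously on all $n$ parallel curves,'' and the phrase ``the rulings on the nested convex tori match up across the toric annulus'' is an assertion, not an argument. This is not a pedantic complaint: the central phenomenon of this paper (Theorem~\ref{newpenom}) is that links whose components are Legendrian isotopic one by one need not be Legendrian isotopic as links, so a component-wise appeal to a knot-level result can never, by itself, produce the link-level conclusion $C(L'_{n(p,q)})\subset C(L_{n(p,q)})$. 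The missing mechanism---and this is how the paper actually proves~(1)---is to take $n$ \emph{disjoint} annuli $A_1,\dots,A_n$, each running from a component of $L_{n(p,q)}$ on $\partial N$ to the corresponding component of $L'_{n(p,q)}$ on $\partial N'$ (where $L'=S_+^k\circ S_-^l(L)$), make them convex, and count: the dividing set of $A_i$ meets the first boundary curve $|p\tb(L)-q|$ times and the second $|p(\tb(L)-k-l)-q|$ times, so $A_i$ carries $p(k+l)$ boundary-parallel dividing curves, i.e.\ bypasses destabilizing the component of $L'_{n(p,q)}$, and a rotation-number computation shows $pk$ of them are positive and $pl$ negative. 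Because the annuli are pairwise disjoint, these destabilizations occur simultaneously, which is precisely the link statement that $L'_{n(p,q)}$ is obtained from $L_{n(p,q)}$ by stabilizing every component $pk$ times positively and $pl$ times negatively. Your outline (nested standard neighborhoods, component-wise stabilization, iteration over $C(L)$) is the right skeleton, but without the disjoint convex annuli and the bypass/rotation-number count the argument does not close.
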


We can now state the classification of Legendrian knots of greater-sloped cables. 

\begin{theorem}\label{maingsc}
  Given a knot type $K$ let $L^1,\ldots, L^k$ denote the non-destabilizable Legendrian knots in $\mathcal{L}(K)$. For relatively prime integers $p$ and $q$ such that $q/p>\lceil w(K)\rceil$ and $n>1$, we have 
  \begin{enumerate}
    \item
    \[
      \mathcal{L}\left(K_{n(p,q)}\right)=\bigcup_{i=1}^k C\left((L^i)_{n(p,q)}\right).
    \]
  
    \item  Two Legendrian links in $C((L^i)_{n(p,q)})$ are isotopic if and only if they have the same classical invariants. 
    
    \item $\Lambda$ and $\Lambda'$ in $\mathcal{L}(K_{n(p,q)})$ are Legendrian isotopic if and only if there is some $L\in \mathcal{L}(K)$ such that $\Lambda$ and $\Lambda'$ are both in $C(L_{n(p,q)})$ and the components of $\Lambda$ and $\Lambda'$ have the same classical invariants.

    \item For $L,L' \in \mathcal{L}(K)$, $\Lambda\in C(L_{n(p,q)})\cap C(L'_{n(p,q)})$ if and only if there is some $L''\in C(L)\cap C(L')$ and $\Lambda \in C(L''_{n(p,q)})$. 
  \end{enumerate}
  
  Moreover, given any $\Lambda\in \mathcal{L}(K_{n(p,q)})$, any permutation of the components of $\Lambda$ preserving the classical invariants can be achieved by a Legendrian isotopy.  
\end{theorem}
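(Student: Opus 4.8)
The plan is to reduce the statement to permuting the parallel ruling curves that underlie $\Lambda$. By part (1), $\Lambda \in C\big((L^i)_{n(p,q)}\big)$ for some non-destabilizable $L^i$, so by Lemma~\ref{cones}(2) I may assume, after a preliminary Legendrian isotopy, that the components of $\Lambda$ are stabilizations $S_+^{a_j}\circ S_-^{b_j}(\gamma_j)$ of $n$ parallel ruling curves $\gamma_1,\dots,\gamma_n$ of slope $q/p$ on the convex boundary $T=\partial N$ of a standard neighborhood $N$ of $L^i$, each stabilization realized by a small identical tangle supported in a ball meeting $T$ near one point. By the cable and stabilization formulas, $\tb$ records $a_j+b_j$ and $\rot$ records $a_j-b_j$, so two components have equal classical invariants exactly when they share the stabilization pattern $(a_j,b_j)$. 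Consequently a permutation $\sigma$ preserving the classical invariants maps each class of components with a fixed pattern to itself.

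Since any such $\sigma$ restricts to a permutation of each class, and every permutation of a set is a product of transpositions within that set, it suffices to realize a single transposition of two components $\Lambda_a,\Lambda_b$ with the same pattern by a Legendrian isotopy of $\Lambda$ that returns every other component to itself. Because $\Lambda_a$ and $\Lambda_b$ are parallel ruling curves carrying identical tangles, they are Legendrian isotopic; the only content is performing the exchange in the presence of the remaining components, which may lie between them on $T$ and may belong to other classes.

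I would construct this exchange inside an $I$-invariant neighborhood $T\times(-\epsilon,\epsilon)$ of the convex torus, so that it extends by the identity to an ambient contact isotopy fixing the rest of the manifold. The two facts I would use are that the contact structure on such a neighborhood is invariant under translations of $T^2$ in both directions, and that a single Legendrian ruling curve can be pushed onto a nearby parallel convex torus $T\times\{s\}$ and slid transverse to the ruling direction through Legendrian curves, by an isotopy supported in a thin neighborhood of the surface it sweeps out. The swap is then: push $\Lambda_a$ off $T$ into the collar, slide it transverse to the rulings—using the three-dimensional room to pass over any intermediate components without collision, the curve remaining Legendrian isotopic throughout since all nearby tori carry the same ruling data—until it lies over the longitude of $\Lambda_b$; do the symmetric move with $\Lambda_b$; and let both descend onto $T$ into the exchanged slots, carrying their tangles along. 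Intermediate components are briefly pushed aside and returned, passing a curve over them being an unobstructed three-dimensional move.

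The main obstacle is promoting this manifestly smooth swap to a contact isotopy: while a global $T^2$-translation of the invariant neighborhood is a contactomorphism, moving a single curve transverse to the rulings while fixing everything else requires building the isotopy from a compactly supported contact Hamiltonian and checking that the required sliding can indeed be done through Legendrian curves on and near convex tori in standard form. I expect this to follow from the standard flexibility of Legendrian curves near convex tori together with the rotational symmetry of the standard neighborhood model, and that once it is in place the bookkeeping—that the stabilization tangles are transported correctly and that the non-involved components return to themselves—is routine.
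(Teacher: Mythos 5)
Your proposal addresses only the final ``Moreover'' clause of Theorem~\ref{maingsc}, and it does so by \emph{assuming} Item~(1) (``By part (1), $\Lambda \in C\bigl((L^i)_{n(p,q)}\bigr)$\dots''). But Items~(1)--(4) are the substance of the theorem and are not proved anywhere in your proposal. In the paper, Item~(1) is imported from \cite{DaltonEtnyreTraynor2024} (with an argument that uniform thickness is not needed for greater-sloped cables), Items~(2) and~(3) are short, and Item~(4) is the real work: it requires introducing minimal underlying Legendrians (Lemma~\ref{lem:minimal}), then a discretization-of-isotopy argument that tracks, bypass attachment by bypass attachment, a nested family of solid tori containing a standard neighborhood of $L_{min}$, with a delicate case analysis at integer dividing slopes. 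None of this has any counterpart in your proposal, so as a proof of the stated theorem it is incomplete in an essential way.

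Even for the permutation statement, the central move of your construction --- pushing one component into a collar $T\times(-\epsilon,\epsilon)$ and sliding it transverse to the rulings, ``using the three-dimensional room to pass over any intermediate components without collision'' --- is precisely the step that fails in general, and you correctly flag it as the main obstacle but wrongly expect it to follow from ``standard flexibility of Legendrian curves near convex tori.'' There is no such flexibility: your swap mechanism nowhere uses the hypothesis $q/p>\lceil w(K)\rceil$, so if it worked it would apply verbatim to components of lesser-sloped cables sitting on a convex torus; but by Theorem~\ref{integerlesserslopeordered} the cyclic ordering of such components is a genuine Legendrian-isotopy invariant (only cyclic permutations are realizable when $q<\tbb(K)$, and none of the maximal-$\tb$ components can be permuted when $q=\tbb(K)$). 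Hence any correct argument must exploit the greater-sloped hypothesis in an essential way; the paper does so by invoking the proof of Lemma~7.11 of \cite{DaltonEtnyreTraynor2024} together with the well-definedness of stabilization, rather than by a local exchange in an invariant collar.
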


We note that Item~(1) identifies all possible Legendrian links in $\mathcal{L}(K_{n(p,q)})$ while Items~(2) through~(4) determine with two Legendrian links from Item~(1) are Legendrian isotopic. Thus we have a complete classification. We will illustrate this with a simple example. 

\begin{example}
Consider the twist knot $K_{-5}$, see Figure~\ref{fig:twk}. We will describe the classification of Legendrian knots in terms of its mountain range. Recall we have the map
\[
\Phi: \mathcal{L}(K)\to \Z^2:L\mapsto (\rot(L),\tb(L)).
\]
The mountain range of $K$ is the image of $\Phi$ decorated with the number of elements in $\mathcal{L}(K)$ that map to a given point. Technically, we should also specify how each element in the mountain range behaves under stabilization, but in the examples we consider, this will be clear. From \cite{EtnyreNgVertesi13} we know the classification of Legendrian representatives of $K_{-5}$ is given on the left-hand side of Figure~\ref{fig:kn3}. 

\begin{figure}[htb]

{
  \begin{overpic}
  {fig/firstex}
  \put(244, 103){$2$}
  \put(260, 87){$2$}
  \put(213, 87){$2$}
  \put(236, 77){$2$}
  \put(50, 85){$2$}
  \end{overpic}}
  \caption{The mountain range for $K_{-5}$ on the left. There are two Legendrian representatives when $(\rot,\tb)=(0,-3)$ and one for all other points in the diagram. On the right is the mountain range for $(K_{-5})_{(2,1)}$. The peak of the mountain range is $(0,-5)$. The colors of the dots indicate the diamonds of the Legendrian representatives of $K_{-5}$.} 
  \label{fig:kn3}
\end{figure}

Specifically, there are two Legendrian representatives $L^1$ and $L^2$ with $(\rot,\tb)=(0,-3)$, and all other representatives are stabilizations of these and determined by their Thurston-Bennequin invariant and rotation number. 

Now consider the $(2,1)$-cable $(K_{-5})_{(2,1)}$. Its mountain range is shown on the right of the figure. According to our discussion of cables above, the top red dot represents the standard $(2,1)$-cables $L^1_{(2,1)}$ and $L^2_{(2,1)}$ while the other red dots are $S_+(L^1_{(2,1)}), S_-(L^1_{(2,1)}), S_+\circ S_-(L^1_{(2,1)})$ and $S_+(L^2_{(2,1)}), S_-(L^2_{(2,1)}), S_+\circ S_-(L^2_{(2,1)})$. All other dots are stabilizations of these and are determined by their classical invariants. 

We now consider the $(4,2)$-cable link. We have two representatives in $\mathcal{L}((K_{-5})_{2(2,1)})$ where all components have the maximal Thurston-Bennequin invariant. They are 
\[
  \Lambda^1 = \Lambda^1_1\cup \Lambda^1_2 = L^1_{2(2,1)} \;\text{ and }\; \Lambda^2 = \Lambda^2_1\cup \Lambda^2_2 = L^2_{2(2,1)}
\]
where each $\Lambda^1_i$ is isotopic to $L^1_{(2,1)}$ and each $\Lambda^2_i$ is isotopic to $L^2_{(2,1)}$. 

\begin{remark}
  We are already seeing a restriction on the components of Legendrian representatives of $(K_{-5})_{2(2,1)}$ coming from the classification. {\em A priori}, one might think there is a Legendrian realization of $(K_{-5})_{2(2,1)}$ with one component being isotopic to $L^1_{(2,1)}$ and the other isotopic to $L^2_{(2,1)}$, but this is not the case! While restrictions on the components of Legendrian realizations of links have been seen before \cite{DaltonEtnyreTraynor2024}, this is the first time we see restrictions on the components of the cable that go beyond the classical invariants. 
\end{remark}

According to (1) of Theorem \ref{maingsc}, all other elements in $\mathcal{L}((K_{-5})_{2(2,1)})$ are stabilizations of $\Lambda^1$ and $\Lambda^2$ which we denote by $\Lambda^i_{m,n,k,l}$, by which we mean the result of stabilizing the first component of $\Lambda^i$, $m$ times positively and $n$ times negatively, while the second component is stabilized $k$ times positively and $l$ times negatively. 

If $m,n \leq 1$ or $k,l \leq 1$, then $\Lambda^1_{m,n,k,l}$ and $\Lambda^2_{m,n,k,l}$ are different since their components are not Legendrian isotopic. If $m,n,k,l \geq 2$, then $\Lambda^1_{m,n,k,l}$ and $\Lambda^2_{m,n,k,l}$ are determined by their classical invariants since they are in the cone $C((S_+\circ S_- (L^1))_{2(2,1)}) = C((S_+\circ S_-(L^2))_{2(2,1)})$. We can also see that if $m,k \geq 2$, then $\Lambda^1_{m,n,k,l}$ and $\Lambda^2_{m,n,k,l}$ are determined by their classical invariants since they are both in the cone $C((S_+(L^1))_{2(2,1)})=C((S_+(L^2))_{2(2,1)})$. Of course, a similar result holds when $n,l\geq 2$.  

Lastly we consider the case $m,l\leq 1,$ $n,k\geq 2$ or $m,l\geq 2,$ $n,k\leq 1$. Then $\Lambda^1_{m,n,k,l}$ sits in $C(\Lambda^1)$ and $\Lambda^2_{m,n,k,l}$ sits in $C(\Lambda^2)$. Since $m,l \leq 1$ or $n,k \leq 1$, it is clear that both $\Lambda^1_{m,n,k,l}$ and $\Lambda^2_{m,n,k,l}$ do not sit in $C(L'_{2(2,1)})$ where $L'$ is either $S_{+}(L^1) = S_{+}(L^2)$ or $S_{-}(L^1) = S_{-}(L^2)$. That is, there is no $L \in C(L^1) \cap C(L^2)$ such that $\Lambda^i_{m,n,k,l} \in C(L_{2(2,1)})$ for $i=1,2$. By (4) of Theorem~\ref{maingsc}, $\Lambda^i_{m,n,k,l} \notin C(\Lambda^1) \cap C(\Lambda^2)$, and thus by (3) of Theorem~\ref{maingsc}, $\Lambda^1_{m,n,k,l}$ is not Legendrian isotopic to $\Lambda^2_{m,n,k,l}$. but it is clear that each component of one link is Legendrian isotopic to a component of the other link. This completes the classification of Legendrian representatives of $(K_{-5})_{2(2,1)}$ and establishes a new phenomenon for Legendrian links.  

\begin{proof}[Proof of Theorem~\ref{newpenom}]
  The last example above gives examples as claimed in the theorem.
\end{proof}
\end{example}

\begin{remark}
  We note that one can find many similar, but much more complex, examples by considering greater-sloped cables of connect sums of negative torus knots, cables of positive torus knots, and other twist knots. 
\end{remark}

We finish this section by noting that any permutation of the components of a greater-sloped cable can be achieved by a Legendrian isotopy if the classical invariants are preserved. 

\begin{theorem}[Ordered Classificaiton]\label{greaterordered}
  Let $K$ be a knot type and $p$ and $q$ be relatively prime integers such that $q/p>\lceil w(K)\rceil$. Given any element $\Lambda\in \mathcal{L}(K_{n(p,q)})$ any permutation of the components that respects the classical invariants of the components can be realized by a Legendrian isotopy.
\end{theorem}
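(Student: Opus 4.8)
The plan is to reduce the statement to a single geometric move—exchanging two components that have the same classical invariants—and then to build that move explicitly inside a standard neighborhood. First I would invoke Theorem~\ref{maingsc}(1) to write $\Lambda\in C(L_{n(p,q)})$ for some $L\in\mathcal{L}(K)$, and then Lemma~\ref{cones}(2) to place all $n$ components on $\partial N$ for a single standard neighborhood $N$ of $L$, each component being a ruling curve of slope $q/p$ carrying some stabilization profile $S_+^{a_i}\circ S_-^{b_i}$. Since a $\pm$-stabilization lowers $\tb$ by one and changes $\rot$ by $\pm 1$, two components have equal classical invariants precisely when they have the same profile $(a_i,b_i)$, which by the standard-cable description is exactly when they are Legendrian isotopic as knots. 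Hence a permutation respecting the classical invariants is the same as one preserving the partition of the components into Legendrian-isotopy classes; such a permutation lies in the product of the symmetric groups on these classes, and each of those is generated by transpositions of pairs with identical invariants. It therefore suffices to realize, by an ambient Legendrian isotopy fixing all remaining components, the transposition of two components $\Lambda_i$ and $\Lambda_j$ that have the same classical invariants.

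Second, I would construct this transposition using nested standard neighborhoods. Because $q/p>\lceil w(K)\rceil$, the slope-$q/p$ ruling curves are realized on the boundaries of a family of nested standard neighborhoods $N'\subset N$ of $L$, with the cable and stabilization structure of \cite{ChakrabortyEtnyreMin2024} preserved. I would push only $\Lambda_i$ and $\Lambda_j$ radially inward onto $\partial N'$, leaving the other $n-2$ components fixed on $\partial N$; since $\partial N'$ and $\partial N$ are disjoint, the two moving curves never meet the fixed ones. On $\partial N'$ the two curves are parallel, Legendrian isotopic ruling curves and cobound an annulus $A\subset\partial N'$. The key step is a local Legendrian isotopy supported in $N'$ that rotates $A$ by a half turn and thereby exchanges its two boundary curves; this is where the equality of classical invariants is used, as it removes any obstruction to interchanging the two curves within their common Legendrian type. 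Pushing the two exchanged curves radially back out to the vacated angular slots on $\partial N$ then completes the transposition $(i\,j)$ while fixing every other component.

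The main obstacle is the local swap in the middle step: producing an explicit Legendrian isotopy of $N'$ that exchanges two parallel, Legendrian-isotopic ruling curves, and checking that every intermediate curve remains Legendrian and stays in the correct isotopy class, together with the contact-geometric bookkeeping (existence of the nested neighborhoods and thickened-torus models) that makes the radial pushes legitimate. This bookkeeping is exactly what the hypothesis $q/p>\lceil w(K)\rceil$ guarantees through the results of \cite{ChakrabortyEtnyreMin2024}. Once the transposition is established, composing such moves realizes an arbitrary invariant-preserving permutation of the components, proving the theorem; this is also consistent with, and refines to the ordered setting, the unordered classification recorded in Theorem~\ref{maingsc}.
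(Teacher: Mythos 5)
Your proposal is correct in substance, but it takes a visibly different route from the paper: the paper's entire proof is a two-sentence citation, invoking the proof of Lemma~7.11 of \cite{DaltonEtnyreTraynor2024} and observing that the uniform-thickness hypothesis there is unnecessary for slopes above $\lceil w(K)\rceil$, whereas you reconstruct the underlying geometry from scratch (which is very likely the same geometry as in that lemma's proof). Your reduction to transpositions is clean --- equal classical invariants force equal stabilization profiles, hence Legendrian-isotopic components --- and your swap move (push the two relevant components onto the boundary of a smaller standard neighborhood $N'\subset N$, exchange them there, push them back into the vacated slots) is exactly what makes the statement work for greater-sloped cables, since the hypothesis $q/p>\lceil w(K)\rceil$ is what places the whole link on $\partial N$ via Theorem~\ref{maingsc}(1) and Lemma~\ref{cones}(2); note this use of Theorem~\ref{maingsc}(1) is non-circular, as that item rests on Proposition~7.7 of \cite{DaltonEtnyreTraynor2024} rather than on the ordered classification. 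Two cautions on the middle step you flag as the main obstacle. First, the exchange should not be phrased as a ``half-turn flip'' of the annulus $A$ (a flip of an annulus is not obviously a contact move); rather, with only two components on $\partial N'$, the transposition is a \emph{cyclic} permutation of two curves, realized by an isotopy through the $S^1$-family of ruling curves on $\partial N'$ (each curve travelling around the torus in the same transverse direction, never crossing the other), which extends by the Legendrian isotopy extension theorem to an ambient contact isotopy supported near $\partial N'$ and hence away from the fixed components; this also explains why no such move exists for three or more curves confined to an essential torus in the lesser-sloped case, where only cyclic permutations survive. Second, the components of $\Lambda$ are stabilized ruling curves, so either carry the stabilizations along as local zig-zags or, more cleanly, realize the permutation on the underlying standard link $L_{n(p,q)}$ first and then invoke the well-definedness and locality of stabilization, using that the permutation preserves profiles; the equality of invariants enters there, not as an ``obstruction'' to the swap itself. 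What the paper's citation buys is brevity; what your construction buys is a self-contained argument that makes transparent exactly where the slope hypothesis substitutes for uniform thickness.
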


\begin{proof}
  This theorem follows directly from the proof of Lemma~7.11 in \cite{DaltonEtnyreTraynor2024}. The lemma assumes the knot type is uniformly thick, but for cables with slope larger than $\omega(L)$ it also holds. 
\end{proof}

\subsection{Integer lesser-sloped and \texorpdfstring{$\tbb(K)$}{tbb(K)}-sloped cables}
We begin with some standard constructions of integer lesser-sloped cables. Given a Legendrian knot $L$, we can consider a standard neighborhood $N$ of $L$, and in the standard model, there is an annulus $A$ that contains $L$, and its characteristic foliation is linear, and each leaf is Legendrian isotopic to $L$. The \dfn{$n$-copy of $L$} is the link $nL$ consisting of $n$ leaves in the characteristic foliation of $A$. Alternatively, if $L$ is in $(\R^3, \ker(dz-y\, dx))$ then one can take the front projection of $L$ and $nL$ has front projection obtained by shifting $L$ slightly in the $z$-direction $n-1$ times. See Figure~\ref{fig:ncopy}. We note that $nL$ is an $(n, n\tb(L))$-cable of $L$. 

\begin{figure}[htb]

{
\begin{overpic}
{fig/ncopy}
\Large
\put(69, 73){$L$}
\end{overpic}}
\caption{The $n$-copy of a Legendrian knot $L$ on the left. To create the $t$-twisted $n$-copy of $L$, replace $t$ regions of the $n$-copy as shown at the top of the right-hand diagram with the region shown on the bottom.} 
\label{fig:ncopy}
\end{figure}

The \dfn{$t$-twisted $n$-copy of $L$}, denoted $T^t(nL)$, is the link consisting of $L$ and $(n-1)$ ruling curves of slope $\tb(L)-t$ on $\partial N$. Alternatively, if $L$ is in $(\R^3, \ker(dz-y\, dx))$, then one can take the front projection of the $n$-copy $nL$ of $L$ and then replace a region in the front projection as shown in Figure~\ref{fig:ncopy}. It is clear that $T^t(nL)$ is an $(n, n(t+\tb(L))$-cable of $L$. The fact that these two descriptions of the $t$-twisted $n$-copy of $L$ are the same was established in \cite[Remark~5.8]{DaltonEtnyreTraynor2024}.

When $\Lambda$ is an $n$-component Legendrian link and we have labeled the components $\Lambda_1\cup\cdots\cup \Lambda_n$, then we denote the result of $\pm$-stabilizing the $i^{th}$ component of $\Lambda$ by $S_{i,\pm}(\Lambda)$. Given the twisted $n$-copy $T^t(nL)$ of $L$, we say the component isotopic to $L$ by the first component and then label the others, cyclically, by $2$ through $n$. (When we say cyclically, we order them as they occur on $\partial N$.) 
\begin{lemma}\label{stabrelation}
We have the following relations between the stabilizations of the twisted $n$-copies:
\[
  S_{1,\pm}(T^t(nL)) = S_{2,\mp}\circ\cdots\circ S_{n,\mp} (T^{t-1}(nS_\pm(L))).
\]
In addition, we have 
\[
  S_{1,\pm}(T^1(nL))=S_{2,\mp}\circ\cdots\circ S_{n,\mp} (nS_\pm(L)).
\]
\end{lemma}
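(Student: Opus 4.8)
The plan is to prove both identities by reducing them to a single local statement in the front projection and then exploiting the fact that stabilizations and twist regions are supported in disjoint local boxes. Throughout I would work with the second (front-projection) description of the twisted $n$-copy, using the identification of the two descriptions recorded in \cite[Remark~5.8]{DaltonEtnyreTraynor2024}. In that picture $T^t(nL)$ is a front of $n$ nearly parallel strands with $t$ inserted twist boxes as in Figure~\ref{fig:ncopy}, each $S_{i,\pm}$ inserts a single zigzag into the $i^{th}$ strand, and $nS_\pm(L)$ is the $n$-copy of the front of $L$ carrying one zigzag on every strand. Since all of these operations live in disjoint boxes along an otherwise untouched front, any claimed Legendrian isotopy between two such decorated fronts reduces to an identity among the local boxes.

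First I would establish the key local lemma, which is exactly the second displayed relation: the $n$-copy of $S_\pm(L)$, drawn as $n$ nested zigzags, is Legendrian isotopic to the front built from the $n$-copy of $L$ by inserting one twist box, one zigzag of sign $\pm$ on the core strand, and one zigzag of the opposite sign $\mp$ on each of the other $n-1$ strands. Concretely I would resolve the nested zigzags one strand at a time, pulling each across its neighbours; each pass is a sequence of Legendrian Reidemeister~II and~III moves that trades a pair of nested cusps for a crossing region together with a displaced zigzag, and iterating over all strands assembles exactly one full twist box while redistributing the zigzags as stated. Matching the cyclic labelling (component~$1$ the core, components $2,\dots,n$ ordered as they occur on $\partial N$) then yields $S_{1,\pm}(T^1(nL))=S_{2,\mp}\circ\cdots\circ S_{n,\mp}(nS_\pm(L))$.

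Next I would deduce the general relation by inserting the extra twisting. Writing $T^t(nL)$ as $T^1(nL)$ with $t-1$ additional twist boxes placed in a region disjoint from all the stabilization boxes, the first identity follows by applying the local lemma inside the single $T^1$-box and carrying the remaining $t-1$ boxes along unchanged; since stabilizing the core first (producing $nS_\pm(L)$) commutes with inserting twists in a disjoint region, the right-hand side becomes $T^{t-1}(nS_\pm(L))$ exactly. Before asserting the isotopy I would run an independent consistency check on classical invariants, computing $\tb$ and $\rot$ of the ruling-curve components on $\partial N$ and their behaviour under stabilization of the core, to confirm that the $\pm/\mp$ pattern and the per-strand cusp counts on the two sides agree.

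The main obstacle is the local lemma itself: carrying out the front isotopy that turns $n$ nested zigzags into one twist box plus redistributed single-strand zigzags, with the correct signs and with control over which strand acquires which sign. The sign bookkeeping is delicate because a single cusp contributes oppositely to the rotation number of the strand carrying it and to the twisting of the whole cable, which is precisely why a core stabilization of sign $\pm$ forces opposite-sign $\mp$ stabilizations on components $2,\dots,n$; getting this sign right, and verifying that the isotopy genuinely respects the component labelling rather than cyclically permuting the strands, is where essentially all of the work lies.
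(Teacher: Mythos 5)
Your overall architecture --- reduce both identities to a single local statement about patterns in a neighborhood of $L$, prove that local statement by front moves, then bootstrap from $t=1$ to general $t$ by carrying the remaining twist boxes along --- is coherent, and the bootstrapping step is sound, since an isotopy of patterns supported in a standard neighborhood induces an isotopy of the corresponding satellites. This is also genuinely different from the paper's proof, which never touches fronts: there, the non-core components of $T^t(nL)$ and of $T^{t-1}(nS_\pm(L))$ are realized as ruling curves of the same slope $\tb(L)-t$ on the boundaries of nested standard neighborhoods $N(S_\pm(L))\subset N(L)$, convex annuli between corresponding ruling curves produce bypasses exhibiting one family of components as stabilizations of the other, the signs are pinned down by rotation numbers, and the second identity additionally uses \cite[Lemma~7.8]{EtnyreMinMukherjee22Pre}. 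The problem is that your argument has a genuine gap exactly at its core: the ``key local lemma'' (your second displayed identity) is never proved. You describe the intended isotopy qualitatively (``resolve the nested zigzags one strand at a time\dots''), but no sequence of moves is exhibited and no sign bookkeeping is done --- indeed you concede this is ``where essentially all of the work lies.'' A proposal whose only substantive step is asserted is not a proof; everything else in your write-up is formal repackaging of that step.

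Moreover, the move repertoire you specify cannot carry out the conversion. Legendrian Reidemeister~II and~III moves preserve the number of cusps on each component of a front; only swallowtail (Legendrian Reidemeister~I) moves create or cancel cusps. So your claimed II/III sequence can exist only if the two fronts have, component by component, equal cusp counts --- something you never check, and which fails in the natural pictures. For $n=2$, the second component on the right-hand side of $S_{1,\pm}(T^1(2L))=S_{2,\mp}(2S_\pm(L))$ carries $c+4$ cusps, where $c$ is the number of cusps of the front of $L$ (two from the zigzag of $S_\pm(L)$ and two from the extra $\mp$-zigzag), while on the left-hand side it carries $c$ plus the cusps of the twist region of Figure~\ref{fig:ncopy}, which is two in that picture (consistent with $\tb=\tb(L)-2$ and $\rot=\rot(L)$ for this component, realized by one up cusp, one down cusp, and a negative self-crossing). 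Hence swallowtail moves are unavoidable, and the ``trade cusps for crossings'' operation you invoke is precisely such a move, not a II/III move. None of this makes the strategy hopeless --- the local identity is true and could be verified by a careful front computation with swallowtails included --- but as written your plan both omits its main step and misidentifies the moves needed to perform it; the paper's convex-surface argument is the route that makes the sign bookkeeping automatic.
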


We are now ready to state the classification of Legendrian links in integer-sloped lesser-sloped cables of uniformly thick knots. 

\begin{theorem}\label{maininteger}
  Let $K$ be a uniform thick knot type and $q\leq \tbb(K)$ an integer. Let $L_1,\ldots, L_k$ be the distinct Legendrian representatives of $K$ with $\tb\geq q$. Then the non-destabilized Legendrian representatives of $K_{n(1,q)}$ are 
  \[
    \{ T^t(nL_i) : t=\tb(L_i)-q \text{ and } i=1, \ldots, k\}.
  \]
  Any other Legendrian representative of $K_{n(1,q)}$ destabilizes to one of these. 

  Let $\Lambda^1$ and $\Lambda^2$ be two Legendrian representatives of $K_{n(1,q)}$ that are not stabilizations of $n$-copies of a Legendrian $L_i$ with $\tb=q$. Let $\Lambda_1^i$ be the component of $\Lambda^i$ with the largest Thurston-Bennequin invariant (if there is more than one such component, then choose one), then $\Lambda^1$ and $\Lambda^2$ are Legendrian isotopic if and only if $\Lambda^1_1$ and $\Lambda^2_1$ are Legendrian isotopic and the other components of $\Lambda^1$ and $\Lambda^2$ can be paired to have the same Thurston-Bennequin invariant and rotation number. 

  Let $\Lambda^1$ and $\Lambda^2$ be two Legendrian representatives of $K_{n(1,q)}$ such that $\Lambda^1$ is obtained from a $n$-copy of $L_i$ with $\tb(L_i)=q$ by stabilizing at least one of the components positively some number of times (but not negatively) and at least one of the components negatively some number of times (but not positively) and similarly $\Lambda^2$ is obtained from an $n$-copy of $L_j$ with $\tb(L_j)=q$ by such stabilizations. If $i\not=j$, then $\Lambda^1$ and $\Lambda^2$ are not Legendrian isotopic. 

  Let $\Lambda^1$ and $\Lambda^2$ be two Legendrian representatives of $K_{n(1,q)}$ such that $\Lambda^1$ is obtained from a $n$-copy of $L_i$ with $\tb(L_i)=q$ by stabilizing all the components positively some number of times (but not negatively) and $\Lambda^2$ is obtained by the same stabilizations of an $n$-copy of $L_j$ with $\tb(L_j)=q$. Then $\Lambda^1$ and $\Lambda^2$ are Legendrian isotopic if and only if the components with the largest Thurston-Bennequin invariant are Legendrian isotopic. The same statement is true when all components are stabilized negatively. 
\end{theorem}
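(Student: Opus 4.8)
The plan is to prove both implications, with the forward (``only if'') direction being essentially formal and the backward (``if'') direction carrying the content. Throughout, write the common stabilization pattern as $(a_1,\dots,a_n)$, so that $\Lambda^1$ is obtained from the $n$-copy $nL_i$ by stabilizing the $r$-th component positively $a_r$ times, $\Lambda^2$ is obtained from $nL_j$ by the same pattern, and set $m=\min_r a_r$. Since every component of the plain $n$-copy is Legendrian isotopic to its core, the components of $\Lambda^1$ are Legendrian isotopic to $S_+^{a_r}(L_i)$ and those of $\Lambda^2$ to $S_+^{a_r}(L_j)$; in particular the components of largest Thurston-Bennequin invariant are $S_+^{m}(L_i)$ and $S_+^{m}(L_j)$, each with $\tb=q-m$.

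For the forward direction, a Legendrian isotopy of links restricts to a Legendrian isotopy on each component after some permutation, and that permutation must preserve $\tb$ and $\rot$ of components. Hence a component of $\Lambda^1$ realizing the maximal $\tb$ is carried to a component of $\Lambda^2$ realizing the maximal $\tb$, and the two are Legendrian isotopic; this says exactly $S_+^m(L_i)\simeq S_+^m(L_j)$.

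The backward direction is where I would do the real work, via a reduction to a common $n$-copy. First I would establish the elementary but crucial fact that stabilizing all $n$ components of an $n$-copy $nL$ positively the same number $m$ of times produces the $n$-copy $n(S_+^m(L))$: in the front projection the $n$ strands run parallel, and inserting the same positive zig-zag into all of them simultaneously keeps them parallel and replaces the core $L$ by $S_+(L)$, so iterating $m$ times gives $nS_+^m(L)$. Applying this to $\Lambda^1$, and using that single-component stabilizations on distinct components commute, I can present $\Lambda^1$ as the result of stabilizing the $r$-th component of $n(S_+^m(L_i))$ positively $a_r-m$ times, and likewise $\Lambda^2$ from $n(S_+^m(L_j))$. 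Next I would invoke the invariance of the $n$-copy under Legendrian isotopy of the core: since $S_+^m(L_i)\simeq S_+^m(L_j)$, writing $L'$ for their common Legendrian isotopy class, the Legendrian isotopy extension theorem upgrades this isotopy to an ambient contact isotopy carrying a standard neighborhood (and its copy-annulus) of one core to the other, so $n(S_+^m(L_i))\simeq nL'\simeq n(S_+^m(L_j))$ as Legendrian links. Both $\Lambda^1$ and $\Lambda^2$ are then obtained from the single Legendrian link $nL'$ by the identical stabilization pattern $(a_r-m)_r$, and well-definedness of stabilization gives $\Lambda^1\simeq\Lambda^2$. The all-negative case is identical after replacing $S_+$ by $S_-$ throughout.

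The main obstacle I anticipate is not the formal bookkeeping but the two geometric well-definedness statements about $n$-copies: that uniform positive stabilization of an $n$-copy is again an $n$-copy (of the stabilized core), and that the $n$-copy depends only on the Legendrian isotopy class of its core. Both are intuitively clear and can be proved either in the front projection or through the standard neighborhood and isotopy extension, but some care is needed to check that the copy-annulus and its characteristic foliation are transported correctly, and that the reduction to the common $n$-copy $nL'$ does not secretly depend on which of $L_i$, $L_j$ we started from beyond the isotopy class of the top component. This is precisely what makes the all-positive (equivalently all-negative) hypothesis essential, in contrast to the mixed case treated in the previous statement, where the differing twisting data obstruct such a reduction.
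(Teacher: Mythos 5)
There are two problems. First, your proposal addresses only the last of the four assertions in the theorem: the destabilization statement, the classification of representatives that are not stabilizations of $n$-copies, and the non-isotopy of mixed-sign stabilizations of $nL_i$ and $nL_j$ for $i\neq j$ (which is where the paper does its hardest work, via a discretization-of-isotopy argument) are never addressed. Second, and fatally for the part you do treat, the fact you call ``elementary but crucial'' is false: stabilizing each component of the $n$-copy $nL$ positively does \emph{not} produce $n(S_+(L))$. A component stabilization is performed by a small zig-zag in a ball around a point of that component, disjoint from the other components; smoothly it is an isotopy of the link, so it preserves all pairwise linking numbers, which for $nL$ equal $\tb(L)$. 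Hence $S_{1,+}\circ\cdots\circ S_{n,+}(nL)$ is still smoothly the $(n,n\tb(L))$-cable. By contrast, $n(S_+(L))$ has pairwise linking numbers $\tb(S_+(L))=\tb(L)-1$ and is the $(n,n(\tb(L)-1))$-cable, a different smooth link type. Your front-projection picture conflates the two operations: inserting \emph{nested} zig-zags into all $n$ parallel strands does produce the front of $n(S_+(L))$, but each new cusp also creates a new negative crossing between adjacent strands, so this move changes the linking numbers and is not the composition of $n$ single-component stabilizations. (Compare Lemma~\ref{stabrelation}: $nS_\pm(L)$ is related to the \emph{twisted} copy $T^1(nL)$, not to $nL$.) Consequently $\Lambda^1$ is not even smoothly isotopic to a stabilization of $n(S_+^m(L_i))$, and the entire reduction to a common $n$-copy $nL'$ collapses.

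Your underlying strategy---reduce both links to a common model link determined by the Legendrian isotopy class of the top component---is sound, but the model must lie in the correct smooth type $K_{n(1,q)}$. This is what the paper does: the $n$-copy $nL_i$ sits on $\partial N$, $N$ a standard neighborhood of $L_i$, as Legendrian divides, and the stabilizations of its components can be realized on $\partial N$ by letting the curves intersect the dividing set inefficiently. Inside $N$ take a standard neighborhood $N'$ of $L'=S_+^m(L_i)$ and let $R$ be the link of $n$ ruling curves of slope $q$ on $\partial N'$; since the integer $q$ exceeds the dividing slope $\tb(L')$, each ruling curve is Legendrian isotopic to $L'$, while $R$ is still an $(n,nq)$-cable. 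Annuli between $\partial N'$ and $\partial N$ exhibit $\Lambda^1$ as a stabilization of $R$, and $R$ depends only on the Legendrian isotopy class of $L'$; the same holds for $\Lambda^2$ with $L''=S_+^m(L_j)$, and then well-definedness of stabilization yields the ``if'' direction. Your forward direction is fine as stated.
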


\begin{remark}
  We note that the last two results allow us to enumerate all distinct elements in $\mathcal{L}(K_{n(1,q)})$, except possibly those that are obtained from an $n$-copy by stabilizing all the components positively and negatively, but this can usually be understood by the proof of the above theorem. We see below that we can use this to obtain a complete classification of elements in $\mathcal{L}(K_{n(1,q)})$  for some knot types $K$. 
\end{remark}

To give an application of the previous theorem, we consider cable links of twist knots $K_{n}$. We first recall that the classification of Legendrian representatives of negative, even twist knots $\tbb(K_{-2n})=1$ is given in Figure~\ref{fig:secex}, see \cite{EtnyreNgVertesi13}. 

\begin{figure}[htb]{
\begin{overpic}
{fig/secondex}
\small
\put(83, 79){$\left\lceil \frac{n^2}{2}\right\rceil$}
\put(102, 58){$\lceil \frac n 2\rceil$}
\put(122, 40){$\lceil \frac n 2\rceil$}
\put(142, 19){$\lceil \frac n 2\rceil$}
\put(37, 58){$\lceil \frac n 2\rceil$}
\put(16, 40){$\lceil \frac n 2\rceil$}
\put(-6, 19){$\lceil \frac n 2\rceil$}
\put(74, 44){$1$}
\put(54, 23){$1$}
\put(95, 23){$1$}
\end{overpic}}
\caption{The mountain range for $K_{-2n}$. The peak of the mountain range is $(0,1)$.} 
\label{fig:secex}
\end{figure}
We denote the elements of $\mathcal{L}(K_{-2n})$ by 
\begin{align*}
L_{(r,t)} \quad &\text{ for $t\leq 0$ and $r=t+1, t+3, \ldots, -t-1$},\\
L_{(t-1,t)}^i, L_{(-t+1,t)}^i \quad &\text{ for $t\leq 0$ and $i=1,\ldots, k=\lceil n/2\rceil$, and}\\
L_{(0,1)}^i \quad &\text{for $i=1,\ldots, l=\lceil n^2/2\rceil$,}
\end{align*}
where $\tb(L_{(r,t)})=t, \rot(L_{(t,r)})=r$ and the $L_{(r,t)}$ are determined by their classical invariants, $\tb(L_{(\pm(t-1)),t}^i)=t,\rot(L_{(\pm(t-1),t)}^i)=\pm(t-1)$, and the $L_{(\pm(t-1),t)}^i$ are distinct for different $i$, and $\tb(L_{(0,1)}^i)=1, \rot(L_{(0,1)}^i)=0$, and the $L_{(0,1)}^i$ are distinct for different $i$. Moreover, $S_\mp(L_{(\pm(t-1),t)}^i)=L^i_{(\pm(t-2),t-1)}$, $S_+(L^i_{(t-1,t)})=L_{(t, t-1)}$, $S_-(L^i_{(1-t,t)})=L_{(-t,t-1)}$, and the set $\{S_+(L_{(0,1)}^i)\}_{i=1}^l$ agrees with the set $\{L_{(1,0)}^i\}_{i=1}^k$ and similarly for negative stabilizations of the $L_{(0,1)}^i$. 
 
\begin{theorem}\label{integerex}
  Consider the twist knot $K_{-2n}$ and any integer $q\leq 1$. Then, using the notation above, the non-destabilizable representatives of $\mathcal{L}((K_{-2n})_{m(1,q)})$ are
  \begin{align*}
    m L_{(r,q)}, &mL_{(\pm(q-1),q)}^i, T^1 (mL_{(r, q+1)}), T^1(m(L_{(\pm(q),q+1)}^i), \ldots,\\ 
    & T^{q-1}(mL_{0,-1}), T^{q-1}(mL_{(\pm 2, -1)}), T^{q}(mL^i_{(\pm 1, 0)}), T^{q+1}(mL_{(0,1)}^i)
  \end{align*}
  and all other representatives destabilize to one of these. 

  An element of $\mathcal{L}((K_{-2n})_{m(1,q)})$ is determined by the classical invariants of its components if the rotation number of each component has magnitude less than its Thurston-Bennequin invariant. If $q\not=1$ then all other representatives are determined by the Legendrian isotopy class of the component with the largest Thurston-Bennequin invariant. When $q=1$, then stabilizations of $m$-copies of distinct $L_{(0,1)}^i$ remain distinct until all components have been stabilized both positively and negatively, at which point they are determined by their classical invariants. 
\end{theorem}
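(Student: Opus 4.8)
The plan is to obtain Theorem~\ref{integerex} as a direct specialization of Theorem~\ref{maininteger} to $K=K_{-2n}$, followed by a translation into the explicit notation attached to Figure~\ref{fig:secex}. First I would record that $K_{-2n}$ is uniformly thick with $\tbb(K_{-2n})=1$ (Theorem~\ref{thm:uniform}), so Theorem~\ref{maininteger} applies for every integer $q\le 1$. That theorem identifies the non-destabilizable representatives of $(K_{-2n})_{m(1,q)}$ as the twisted $m$-copies $T^{\tb(L)-q}(mL)$, one for each Legendrian representative $L$ of $K_{-2n}$ with $\tb(L)\ge q$, and guarantees everything else destabilizes to one of these. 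Running $L$ over the three families visible in the mountain range---the simple interior classes $L_{(r,t)}$, the edge classes $L^i_{(\pm(t-1),t)}$, and the peak classes $L^i_{(0,1)}$, all with $t\ge q$---and forming the corresponding twisted $m$-copies reproduces the enumeration in the statement. This step is pure bookkeeping.

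For the isotopy statements I would split according to whether the underlying classes of the components are Legendrian simple. The hypothesis that each component has rotation number of magnitude smaller than its Thurston--Bennequin invariant confines each underlying class to the simple interior $\{L_{(r,t)}\}$; there no component sits over a multiplicity-bearing class, so the second paragraph of Theorem~\ref{maininteger} reduces the question to matching the classical invariants of the components, which is the first isotopy assertion. When $q\ne 1$ the only multiplicities among classes with $\tb=q$ lie on the edges $L^i_{(\pm(q-1),q)}$, and along an edge the index $i$ is destroyed by the single ``inward'' stabilization ($S_+(L^i_{(t-1,t)})=L_{(t,t-1)}$ is simple) while it survives under the ``outward'' one. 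Consequently the edge index is always carried by the component of largest Thurston--Bennequin invariant, and the second through fourth paragraphs of Theorem~\ref{maininteger} give that such links are determined by the Legendrian class of that component---the second isotopy assertion.

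The substantive case is $q=1$, where the peak $L^i_{(0,1)}$ carries multiplicity $\lceil n^2/2\rceil$ and supplies the only non-destabilizable $m$-copies. Here I would track the index $i$ using $S_+(L^i_{(0,1)})=L^j_{(1,0)}$ and $S_-(L^i_{(0,1)})=L^j_{(-1,0)}$, which remain on the non-simple edges, together with $S_+S_-(L^i_{(0,1)})=L_{(0,-1)}$, which lands in the simple interior independently of $i$. Thus a component forgets its peak index exactly when it is stabilized in both directions, so as long as some component has been stabilized in at most one direction the link still detects $i$; the third and fourth paragraphs of Theorem~\ref{maininteger} cover precisely the one-direction-per-component links and confirm this.

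The hard part, and the one point Theorem~\ref{maininteger} explicitly leaves open (see the Remark following it), is the threshold case of a link obtained from an $m$-copy of $L^i_{(0,1)}$ by stabilizing every component both positively and negatively. For this I would use Lemma~\ref{stabrelation} to trade the positive and negative stabilizations on individual components against one another, rewriting the link as a stabilization of a twisted $m$-copy based at the simple interior class $L_{(0,-1)}$ while lowering the twisting parameter. Once the base class is simple and no longer sits at $\tb=q=1$, the second paragraph of Theorem~\ref{maininteger} applies and shows the link is determined by the classical invariants of its components; this is the moment the $\lceil n^2/2\rceil$ peak families collapse. The delicate part of this argument is that the $m$ components may have been stabilized by different amounts, so the repeated application of Lemma~\ref{stabrelation} must be organized carefully to land every such link over the common simple base $L_{(0,-1)}$.
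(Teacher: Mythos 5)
Your first three paragraphs follow the paper's route: Theorem~\ref{integerex} is deduced by specializing Theorem~\ref{maininteger} to $K_{-2n}$ (uniformly thick with $\tbb=1$ by Theorem~\ref{thm:uniform}) and translating into the notation attached to Figure~\ref{fig:secex}. The paper's proof says exactly this, and your bookkeeping for the enumeration and for the cases covered by the second through fourth paragraphs of Theorem~\ref{maininteger} is sound.

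The gap is in your treatment of the one case the paper singles out as not following formally from Theorem~\ref{maininteger}: $q=1$ and a stabilized $m$-copy of $L^i_{(0,1)}$ in which every component has been stabilized both positively and negatively. You propose to handle it by repeatedly applying Lemma~\ref{stabrelation} to ``trade'' stabilizations and rewrite the link over the base $L_{(0,-1)}$. But Lemma~\ref{stabrelation} only relates $S_{1,\pm}(T^t(nL))$ to stabilizations of $T^{t-1}(nS_\pm(L))$ for $t\geq 1$: read left to right it lowers the twisting parameter while stabilizing the base, and read right to left it requires the base knot to destabilize. Your link is a stabilization of the plain $m$-copy $T^0(mL^i_{(0,1)})$, whose base $L^i_{(0,1)}$ has maximal Thurston--Bennequin invariant, so neither direction of the lemma applies; moreover, the configuration you want to land on (slope-$1$ curves on the boundary of a neighborhood of $L_{(0,-1)}$, which has $\tb=-1$) would correspond to twisting parameter $-2$, outside the scope of the $T^t$ notation and of the lemma altogether. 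Indeed, if such algebraic trading were available for the plain $m$-copy it would prove too much: since the set $\{S_+(L^i_{(0,1)})\}_{i=1}^l$ has only $k<l$ distinct members, trading at $t=0$ would also collapse links whose components are each stabilized with only one sign, contradicting the third paragraph of Theorem~\ref{maininteger}, which you yourself invoke to keep the index $i$ alive. The paper instead disposes of this case geometrically, by the argument at the end of the proof of Theorem~\ref{maininteger}: because every component has been stabilized at least once with each sign, the link is a stabilization of the link of $m$ slope-$q$ ruling curves on the boundary of a standard neighborhood of $S_+\circ S_-(L^i_{(0,1)})=L_{(0,-1)}$; those ruling curves are Legendrian isotopic to the core since $q=1$ exceeds the dividing slope $-1$, so this auxiliary link is canonically attached to $L_{(0,-1)}$, which is independent of $i$, and well-definedness of stabilization finishes the argument. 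Replacing your fourth paragraph with this convex-surface argument would complete the proof.
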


\begin{proof}[Second proof of Theorem~\ref{newpenom}]
  When $n=4$ (or larger) one can see from \cite{EtnyreNgVertesi13} that there is an $i$ and $j$ such that $L^i_{(0,1)}$ and $L^j_{(0,1)}$ become isotopic after a single positive stabilization and also after a single negative stabilization. But according to the previous theorem $S_{1,+}\circ S_{2,-}(2L^i_{(0,1)})$ and $S_{1,+}\circ S_{2,-}(2L^j_{(0,1)})$ are not Legendrian isotopic even though they are component-wise isotopic. 
\end{proof}

We now turn to the possible permutations of the components of integer lesser-sloped cables that are realizable by a Legendrian isotopy. 

\begin{theorem}[Ordered Classification]\label{integerlesserslopeordered}
  Let $K$ be a uniform thick knot type and $q\leq \tbb(K)$ an integer. Given an element $\Lambda\in \mathcal{L}(K_{n(1,q)})$ then:
  \begin{enumerate}
    \item Any permutation of the components of $\Lambda$ are can be realized by a Legendrian isotopy unless $\Lambda= nL$ or $nL$ with some, but not all, components stabilized.
    \item If $\Lambda=nL$, possibly with some components stabilized, and $q=\tbb(K)$, then no permutations of the maximal Thurston-Bennequin invariant components can be realized by a Legendrian isotopy, but non-maximal Thurston-Bennequin invariant components can be permuted if they have the same classical invariants.
    \item If $\Lambda=nL$, possibly with some components stabilized, and $q<\tbb(K)$, then $\Lambda$ sits on a convex torus bounding a solid torus in the knot type of $K$ and the components of $\Lambda$ have a cyclic ordering given by this torus. Only cyclic permutations of the components of $\Lambda$ with maximal Thurston-Bennequin invariant can be realized by Legendrian isotopy, but non-maximal Thurston-Bennequin invariant components can be permuted if they have the same classical invariants.
  \end{enumerate}
\end{theorem}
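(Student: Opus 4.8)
The plan is to prove each clause by separating the \emph{constructive} task of realizing an allowed permutation by an explicit Legendrian isotopy from the \emph{obstructive} task of showing the remaining permutations are impossible. Throughout it suffices to analyze two elementary moves, since they generate everything in sight: interchanging two cyclically adjacent components, and rotating the whole cyclic family of components by one step. The adjacent interchanges produce the full symmetric group needed for Item~(1) and for the ``same classical invariants'' clauses of Items~(2) and~(3), while the one-step rotations produce the cyclic permutations of Item~(3). Any realizable permutation must of course preserve the multiset of classical invariants of the components, so we only ever attempt to permute components that are pairwise Legendrian isotopic.

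For the constructive direction I would use the structural description from Theorem~\ref{maininteger} together with the stabilization identities of Lemma~\ref{stabrelation}. Whenever $\Lambda$ lies on a genuine convex torus---which happens precisely when it is a twisted $n$-copy with nonzero twist, or when the cabling slope $q$ is strictly less than $\tbb(K)$, so that uniform thickness lets us fatten the defining solid torus and realize the components as ruling curves on a convex boundary of dividing slope greater than $q$---a rotation of that torus about the core realizes a one-step cyclic permutation. To realize an adjacent interchange I would exploit the extra room coming either from a nonzero twist or from a stabilization: a stabilized component carries a zig-zag, and a local Legendrian isotopy through the bigon that the zig-zag bounds slides that component past its neighbor with no crossing change. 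This is the same mechanism used for greater-sloped cables in Theorem~\ref{greaterordered} and in Lemma~7.11 of \cite{DaltonEtnyreTraynor2024}; iterating it yields all of Item~(1) and the permutability of the non-maximal components in Items~(2) and~(3).

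The heart of the theorem is the obstructive direction, and here the uniform thickness hypothesis is doing the real work. The plan is to produce a canonical convex surface carrying the maximal Thurston--Bennequin components of $\Lambda$: a convex annulus inside the standard neighborhood of a maximal-$\tb$ representative of $K$ when $q=\tbb(K)$, and a convex torus bounding a solid torus in the knot type $K$ when $q<\tbb(K)$. Uniform thickness forces this surface to be essentially unique, because the maximal-$\tb$ components are the thickest cables present and the solid torus they cut out must sit inside a standard neighborhood of a maximal-$\tb$ representative of $K$; convex surface theory then pins down the isotopy type of its boundary. I would then argue that any Legendrian isotopy of $\Lambda$ may be taken to preserve this surface, after which the induced permutation of the parallel curves it carries is forced into the trivial group for an annulus (where the curves are linearly ordered and cannot cross) and into the cyclic group for a torus (where an embedded parallel family admits only rotations). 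This yields the ``no permutations'' conclusion of Item~(2) and the ``only cyclic permutations'' conclusion of Item~(3).

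The main obstacle I anticipate is exactly the step of arranging the ambient isotopy to preserve the canonical annulus or torus. This is genuinely contact-geometric rather than formal: one must rule out an isotopy that momentarily thickens the configuration, pushes a maximal-$\tb$ component off the surface, carries it around the core, and reinserts it so as to realize a forbidden non-cyclic transposition. I would control this by discretizing the isotopy into a sequence of bypass attachments along the surface and showing, using uniform thickness to constrain the available bypasses, that none of them can alter the cyclic order of the maximal-$\tb$ components while fixing their classical invariants. Extracting a well-defined cyclic-order invariant of the link from this analysis and certifying its invariance under all admissible moves is where I expect the bulk of the work to lie.
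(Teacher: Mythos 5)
A preliminary remark: the paper never actually writes down a proof of this theorem (it is stated in the introduction, invoked in the proof of Theorem~\ref{maininteger}, and implicitly deferred to the arguments of Section~7 of \cite{DaltonEtnyreTraynor2024}), so your proposal has to be judged on its own. Judged that way, its central mechanism fails. First, a small but telling error: when $q<\tbb(K)$ you propose to realize the components of $nL$ as ``ruling curves on a convex boundary of dividing slope greater than $q$.'' This is impossible on invariants: each component of $nL$ has $\tb=q$, while a ruling curve of slope $q$ on a convex torus of dividing slope $s\neq q$ meets the dividing set at least $2|s-q|$ times and therefore has $\tb=q-|s-q|<q$. The components can only be Legendrian \emph{divides} on tori whose dividing slope is exactly $q$. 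Second, and fatally, a ``rotation of that torus about the core'' is not a contact isotopy when the torus is convex: the rotation field is not a contact vector field for the $I$-invariant structure near a convex torus (writing $\xi=\ker(\beta+u\,dt)$, one would need the derivative of $u$ in the rotation direction to vanish along $\Gamma$, and it cannot, since $u$ vanishes transversally on $\Gamma$ and the rotation is transverse to $\Gamma$); the discrete rotations that do preserve $\xi$ near the torus are merely contactomorphisms, not visibly contact-isotopic to the identity.

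If rigid rotation of a convex torus were an admissible move, your argument would prove too much and item~(2) would be false. Indeed, even when $q=\tbb(K)$ one can fold $\partial N(L)$ (as in \cite{EtnyreHonda05}) to obtain a convex torus with $2n$ dividing curves of slope $q$ bounding a solid torus in the knot type $K$; its $n$ Legendrian divides of one sign form a representative of $K_{n(1,q)}$ all of whose components have the maximal possible $\tb=q$, hence a non-destabilizable link, hence by Theorem~\ref{maininteger} an $n$-copy $nL_i$; ``rotating'' this torus would then cyclically permute the components of an $n$-copy at the maximal slope, which is exactly what item~(2) forbids. The true content of the dichotomy between items~(2) and~(3), which your proposal never isolates, is the existence of \emph{rotative} tori: when $q<\tbb(K)=w(K)$ the $n$-copy can be placed as $n$ leaves of the linear characteristic foliation of a pre-Lagrangian torus of slope $q$, a neighborhood of which carries a $T^2$-invariant contact structure; there the translation field genuinely is a contact vector field, and cutting off its contact Hamiltonian gives a compactly supported ambient contact isotopy realizing the cyclic permutation. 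Such a torus cannot exist when $q=\tbb(K)$, since its neighborhood would contain convex tori of dividing slope greater than $q$, contradicting $w(K)=\tbb(K)$. Finally, the obstructive half of the theorem (no non-cyclic permutations in case~(3), no permutations at all in case~(2)) is where essentially all the work lies; you defer it to a discretization-and-bypass analysis you do not carry out, and your framing of it --- the components lie on a canonical annulus and ``cannot cross'' --- starts from a false premise, since as just noted the $n$-copy lies on convex tori even at the maximal slope, so the linear order you want to protect is not visible from any single surface and must instead be extracted from the contact geometry of the complement.
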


\subsection{Non-integer lesser-sloped cables}
We begin this section with some standard constructions of lesser-sloped cables. Given a Legendrian knot $L$, we can consider a standard neighborhood $N$ of $L$. Inside of $N$ we can consider the stabilization $S_\pm(L)$ of $L$, and we let $N_\pm$ be a standard neighborhood of $S_\pm(L)$ in $N$. For any $q/p\in (\tb(L)-1,\tb(L))$ we can find a unique, up to contact isotopy, torus $T_\pm$ in $N\setminus N_\pm$ isotopic to $\partial N$ whose characteristic foliation is linear of slope $q/p$. We define the \dfn{$\pm$ standard $(np,nq)$-cable of $L$} to be $n$ leaves in the characteristic foliation of $T_\pm$ and we denote this link by $L^\pm_{n(p,q)}$. Alternatively, if $L$ is in $(\R^3, \ker(dz-y\, dx))$ then one can take the front projection of $L$ and then $L^\pm_{n(p,q)}$ has front projection given by the $np$ copy of $L$ after replacing a region as in the upper right of Figure~\ref{fig:ncopy} by $n(\tb(L)p-q)$ copies of the diagram on the left or the right of Figure~\ref{fig:negcable}.

\begin{figure}[htb]

{
\begin{overpic}
{fig/negcable}
\Large
\end{overpic}}
\caption{Legendrian tangles used to define standard Legendrian lesser-sloped cables of a Legendrian knot.} 
\label{fig:negcable}
\end{figure}

We begin by proving a theorem about lesser-sloped cable knots. The paper \cite{ChakrabortyEtnyreMin2024} describes such cables, but the theorem there requires quite a bit of knowledge about solid tori in the knot type that is not usually readily available. The theorem presented here will allow us to understand most cables of twist knots (among others).

\begin{theorem}\label{lesserknots}
  Let $K$ be a uniformly thick knot type and $p$ and $q$ relatively prime with $q/p<\tbb(K)$. Let $L_1, \ldots, L_k$ be the distinct Legendrian knots in $\mathcal{L}(K)$ with $\tb=\lceil q/p\rceil$. Every element in $\mathcal{L}(K_{(p,q)})$ destabilizes to $(L_i)^\pm_{(p,q)}$ for some $i$ and $\pm$.

  We distinguish the $(L_i)^\pm_{(p,q)}$ as follows.
  \begin{enumerate}
    \item If $\rot(L_i)\not= \rot(L_j)$, then $(L_i)^\pm_{(p,q)}$ and $(L_j)^\pm_{(p,q)}$ are distinct. 
    \item If $\rot(L_i)=\rot(L_j)$ but $S_\pm(L_i)\not=S_\pm(L_j)$, then $(L_i)^\pm_{(p,q)}$ and $(L_j)^\pm_{(p,q)}$ are distinct. 
    \item If $\rot(L_i)=\rot(L_j)$ but Legendrian surgery in $L_i$ and $L_j$ yield distinct contact manifolds, then $(L_i)^\pm_{(p,q)}$ and $(L_j)^\pm_{(p,q)}$ are distinct. 
    \item Otherwise, it is not clear if $(L_i)^\pm_{(p,q)}$ and $(L_j)^\pm_{(p,q)}$ are distinct. 
  \end{enumerate} 

  We  distinguish stabilizations of the $(L_i)^\pm_{(p,q)}$ as follows.
  \begin{enumerate}
    \item Stabilizations of $(L_i)^+_{(p,q)}$ and $(L_i)^-_{(p,q)}$ remain distinct until the first knot has been  $-$-stabilized $|p\tb(L)-q|$ or more times, the second knot has been $+$-stabilized $|p\tb(L)-q|$ or more times, and the classical invariants match. In this case they are Legendrian isotopic.
    \item If $S_+(L_i)=S_-(L_j)$, then $(L_i)^+_{(p,q)}$ and $(L_j)^-_{(p,q)}$ will have distinct rotation numbers until the first has been $+$-stabilized $|p(\tb(L)-1)-q|$ times and the second has been $-$-stabilized $|p(\tb(L)-1)-q|$ times. After each has been stabilized as indicated, they become Legendrian isotopic. 
    \item If $S_\pm(L_i)=S_\pm(L_j)$, but Legendrian surgery on $L_i$ and $L_j$ yield distinct contact manifolds, then stabilizations of $(L_i)^\pm_{(p,q)}$ and $(L_j)^\pm_{(p,q)}$ remain distinct until each has been $\pm$-stabilized $|p(\tb(L)-1)-q|$ or more times, and the classical invariants are the same. In this case they are Legendrian isotopic.
    \item If $S^l_\pm(L_i)$ is not Legendrian isotopic to $S^l_\pm(L_j)$ for any $l$, then any number of $\pm$-stabilizations of $(L_i)^\pm_{(p,q)}$ and $(L_j)^\pm_{(p,q)}$ remain distinct.
  \end{enumerate} 
\end{theorem}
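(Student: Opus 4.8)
The plan is to reduce the whole statement to the structure of convex solid tori in the knot type $K$, exactly as in the framework of \cite{ChakrabortyEtnyreMin2024}, and then to replace the hard-to-access ``contact structures on solid tori'' with Legendrian surgery data, which is the new ingredient. First I would prove the destabilization claim. Given $\Lambda \in \mathcal{L}(K_{(p,q)})$, it is Legendrian isotopic to a union of leaf curves on a convex torus $T$ bounding a solid torus $S$ in the knot type $K$. Since $q/p < \tbb(K)$ and $K$ is uniformly thick, $S$ thickens into a standard neighborhood of a maximal $\tb$ representative; convex surface theory then produces inside it a convex torus of the integer slope $\lceil q/p\rceil$ bounding a standard neighborhood $N_i$ of a Legendrian $L_i$ with $\tb(L_i)=\lceil q/p\rceil$. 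Peeling off the thickened torus between slope $q/p$ and slope $\lceil q/p\rceil$ and reading off the sign of its basic-slice decomposition shows that, after destabilizing, $\Lambda$ sits on the torus $T_\pm$ used to define $(L_i)^\pm_{(p,q)}$, with the sign recording the sign of that basic slice. This simultaneously identifies the non-destabilizable models.

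Next I would distinguish the standard models. Case (1) is a computation of classical invariants: since every $L_i$ has the same $\tb=\lceil q/p\rceil$, the rotation number of $(L_i)^\pm_{(p,q)}$ is $q\,\rot(L_i)$ plus a constant depending only on $p,q,\pm$, so (as $q\neq 0$ in this non-integer regime) $\rot(L_i)\neq\rot(L_j)$ forces non-isotopic cables. Case (2) uses reconstruction: from $\Lambda=(L_i)^\pm_{(p,q)}$ one recovers the inner solid torus $V$ with $\partial V=T_\pm$, whose core is Legendrian isotopic to $S_\pm(L_i)$, so if $S_\pm(L_i)\neq S_\pm(L_j)$ the recovered cores differ and the cables cannot be isotopic. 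The heart is Case (3), where $S_\pm(L_i)$ and $S_\pm(L_j)$ may agree yet $L_i,L_j$ are separated by Legendrian surgery. Here I exploit that $\Lambda$ lies inside the \emph{outer} neighborhood $N_i$ (the slope-$\lceil q/p\rceil$ solid torus with core $L_i$), not just inside $V$. A Legendrian isotopy of cables, by uniform thickness, carries $N_i$ to a solid torus contact-isotopic to $N_j$ rel the cable, hence induces a contactomorphism of complements $S^3\setminus N_i \to S^3\setminus N_j$ respecting the boundary framing. Since contact $(-1)$-surgery on $L_i$ is gotten by gluing a fixed model solid torus to $S^3\setminus N_i$, such a contactomorphism would identify the two Legendrian surgeries; contrapositively, distinct surgeries force distinct cables. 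Case (4) simply records that outside these regimes no claim is made.

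The step I expect to be the main obstacle is precisely the canonicity used in Case (3): upgrading a Legendrian isotopy of cables to an ambient contact isotopy that carries the canonical solid torus $N_i$ to $N_j$. The isotopy extension theorem only moves a neighborhood of $\Lambda$, whereas $N_i$ is a neighborhood of the \emph{core}; controlling it requires uniform thickness to guarantee that the solid torus on which the destabilized cable sits is unique up to isotopy, ruling out exotic thickenings that could scramble the recovered core, and it reuses the basic-slice bookkeeping from the destabilization argument. This is the one place where genuinely new work beyond \cite{ChakrabortyEtnyreMin2024} is needed, since it is what lets surgery data substitute for a direct classification of contact structures on solid tori.

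Finally, for the four stabilization statements I would run the same three invariants after tracking how stabilizing the cable interacts with the basic-slice decomposition of the thickened torus $N_i\setminus N_\pm$: a stabilization either appends a basic slice of definite sign to this region or consumes part of it, and the thresholds $|p\tb(L)-q|$ and $|p(\tb(L)-1)-q|$ count exactly the number of basic slices available before the decomposition degenerates and the two sides can be shuffled into agreement. Within each threshold the rotation-number computation handles the $\pm$-versus-$\mp$ comparison of (2), the recovered stabilized core handles (1) and (3), and the Legendrian-surgery invariant handles (3), so distinctness persists; once the threshold is crossed, an explicit isotopy built from stabilization relations analogous to those of the integer case, together with the collapse of the basic-slice region, produces the asserted coincidences, while (4) records the case where the stabilized cores never agree and so the cables never do.
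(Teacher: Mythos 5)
Your skeleton matches the paper's: destabilization to the standard models $(L_i)^\pm_{(p,q)}$, the three distinguishing mechanisms (rotation number, stabilized core, Legendrian surgery on the complement), and the use of annuli to ruling curves on the standard neighborhood of a stabilization (Lemma~7.8 of \cite{EtnyreMinMukherjee22Pre}) to produce the isotopies once the stabilization thresholds are crossed. But there is a genuine gap at exactly the step you yourself flag as ``the main obstacle'' and then dispose of in one sentence: upgrading a Legendrian isotopy of the cables to the statement that the tori they sit on are contact isotopic, so that the complements are contactomorphic and the surgery (or stabilized-core) data can be compared. Your proposed justification --- that uniform thickness makes the solid torus carrying the destabilized cable ``unique up to isotopy, ruling out exotic thickenings'' --- is not a proof, and it is not even the right mechanism. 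The exotic intermediate tori really do occur: in the paper's induction (proof of Proposition~\ref{prop2}), when a torus $T_k$ in the isotopy discretization has integer dividing slope and bounds a standard neighborhood of some Legendrian $L'$, that neighborhood genuinely contains solid tori (for instance a standard neighborhood of $S_\mp(L')$) that fail to contain $N_i$, and uniform thickness does nothing to exclude them. What excludes them is a rotation-number computation showing that if the cable sat on the boundary of such a torus it would have to be a further stabilization of $(L_i)^\pm_{(p,q)}$ than assumed; that is, the hypothesis that the cable has \emph{not yet} been stabilized $|p\tb(L)-q|$ (resp.\ $|p(\tb(L)-1)-q|$) times is what powers the argument, not uniform thickness.

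Concretely, the missing content is the paper's Lemma~\ref{different} together with the inductive bypass analysis: after arranging by an ambient contact isotopy that the two cables coincide, one applies discretization of isotopy \cite{Colin97,Honda02} to get a sequence of convex tori, all containing the cable, successively related by bypass attachments disjoint from it, and then proves inductively --- with separate cases for bypasses attached from inside or outside, and for integer versus non-integer dividing slopes --- that every torus in the sequence bounds a solid torus trapping a convex torus in $N_i\setminus N'_i$. This induction is the technical heart of both Proposition~\ref{prop1} (through Lemma~\ref{different}) and Proposition~\ref{prop2}, and nothing in your outline substitutes for it. A secondary error: your rotation-number formula is the greater-sloped one; for these cables $\rot((L_i)^\pm_{(p,q)})=p\rot(L_i)\pm(p\tb(L_i)-q)$ (Lemma~3.8 of \cite{EtnyreHonda05}), not $q\rot(L_i)$ plus a constant, and it is the correct formula that produces the thresholds $|p\tb(L)-q|$ and $|p(\tb(L)-1)-q|$ appearing in the stabilization statements.
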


We note that the statements in the theorem do not necessarily classify all Legendrian cable knots of uniformly thick knot types, but the theorem and its proof are sufficient in many cases. In particular, we have the following result. 

\begin{theorem}\label{knotcable}
  Let $K$ be the $-2n$-twist knot, $-q/p\in(-m-1,-m)$ for $m\geq0$ an integer, and $k=\lceil\frac n2\rceil$. Any $(r,t)$ in the mountain range of $\mathcal{L}(K_{(p,-q)})$ with $t\leq pq$ and $r\in [p-q+(pq+t), -q+p+(t-pq)]$ is realized by a unique Legendrian knot and any other pair $(r',t')$ in the mountain range is realized by $k$ distinct Legendrian knots. 

  More specifically, there are $2m+4k$ Legendrian knots in $\mathcal{L}(K_{(p,q)})$ with $\tb=pq$. Of those, $2m$ are determined by their rotation numbers, which are
  \[
    \pm (p-q+2pl)\quad \text{ for } l=0,\ldots m-1,
  \] 
  and for each rotation number 
  \[
    \rot = \pm (p+q) \text{ and } \pm ((2m+1)p-q).
  \]
  there are $k$ Legendrian knots. All other Legendrian knots are stabilizations of these. The mountain range for $K_{(p,q)}$ is given in Figure~\ref{fig:cablemr}. 
\end{theorem}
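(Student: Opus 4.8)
The plan is to apply Theorem~\ref{lesserknots} to $K = K_{-2n}$, which is uniformly thick for $n\ge 2$ by Theorem~\ref{thm:uniform}, and then translate the resulting knot classification into the mountain range of $K_{(p,-q)}$. Since $-q/p\in(-m-1,-m)$ we have $\lceil -q/p\rceil = -m$, so the knots $L_1,\dots,L_k$ in Theorem~\ref{lesserknots} are exactly the Legendrian representatives of $K_{-2n}$ with $\tb=-m$. Using the classification of $\mathcal{L}(K_{-2n})$ recalled before Theorem~\ref{integerex}, these split into the $m$ \emph{interior} representatives $L_{(r,-m)}$ with $r\in\{-(m-1),-(m-3),\dots,m-1\}$, each determined by its classical invariants, and the $2k$ \emph{edge} representatives $L^i_{(\pm(m+1),-m)}$, $i=1,\dots,k$, occurring in $k$ copies at each of the two extreme rotation numbers $\pm(m+1)$.

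First I would record the classical invariants of the standard cables. Each $(L_i)^\pm_{(p,-q)}$ has $\tb=pq$, and its rotation number is $\rot\big((L_i)^\pm_{(p,-q)}\big)=p\,\rot(L_i)\pm(q-mp)$, where $q-mp=p\,\tb(L_i)+q>0$. Feeding in the rotation numbers of the $L_i$, the $2m$ interior cables $(L_{(r,-m)})^\pm_{(p,-q)}$ realize precisely the values $\pm(p-q+2pl)$, $l=0,\dots,m-1$, while the four edge families realize $\pm(p+q)$ (from the $+$ cables of $L^i_{(m+1,-m)}$ and the $-$ cables of $L^i_{(-(m+1),-m)}$) and $\pm((2m+1)p-q)$ (from the remaining sign choices), each backed by $k$ distinct underlying knots. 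A short check using $mp<q<(m+1)p$ together with $\gcd(p,q)=1$ (so that $q$ is not an integer multiple of $p$) shows all of these rotation numbers are pairwise distinct; this is what cleanly separates the interior band of the mountain range from its two edges and gives the interval in the first assertion.

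Next I would invoke the distinguishing criteria of Theorem~\ref{lesserknots}. For the interior cables the underlying knots are unique and carry distinct rotation numbers, so criterion~(1) makes each interior cable unique, yielding the claim that the central band is realized once. For the two edge families at $\pm(p+q)$, the relevant $\pm$-stabilization of the underlying knot is $L^i_{(\pm(m+2),-m-1)}$, which stays distinct in $i$, so criterion~(2) separates the $k$ copies. The difficulty concentrates in the edge families at $\pm((2m+1)p-q)$: there the relevant $\pm$-stabilization of $L^i_{(\pm(m+1),-m)}$ collapses to the single interior representative $L_{(\pm m,-m-1)}$, so criterion~(2) gives nothing and we must use criterion~(3), namely that Legendrian surgery on the $k$ distinct knots $L^i_{(\pm(m+1),-m)}$ produces $k$ distinct contact manifolds. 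That every other representative destabilizes to one of these is immediate from Theorem~\ref{lesserknots}, and the stabilization criteria (in particular criterion~(4), applied to the chains $S^l_\pm(L^i_{(\pm(m+1),-m)})=L^i_{(\pm(m+1+l),-m-l)}$, which remain distinct for all $l$) show that the edges stay $k$-fold while the interior band stays simple throughout $t\le pq$.

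The main obstacle is verifying the hypothesis of criterion~(3): that Legendrian surgery along the $k$ representatives $L^i_{(\pm(m+1),-m)}$ yields $k$ distinct contact manifolds. These knots are positively (resp.\ negatively) stabilized, so their Legendrian contact homology and generating-family invariants vanish and are useless here; on the other hand each surgery is performed along the \emph{same} smooth framing $-m-1$ of $K_{-2n}$, so the underlying smooth manifold is fixed, every surgery is Stein fillable (hence tight), and we are really asking that the $k$ resulting tight contact structures be pairwise non-isotopic. This is exactly where the paper's Legendrian-surgery technique must do its work: I would establish it by identifying the surgered manifold and separating the contact structures via the Heegaard Floer contact invariant (or a count of tight structures), tracking the $k=\lceil n/2\rceil$ distinct representatives through the surgery. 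Everything else in the theorem is bookkeeping resting on the rotation-number computation and the criteria of Theorem~\ref{lesserknots}.
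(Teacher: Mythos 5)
Your setup is the same as the paper's: apply Theorem~\ref{lesserknots} to the classification of $\mathcal{L}(K_{-2n})$ at $\tb=-m$, compute the rotation numbers of the standard cables (your formula $p\rot(L_i)\pm(q-mp)$ and the resulting peak values $\pm(p-q+2pl)$, $\pm(p+q)$, $\pm((2m+1)p-q)$ all agree with the paper's), and then invoke the distinguishing criteria. Your observation that criterion~(2) already suffices for the families at $\pm(p+q)$ (because $S_+(L^i_{(m+1,-m)})=L^i_{(m+2,-m-1)}$ remain distinct in $i$) is a small refinement the paper does not make; the paper runs criterion~(3) for all four edge families at once.

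There is, however, a genuine gap at exactly the point you flag and then defer: you never establish the hypothesis of criterion~(3), namely that Legendrian surgery on the $k$ knots $L^i_{(\pm(m+1),-m)}$ yields $k$ pairwise distinct contact manifolds. This is not bookkeeping; it is the essential input, and without it the $k$-fold multiplicity at $\rot=\pm((2m+1)p-q)$ — as well as the persistence of that multiplicity under stabilization, since item~(3) of the stabilization criteria carries the same hypothesis — is unproven. Saying you ``would establish it by separating the contact structures via the Heegaard Floer contact invariant'' is a plan, not a proof; carrying it out is the content of an entire separate paper. The paper closes this hole by citation: \cite{WanZhou24pre} proves that Legendrian surgeries on the left-edge family $L^l_j$ are pairwise distinct, and then an explicit contactomorphism $(x,y,z)\mapsto(-x,-y,z)$ of $(\R^3,\ker(dz-y\,dx))$, which carries the right-edge representatives to the left-edge ones with reversed orientation, transfers the statement to the $L^r_j$. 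Note that you need the surgery statement for \emph{both} edge families (your criterion-(2) shortcut only covers the rotation numbers $\pm(p+q)$), so some such mirror argument, or a second computation, would also be required to complete your write-up.
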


\begin{figure}[htb]

{
\begin{overpic}
{fig/cablemr}
\end{overpic}}
\caption{The mountain range for $K_{(p,q)}$ where $K$ is a $-2n$-twist knots and $q/p\in(-m-1,-m)$. Any point $(r,t)$ with $r+t$ odd in the red region is realized by a unique Legendrian knot, while in the black region is realized by $k$ Legendrian knots, where $k$ is as in Theorem~\ref{knotcable}. The peaks are at $\tb=pq$ and the values of the rotation numbers at the peaks are given in Theorem~\ref{knotcable}.} 
\label{fig:cablemr}
\end{figure}

The only range of cabling slopes in which we cannot classify cables for $-2n$ twist knots is $(0,1)$, but under one assumption, we can understand this range too. 

\begin{theorem}\label{ifsurg}
  Let $K$ be the $-2n$-twist knot, $q/p\in(0,1)$, $k=\lceil\frac n2\rceil$ and $l=\lceil\frac{n^2}{2}\rceil$. Suppose that Legendrian surgery on the maximal Thurston-Bennequin invariant Legendrian representatives of $K$ yields distinct contact manifolds. Then any $(r,t)$ in the mountain range of $\mathcal{L}(K_{(p,q)})$ with $r\in [-(pq-p-q-t), pq-p-q-t]$ is realized by a unique Legendrian knot, any such pair with $r\in [\pm p -(pq-q-t), \pm p+(pq-q-t)]$ that is not in the first region has exactly $k$ Legendrian representatives, and any other pair of $(r,t)$ in the mountain range is realized by $l$ distinct Legendrian knots. 

  More specifically, there are $2l$ distinct Legendrian knots $L^\pm_i, i=1,\ldots, l$ in $\mathcal{L}(K_{(p,q)})$ with $\tb=pq$ and $\rot(L^\pm_i)=\pm (p-q)$. All other Legendrian knots in $\mathcal{L}(K_{(p,q)})$ destabilize to one of these. In addition, $S^{p-q}_-(L^+_i)=S^{p-q}_+(L^-_i)$, the collection $\{L^+_i\}_{i=1}^l$ after $+$-stabilizing $q$ times has $k$ distinct members and similarly for $\{L^-_i\}_{i=1}^l$ after $-$-stabilizing $q$ times. After the $L_i^\pm$ have been stabilized into the first regions mentioned above, then they are determined by their classical invariants. The mountain range $K_{(p,q)}$ is given in Figure~\ref{fig:cablemr2}. 
\end{theorem}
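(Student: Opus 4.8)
The plan is to deduce Theorem~\ref{ifsurg} from the general machinery of Theorem~\ref{lesserknots}, specialized to $K=K_{-2n}$, together with the mountain range of $\mathcal{L}(K_{-2n})$ recalled above and the surgery hypothesis. First I would observe that since $K_{-2n}$ is uniformly thick with $\tbb(K)=w(K)=1$ and $q/p\in(0,1)$, the slope $q/p$ is a non-integer lesser slope, so Theorem~\ref{lesserknots} applies. Here $\lceil q/p\rceil=1$, so the maximal representatives $L_1,\dots,L_k$ of Theorem~\ref{lesserknots} are exactly the $l=\lceil n^2/2\rceil$ knots $L^i_{(0,1)}$ with $\tb=1$, $\rot=0$. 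Thus every element of $\mathcal{L}(K_{(p,q)})$ destabilizes to some $(L^i_{(0,1)})^\pm_{(p,q)}$, and a standard-neighborhood computation (using $\tb(L^i_{(0,1)})=1$ and $\rot=0$) gives $\tb=pq$ and $\rot=\pm(p-q)$ for these, producing the two peaks at $(\pm(p-q),pq)$.

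The next step is to count the peaks. All $L^i_{(0,1)}$ share the rotation number $0$, so criterion (1) of the first list in Theorem~\ref{lesserknots} never applies; and because their stabilizations are not all distinct (from the recalled classification $\{S_+(L^i_{(0,1)})\}$ has only $k<l$ members $L^j_{(1,0)}$), criterion (2) cannot separate all pairs. The hypothesis resolves this: since Legendrian surgery on distinct $L^i_{(0,1)}$ yields distinct contact manifolds and all share $\rot=0$, criterion (3) applies to every pair and shows the $(L^i_{(0,1)})^+_{(p,q)}$ are pairwise distinct, and likewise for the $-$ cables. As $p-q>0$ the rotation number separates the two signs, so there are exactly $2l$ non-destabilizable representatives $L^\pm_i$, with $l$ at each peak.

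I would then read off the merging pattern by substituting $\tb(L^i_{(0,1)})=1$ into the thresholds in the second list of Theorem~\ref{lesserknots}. Part (1), with threshold $|p\tb-q|=p-q$, gives $S^{p-q}_-(L^+_i)=S^{p-q}_+(L^-_i)$, the identification across the top of the range. Part (3), with threshold $|p(\tb-1)-q|=q$, shows that $(L^i_{(0,1)})^+$ and $(L^j_{(0,1)})^+$ with $S_+(L^i_{(0,1)})=S_+(L^j_{(0,1)})$ become isotopic after $q$ positive stabilizations; combined with the surjection $\{L^i_{(0,1)}\}\to\{L^j_{(1,0)}\}$ this collapses the $l$ families to $k$ families, realized at the point $(p,pq-q)$ reached by $+$-stabilizing the $+$ peak $q$ times, which centers the $k$-fold region at $\rot=\pm p$. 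Part (4) keeps the $k$ families distinct until the underlying knots $L^j_{(1,0)}$ stabilize to generic (classically determined) representatives of $K$, at which point the cables are determined by their classical invariants and the $k$ families collapse to one. The symmetric statements hold for negative stabilizations, and tracking these three stages yields the three regions of Figure~\ref{fig:cablemr2}, with representative counts $l$, $k$, and $1$.

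The main obstacle is the bookkeeping in this last step: the two collapse mechanisms (the $+/-$ identification of part (1) and the surgery-driven $l\to k\to1$ collapse of parts (3)--(4)) run simultaneously, and converting their per-pair stabilization thresholds into the global closed-form region boundaries in $(r,t)$ --- namely $r\in[-(pq-p-q-t),\,pq-p-q-t]$ for the unique region and $r\in[\pm p-(pq-q-t),\,\pm p+(pq-q-t)]$ for the $k$-fold region --- requires solving the stabilization inequalities at each lattice point $(r,t)$ and checking that the boundaries meet as claimed. Verifying the invariant formulas $\tb=pq$ and $\rot=\pm(p-q)$ at the peaks is a secondary computation that anchors this bookkeeping.
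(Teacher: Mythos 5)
Your proposal is correct and takes essentially the same route as the paper: the paper's own proof of Theorem~\ref{ifsurg} is a one-line reference saying it is ``almost identical'' to the proof of Theorem~\ref{knotcable}, i.e.\ one specializes Theorem~\ref{lesserknots} (Propositions~\ref{prop1} and~\ref{prop2}) to the $l$ maximal representatives $L^i_{(0,1)}$ with $\tb=1$, $\rot=0$, with the surgery hypothesis playing the role that \cite{WanZhou24pre} played for the slopes in Theorem~\ref{knotcable}. Your computed thresholds $p-q$ (Item~(1)) and $q$ (Item~(3)), the use of Item~(4) to keep the $k$ families distinct, and the resulting corner points $(\pm(p-q),pq)$, $(0,pq-p+q)$, $(\pm p,pq-q)$ are exactly the intended specialization.
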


\begin{figure}[htb]

{
\begin{overpic}
{fig/cablemr2}
\put(192, 128){$(p-q,pq)$}
\put(72, 128){$(q-p,pq)$}
\put(125, 90){$(0,pq-p+q)$}
\put(224, 92){$(p,pq-q)$}
\put(32, 92){$(-p,pq-q)$}
\put(165, 28){$(0,pq-p-q)$}
\end{overpic}}
\caption{The $(p,q)$-cables of $K_{-2n}$ where the $(r,t)$ with $r+t$ odd in the upper region have $l=\lceil\frac{n^2}2\rceil$ Legendrian representatives, such pairs in the regions on the right and left have $k=\lceil \frac n2\rceil$ Legendrian representatives, and such pairs in the bottom region have a unique Legendrian representative.} 
\label{fig:cablemr2}
\end{figure}

We note that in \cite{BourgeoisEkholmEliashberg12}, the hypothesis of the previous theorem was established for $n=2$. While it is likely that the techniques in that paper established the hypothesis for all $n>2$ too, we have not been able to verify that. 

We now turn to lesser-sloped, non-integral cable links. 

\begin{theorem}\label{negative}
  Let $K$ be a uniformly thick knot type and $p$ and $q$ relatively prime with $q/p<\tbb(K)$. Let $L_1, \ldots, L_k$ be the distinct Legendrian knots in $\mathcal{L}(K)$ with $\tb=\lceil q/p\rceil$. Every element in $\mathcal{L}(K_{n(p,q)})$ destabilizes to $(L_i)^\pm_{n(p,q)}$ for some $i$ and $\pm$.

  We distinguish the $(L_i)^\pm_{n(p,q)}$ as follows.
  \begin{enumerate}
    \item If $\rot(L_i)\not= \rot(L_j)$, then $(L_i)^\pm_{n(p,q)}$ and $(L_j)^\pm_{n(p,q)}$ are distinct. 
    \item If $\rot(L_i)=\rot(L_j)$ but $S_\pm(L_i)\not=S_\pm(L_j)$, then $(L_i)^\pm_{n(p,q)}$ and $(L_j)^\pm_{n(p,q)}$ are distinct. 
    \item If $\rot(L_i)=\rot(L_j)$ but Legendrian surgery in $L_i$ and $L_j$ yield distinct contact manifolds, then $(L_i)^\pm_{n(p,q)}$ and $(L_j)^\pm_{n(p,q)}$ are distinct. 
    \item Otherwise, it is not clear if $(L_i)^\pm_{n(p,q)}$ and $(L_j)^\pm_{n(p,q)}$ are distinct. 
  \end{enumerate} 

  We  distinguish stabilizations of the $(L_i)^\pm_{n(p,q)}$ as follows.
  \begin{enumerate}
    \item Stabilizations of $(L_i)^+_{n(p,q)}$ and $(L_i)^-_{n(p,q)}$ remain distinct until each component of the first link has been  $-$-stabilized $|p\tb(L)-q|$ or more times, each component of the second link has been $+$ -stabilized $|p\tb(L)-q|$ or more times, and the classical invariants of the components of one link match those of the other, in this case they are Legendrian isotopic.
    \item If $S_+(L_i)=S_-(L_j)$, then $(L_i)^+_{n(p,q)}$ and $(L_j)^-_{n(p,q)}$ will have distinct rotation numbers until each component of the first has been $+$-stabilized $|p(\tb(L)-1)-q|$ times and each component of the second has been $-$-stabilized $|p(\tb(L)-1)-q|$ times. After each has been stabilized as indicated, they become Legendrian isotopic. 
    \item If $S_\pm(L_i)=S_\pm(L_j)$, but Legendrian surgery on $L_i$ and $L_j$ yield distinct contact manifolds, then stabilizations of $(L_i)^\pm_{n(p,q)}$ and $(L_j)^\pm_{n(p,q)}$ remain distinct until each component of the of both links have been $\pm$-stabilized $|p(\tb(L)-1)-q|$ or more times, and the classical invariants of the components of one link match those of the other, in this case they are Legendrian isotopic.
    \item If $S^l_\pm(L_i)$ is not Legendrian isotopic to $S^l_\pm(L_j)$ for any $l$, then any number of $\pm$-stabilizations of $(L_i)^\pm_{n(p,q)}$ and $(L_j)^\pm_{n(p,q)}$ remain distinct.
  \end{enumerate} 
\end{theorem}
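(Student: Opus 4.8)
The plan is to deduce Theorem~\ref{negative} from its knot-theoretic counterpart, Theorem~\ref{lesserknots}, using the basic fact that a Legendrian isotopy of links restricts to a Legendrian isotopy of each component. Under this reduction the distinctness assertions follow almost formally from the knot case, whereas the assertions that two links \emph{become} Legendrian isotopic after prescribed stabilizations carry the genuinely new link-level content and must be proved by constructing explicit isotopies. Throughout I would work on the common convex torus $T_\pm$ carrying the leaves, since by construction all $n$ components of a $\pm$ standard cable link are parallel ruling curves on $T_\pm$.

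First I would establish the destabilization statement. Given any $\Lambda\in\mathcal{L}(K_{n(p,q)})$, I would use convex surface theory together with the uniform thickness of $K$ to isotope $\Lambda$ so that all $n$ components lie as Legendrian ruling curves on a single convex torus $T$ bounding a solid torus $V$ in the knot type of $K$. The new point, beyond the single-component argument behind Theorem~\ref{lesserknots}, is to arrange \emph{all} components onto one torus simultaneously; this is possible because the components are mutually disjoint curves in the cable class and can therefore be realized as disjoint ruling curves of a common convex torus. Uniform thickness then lets me thicken $V$ into a standard neighborhood of a representative of $K$ with maximal $\tb$, identify the underlying knot, and destabilize $\Lambda$ along $T$ to recognize it as $(L_i)^\pm_{n(p,q)}$ with $\tb(L_i)=\lceil q/p\rceil$.

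For the distinctness statements --- items~(1)--(3) of the first list and the ``remain distinct'' clauses of the second --- I would restrict to a single component. A link isotopy between $(L_i)^\pm_{n(p,q)}$ and $(L_j)^\pm_{n(p,q)}$ would carry components to components, and since every component of the first link is a copy of the cable knot $(L_i)^\pm_{(p,q)}$ and every component of the second is a copy of $(L_j)^\pm_{(p,q)}$, it would produce a Legendrian isotopy of those cable knots. Theorem~\ref{lesserknots} rules this out under each hypothesis on $\rot$, on $S_\pm(L_i)$ versus $S_\pm(L_j)$, and on Legendrian surgery; applying the same restriction to the stabilized links --- and using that stabilizing a component of the link is the same as stabilizing that component knot --- reproduces the thresholds $|p\tb(L)-q|$ and $|p(\tb(L)-1)-q|$ verbatim from the knot case. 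In particular no new Legendrian surgery computation is required for this theorem, as that input already lives inside Theorem~\ref{lesserknots}.

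The hard part will be the positive assertions that the two links \emph{are} Legendrian isotopic once the indicated stabilizations have been performed, since here component-wise isotopy is not sufficient, as the examples in this paper demonstrate, and an honest link isotopy is needed. The starting point is that in each of these cases Theorem~\ref{lesserknots} already gives coincidence of the corresponding cable \emph{knots} after exactly the stated stabilizations, realized by an isotopy supported in a neighborhood of the torus $T_\pm$; for item~(2), for instance, both stabilized cables become the standard cable of the common knot $S_+(L_i)=S_-(L_j)$. The obstacle is to upgrade this to the link level: I would verify that the relevant move is an isotopy of convex tori of a fixed slope within the thickened region between $\partial N$ and $\partial N_\pm$, so that it preserves the linear characteristic foliation and carries the whole family of $n$ parallel ruling curves coherently. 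Checking that the induced link isotopy pairs the $n$ components in a matching cyclic order, and that the stabilization counts needed to bring both links onto a common convex torus agree with the knot-level thresholds, is the central technical point, which I would settle through the arithmetic of dividing slopes on $T_\pm$ exactly as in the single-component computation.
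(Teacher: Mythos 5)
Your proposal is correct and follows essentially the same route as the paper: the paper's proof of Theorem~\ref{negative} simply reduces everything to Propositions~\ref{prop1} and~\ref{prop2} (i.e.\ Theorem~\ref{lesserknots}), with the destabilization and positive-isotopy claims obtained by running those proofs with $n$ parallel ruling curves and annuli, and distinctness following formally from the knot-level statements by restricting to components, exactly as you describe.
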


The statements in the theorem do not necessarily classify all Legendrian cable knots of uniformly thick knot types, but the theorem and its proof are sufficient in many cases. In particular, we have the following result. 

\begin{theorem}\label{lesserlink}
  Let $K$ be the $-2n$-twist knot, $p$ and $q$ relatively prime with $-q/p<1$ and $n>1$ an integer. Let $L_1, \ldots, L_k$ be the distinct Legendrian knots in $\mathcal{L}(K)$ with $\tb=\lceil -q/p\rceil$. Every element in $\mathcal{L}(K_{n(p,-q)})$ destabilizes to $(L_i)^\pm_{n(p,-q)}$ for some $i$ and $\pm$. Two Legendrian links in $\mathcal{L}(K_{n(p,-q)})$ are Legendrian isotopic if and only if they are component-wise isotopic. The classification of Legendrian realizations of $K_{n(p,q)}$ now follows from Theorem~\ref{knotcable} when $q/p<0$ and from Theorem~\ref{ifsurg} if Legendrian surgery on the maximal Thurston-Bennequin invariant representatives of $K$ are distinct. 
\end{theorem}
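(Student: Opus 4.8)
The plan is to deduce the biconditional from Theorem~\ref{negative} together with the knot-level classifications of Theorems~\ref{knotcable} and~\ref{ifsurg}. First I would record that $K=K_{-2n}$ is uniformly thick for $n>1$ by Theorem~\ref{thm:uniform} and that the cabling slope $-q/p<1=\tbb(K)$ is lesser-sloped, so Theorem~\ref{negative} applies verbatim. This immediately yields the destabilization statement: every element of $\mathcal{L}(K_{n(p,-q)})$ destabilizes to some $(L_i)^\pm_{n(p,-q)}$, where the $L_i$ are the representatives of $K$ with $\tb=\lceil -q/p\rceil$. The forward implication of the stated equivalence (Legendrian isotopic $\Rightarrow$ component-wise isotopic) is immediate, since a Legendrian isotopy of links restricts to Legendrian isotopies of a permutation of the components, so all the content lies in the reverse direction.

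For the reverse implication I would take two component-wise isotopic links $\Lambda^1,\Lambda^2$ and show they are Legendrian isotopic. Each destabilizes to a standard model $(L_i)^\pm_{n(p,-q)}$, all $n$ components of which are isotopic to the single cable knot $(L_i)^\pm_{(p,-q)}$; thus the component-wise data is governed entirely by the knot classification of $\mathcal{L}(K_{(p,-q)})$ supplied by Theorem~\ref{knotcable} (for negative cabling slope) and Theorem~\ref{ifsurg} (for cabling slope in $(0,1)$, under the stated surgery hypothesis). The core of the argument is then to run through the distinguishing cases of Theorem~\ref{negative} and verify that, for this particular knot type, each ``remain distinct until stabilized'' threshold for the \emph{link} coincides with the threshold at which the component cable \emph{knots} become isotopic. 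The relevant feature is that the stabilization conditions in Theorem~\ref{negative} are all \emph{uniform} (every component $\pm$-stabilized the same number of times), while the computations in Theorems~\ref{knotcable} and~\ref{ifsurg} show that for the $-2n$-twist knot the distinct cable knots sharing a rotation number merge only after the same number of stabilizations on each representative; matching these shows the two notions of isotopy agree in cases (1)--(3), and case (4) of Theorem~\ref{negative} corresponds to components that never match, hence to links that are not even component-wise isotopic.

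The step I expect to be the main obstacle is ruling out, for these lesser-sloped cables, the phenomenon exhibited for greater-sloped cables in Theorem~\ref{newpenom}: a pair of component-wise isotopic links that fail to be Legendrian isotopic because their components were stabilized \emph{non-uniformly} (some positively, some negatively) through two genuinely different building blocks. I would argue this cannot occur here by exploiting the geometric flexibility of lesser-sloped cables, namely that a non-destabilizable model $(L_i)^\pm_{n(p,-q)}$ sits as parallel leaves on a convex torus and, after stabilization, its components can be cyclically permuted and slid along that torus (the non-integer analogue of Theorem~\ref{integerlesserslopeordered}). The key point to establish is that whenever the components of $\Lambda^1$ and $\Lambda^2$ can be paired to be isotopic, there is a single common building block $(L)^\pm_{(p,-q)}$ to which all components of both links simultaneously destabilize; once this common block is produced, Theorem~\ref{negative} forces $\Lambda^1$ and $\Lambda^2$ into the same stabilization cone and the Legendrian isotopy follows. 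Producing this common block is exactly where the $-2n$-twist-knot computations are essential: one must check that a component-wise matching cannot split across two different building blocks for these slopes, which is precisely the failure that does occur for greater-sloped cables but not here. Finally, assembling the established equivalence with the knot classifications of Theorems~\ref{knotcable} and~\ref{ifsurg} yields the stated enumeration of $\mathcal{L}(K_{n(p,-q)})$, completing the proof.
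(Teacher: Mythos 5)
Your outline follows the same route as the paper: the paper's entire proof of Theorem~\ref{lesserlink} is the one\nobreakdash-line assertion that it follows from Theorems~\ref{knotcable} and~\ref{negative}, and your threshold\nobreakdash-matching discussion is precisely the content that assertion suppresses. For \emph{negative} cabling slopes your plan can indeed be completed: at the level $\tb=\lceil -q/p\rceil\leq 0$ the non-destabilizable representatives of $K_{-2n}$ are interior knots (determined by their classical invariants), left-edge knots (which merge only under positive stabilization and stay distinct under any number of negative stabilizations), and right-edge knots (the mirror situation). Hence a component-wise matching between stabilizations of two different building blocks forces every component past the uniform threshold of Theorem~\ref{negative}, which then supplies the Legendrian isotopy.

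The genuine gap is the step you flag as the main obstacle: the claim that a component-wise matching cannot split across two different building blocks is asserted, not proved, and for cabling slope $q/p\in(0,1)$ it fails --- or else it contradicts Theorem~\ref{negative}(3), on which your threshold argument rests. Indeed, for $n\geq 4$ the paper's second proof of Theorem~\ref{newpenom} records that there are peak knots $A=L^i_{(0,1)}\neq B=L^j_{(0,1)}$ with $S_+(A)=S_+(B)$ \emph{and} $S_-(A)=S_-(B)$, while Theorem~\ref{ifsurg} assumes Legendrian surgeries on them are distinct. Consider
\[
\Lambda = S_{1,+}^q\circ S_{2,-}^p\circ\cdots\circ S_{n,-}^p\bigl(A^+_{n(p,q)}\bigr),
\qquad
\Lambda' = S_{1,+}^q\circ S_{2,-}^p\circ\cdots\circ S_{n,-}^p\bigl(B^+_{n(p,q)}\bigr).
\]
The first components agree by item~(3) of Theorem~\ref{lesserknots} (plus sign, threshold $|p(\tb-1)-q|=q$). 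For the others, item~(1) gives $S_-^{p-q}(A^+_{(p,q)})=S_+^{p-q}(A^-_{(p,q)})$, item~(3) with the minus sign gives $S_-^{q}(A^-_{(p,q)})=S_-^{q}(B^-_{(p,q)})$, and commuting stabilizations then yields $S_-^{p}(A^+_{(p,q)})=S_-^{p}(B^+_{(p,q)})$. So $\Lambda$ and $\Lambda'$ are component-wise isotopic. Yet item~(3) of Theorem~\ref{negative} asserts that stabilizations of $A^+_{n(p,q)}$ and $B^+_{n(p,q)}$ remain distinct until \emph{every} component has been $+$-stabilized at least $q$ times, which the components $2,\ldots,n$ never are. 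Thus either the biconditional of Theorem~\ref{lesserlink} fails in this slope range --- i.e.\ the phenomenon of Theorem~\ref{newpenom} recurs for lesser-sloped cables, exactly what you hoped to exclude --- or Theorem~\ref{negative}(3) is overstated for mixed-sign stabilization patterns, in which case the uniform thresholds you invoke elsewhere are themselves unjustified. Your proposal does not resolve this dichotomy (nor does the paper's one-line proof); producing the ``common building block'' in the $(0,1)$ range is not a routine verification but an actual obstruction, and any correct proof must either restrict to negative slopes or add an argument showing such mixed-stabilized links are in fact Legendrian isotopic.
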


\subsection*{Acknowledgements.}
The authors would like to thank Hiro Lee Tanaka for pointing out the reference \cite{JordanTraynor06} and the existence of distinct Legendrian links that were smoothly isotopic and component-wise Legendrian isotopic. 
The first author is partially supported by SFB/TRR 191 ``Symplectic Structures in Geometry, Algebra and Dynamics, funded by the DFG (Project- ID 281071066-TRR 191)'' ,the Georgia Institute of Technology's Elaine M. Hubbard Distinguished Faculty Award, and the AWM Mentoring Grant. This project started when the first author was visiting Georgia Tech. She thanks them for their hospitality.  The second author was partially supported by National Science Foundation grant DMS-2203312 and the Georgia Institute of Technology's Elaine M. Hubbard Distinguished Faculty Award. The fourth author was partially supported by National Science Foundation grant DMS-2203312.

\section{Greater-sloped cables}\label{sec:greater}
In this section we prove all the stated results about greater-sloped cables. We start with Lemma~\ref{cones}, which says that $L\in\mathcal{L}(K)$, $q/p\geq \lceil \omega(L)\rceil$, and $L'$ is in the cone $C(L)$ of $L$, then we have $C(L'_{n(p,q)})\subset C(L_{n(p,q)})$. 

\begin{proof}[Proof of Lemma~\ref{cones}]
Suppose that $L'=S_+^k\circ S_-^l(L)$. We claim that $L'_{n(p,q)}$ is obtained from $L_{n(p,q)}$ by stabilizing each component $pk$ times positively and $pl$ times negatively. Given this, the claim that $C(L'_{n(p,q)})\subset C(L_{n(p,q)})$ follows. 

To see that the claim is true, we note that if $N$ is a standard neighborhood of $L$ then we can take the standard neighborhood $N'$ of $L'$ to be inside of $N$. Recall that $L_{n(p,q)}$ sits on $\partial N$ as ruling curves and $L'_{n(p,q)}$ sits on $\partial N'$ as ruling curves. Let $A_1\cup \cdots \cup A_n$ be a collection of $n$ disjoint annuli such that each $A_i$ has one boundary component on a component of $L_{n(p,q)}$ and the other on a component of $L'_{n(p,q)}$. We can assume that the $A_i$ are convex and the dividing set of $A_i$ intersects $L_{n(p,q)}$ exactly $|p\tb(L)-q|$ times and $L'_{n(p,q)}$ exactly $|p\tb(L')-q|=|p(\tb(L)-k-l)-q|$ times. Thus, there are $p(k+l)$ boundary parallel dividing curves on $A_i$ giving bypasses of a component of $L'_{n(p,q)}$. Considering the rotation numbers of the components of $L'_{n(p,q)}$ and $L_{n(p,q)}$, we see there are $pk$ positive bypasses and $pl$ negative bypasses. Thus $L'_{n(p,q)}$ destabilizes to $L_{n(p,q)}$ as claimed. 

The second statement directly follows from the fact that the standard cable $L_{n(p,q)}$ lies on $\partial N$. Then any stabilization of $L_{n(p,q)}$ can be obtained by perturbing $L_{n(p,q)}$ to increase the intersection number with the dividing curves, so it will stay on $\partial N$. 
\end{proof}

We now turn to the main theorem concerning greater-sloped cables. To do so, we need the following definitions and lemma. Given a Legendrian link $\Lambda\in\mathcal{L}(K_{n(p,q)})$, we say that $L \in \mathcal{L}(K)$ is an \dfn{underlying Legendrian} of $\Lambda$ if $\Lambda$ is Legendrian isotopic to an $n$ copy of  $(p,q)$ curves on the boundary of a neighborhood of $L$. 

For a Legendrian link $\Lambda$ in $\mathcal{L}(K_{n(p,q)})$, we define a \dfn{minimal underlying Legendrian} $L_{min}$ to be an underlying Legendrian of $\Lambda$ such that neither $S_+(L_{min})$ nor $S_-(L_{min})$ is an underlying Legendrian of $\Lambda$. 

\begin{lemma}\label{lem:minimal}
Let $\Lambda$ be a Legendrian link in $\mathcal{L}(K_{n(p,q)})$. Then there exists a minimal underlying Legendrian knot $L_{min} \in \mathcal{L}(K)$ and if $L'_{min}$ is another minimal underlying Legendrian of $\Lambda$, then $\tb(L_{min}) = \tb(L'_{min})$ and $\rot(L_{min}) = \rot(L'_{min})$.
\end{lemma}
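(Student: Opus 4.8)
The plan is to convert the problem into the Thurston--Bennequin and rotation numbers of a core via the cable formulas, to produce one underlying Legendrian by putting the components of $\Lambda$ on convex tori around a common core, and then to identify ``minimal'' with ``realized by a standard neighborhood,'' which forces the classical invariants. First I would record the numerics. Since $q/p>\lceil w(K)\rceil\ge\tb(L)$ for every $L\in\mathcal L(K)$, we have $q>p\,\tb(L)$, so the formulas recalled from \cite{ChakrabortyEtnyreMin2024} read
\[
  \tb\bigl(L_{(p,q)}\bigr)=pq-|p\,\tb(L)-q|=pq-q+p\,\tb(L),\qquad \rot\bigl(L_{(p,q)}\bigr)=q\,\rot(L).
\]
These are strictly monotone in $\tb(L)$ and $\rot(L)$, hence invertible. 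Writing $M$ for the largest Thurston--Bennequin invariant among the components of $\Lambda$ and $P$ for the rotation number of a component attaining it, the target is to show that every minimal underlying Legendrian $L_{min}$ satisfies $\tb(L_{min})=(M-pq+q)/p$ and $\rot(L_{min})=P/q$; since $M$ and $P$ are invariants of $\Lambda$, this is precisely the asserted uniqueness.

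For existence I would realize the components of $\Lambda$ as $(p,q)$-ruling curves on the boundaries of a nested family of neighborhoods of a single Legendrian core. Smoothly the components are parallel $(p,q)$-curves on a cable torus and cobound the annuli cut out on that torus; after a small perturbation I would make the relevant tori convex and, using the Legendrian realization principle (the components are essential parallel curves, so the non-isolating condition holds), arrange each component to be a $(p,q)$-ruling curve on a convex torus of dividing slope at most $w(K)$ having two dividing curves. The outermost such torus bounds a solid torus $N$ in the knot type $K$; choosing a Legendrian core $L$ then exhibits every component of $\Lambda$ as a $(p,q)$-ruling curve on the boundary of a neighborhood of $L$, so $L$ is an underlying Legendrian.

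To pass to a minimal one and compute invariants, I would invoke the standard structure of tight contact solid tori: the core of a solid torus destabilizes---yielding another Legendrian isotopic to the core \emph{inside} the same solid torus, hence another underlying Legendrian---unless the solid torus is a \emph{standard} neighborhood of that core, i.e.\ has two dividing curves of slope equal to the core's $\tb$. Thus $L$ is minimal if and only if the neighborhood realizing the maximal-$\tb$ component of $\Lambda$ is standard; in that case that component is $L_{(p,q)}$ itself, and the first display gives $M=pq-q+p\,\tb(L)$ and $P=q\,\rot(L)$, pinning down $\tb(L_{min})$ and $\rot(L_{min})$. Starting from the underlying Legendrian produced above and repeatedly destabilizing the core until its standard neighborhood realizes the top component produces a minimal underlying Legendrian (establishing existence), and the displayed equations show every minimal underlying Legendrian has the same $\tb$ and $\rot$.

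The main obstacle I anticipate is the realization step: simultaneously making the cable torus (or the family of tori carrying the components) convex while keeping all components as ruling curves of a controlled dividing slope with only two dividing curves, i.e.\ verifying the non-isolating hypothesis and ruling out extra dividing curves that would corrupt the $\tb$ bookkeeping. A secondary point, which I would handle by citing the classification of tight contact structures on solid tori, is the re-coring assertion that an over-sized (non-standard) neighborhood of $L$ is genuinely a neighborhood of $S_\pm(L)$ as well, so that destabilizing the core always produces another underlying Legendrian and the destabilization process terminates exactly at the standard neighborhood.
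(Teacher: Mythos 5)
Your proof has a genuine gap at its central step, and it is exactly the case the paper's proof is built to handle. You claim that for any minimal underlying Legendrian $L_{min}$ the maximal-$\tb$ component of $\Lambda$ must be the standard cable $(L_{min})_{(p,q)}$ itself, so that $M=pq-q+p\,\tb(L_{min})$ and $P=q\,\rot(L_{min})$, and you deduce uniqueness from these formulas. But minimality only says that neither $S_+(L_{min})$ nor $S_-(L_{min})$ is an underlying Legendrian, and this can hold with \emph{no} component at the apex of the cone $C((L_{min})_{(p,q)})$: it suffices that one component be a pure positive stabilization $S_+^i((L_{min})_{(p,q)})$ and another a pure negative stabilization $S_-^j((L_{min})_{(p,q)})$ with $i,j>0$. (Components lying on the boundary of a standard neighborhood of $L_{min}$ need not be ruling curves; they may intersect the dividing set inefficiently.) Concretely, take $n=2$ and $\Lambda=S_{1,+}\circ S_{2,-}\bigl(L_{2(p,q)}\bigr)$ for any $L\in\mathcal{L}(K)$. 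Comparing classical invariants with the cones $C((S_\pm L)_{(p,q)})$ shows neither $S_+(L)$ nor $S_-(L)$ is underlying, so $L$ is minimal; yet no component equals $L_{(p,q)}$, both components attain $M=pq-q+p\,\tb(L)-1$, your formula would give $\tb(L_{min})=\tb(L)-1/p\notin\Z$ when $p\geq 2$, and $P$ is not even well defined since the two maximal-$\tb$ components have different rotation numbers. This is precisely why the paper's proof first shows that minimality forces either a component at the apex of the cone or components on \emph{both} boundary edges of the cone, and then runs a case analysis on how two cones can simultaneously exhibit these features; your argument sees only the first alternative, so the uniqueness claim is not established.

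A second, related problem is that your identification of ``minimal'' with ``realized by a standard neighborhood whose ruling curves give the top component,'' together with the core-destabilization process, runs in the wrong direction relative to the definition. By Lemma~\ref{cones}(1), destabilizing an underlying Legendrian always yields another underlying Legendrian, so ``the core cannot be destabilized inside the given torus'' is not the same as minimality, which requires that no \emph{stabilization} of $L_{min}$ be underlying. For instance, if $\Lambda=(S_+(L))_{n(p,q)}$ with $L$ non-destabilizable, then $L$ is underlying (via inefficient curves on the boundary of its standard neighborhood) but is not minimal, since $S_+(L)$ is underlying. Existence is in fact the easy half of the lemma: starting from any underlying Legendrian, repeatedly pass to a stabilization that is still underlying; this terminates because every underlying Legendrian $L$ satisfies $\tb(L)\geq (M-pq+q)/p$. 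The genuine content is the uniqueness of $(\tb,\rot)$ among minimal underlying Legendrians, and that is the part your approach does not prove.
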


\begin{proof}
  The existence of $L_{min}$ is obvious. By the definition, we know $S_{\pm}(L_{min})$ are not underlying Legendrian of $\Lambda$. This implies at least one of the following two possibilities:
  \begin{enumerate}
    \item at least one component of $\Lambda$ is $(L_{min})_{(p,q)}$, or
    \item at least two components of $\Lambda$ are $S^i_+((L_{min})_{(p,q)})$ and $S^j_-((L_{min})_{(p,q)})$ for $i, j > 0$, respectively.
  \end{enumerate} 
  This means that one component of $\Lambda$ is at the top of the cone of $(L_{min})_{(p,q)}$ or there are two conponents of $\Lambda$, one of which is on the left boundary edge of the cone $(L_{min})_{(p,q)}$ and the other is on the right boundary edge.
  
  Now, assume there exists another minimal underlying knot $L'_{min}$ for $\Lambda$. Suppose that $\tb(L'_{min}) > \tb(L_{min})$. Also, without loss of generality, we may assume $\rot(L_{min}) \leq \rot (L'_{min})$. In this case, the conditions (1) above does not hold for $L'_{min}$ since otherwise there would exist a component of $\Lambda$ with $\tb = \tb((L'_{min})_{(p,q)}) > \tb((L_{min})_{(p,q)})$, which contradicts that $L_{min}$ is an underlying Legendrian of $\Lambda$. Thus only the condition (2) holds for $L'_{min}$, which implies that there exists a component of $\Lambda$ that lies on the right boundary edge of the cone $C((L'_{min})_{(p,q)})$ However, since $\tb(L_{min})< \tb(L'_{min})$ and $\rot(L_{min}) \leq \rot(L'_{min})$, the cone $C((L_{min})_{(p,q)})$ does not contain the right boundary edge of the cone $C((L'_{min})_{(p,q)})$, which again contradicts that $L_{min}$ is an underying Legendrian of $\Lambda$. 
 
  Now consider the case that $\tb(L_{min}) = \tb(L'_{min})$. Again, without loss of generality, we may assume $\rot(L_{min}) < \rot(L'_{min})$. In this case, $C((L_{min})_{(p,q)})$ cannot contain the right boundary edge of $C((L'_{min})_{(p,q)})$. That means neither Conditions~(1) or~(2) can hold, which is a contradiction.
\end{proof}

\begin{proof}[Proof of Theorem~\ref{maingsc}]
The last statement about isotoping the components of the cable link follows from \cite[Lemma~7.11]{DaltonEtnyreTraynor2024} and the fact that stabilization is a well-defined operation. (The lemma in \cite{DaltonEtnyreTraynor2024} assumes uniform thickness, but it is not used in the proof. That was an oversight in the paper due to the fact that many of the results in the paper needed that assumption.)

Recall we are taking $L^1,\ldots, L^k$ to be the non-destabilizable Legendrian knots in $\mathcal{L}(K)$ and relatively prime integers $p$ and $q$ such that $q/p>\lceil w(K)\rceil$.  

Item~(1) follows from Proposition~7.7 in \cite{DaltonEtnyreTraynor2024}. That proposition assumes that $K$ is uniformly thick, but that is only needed for lesser-sloped cables; the proof for greater-sloped cables does not use uniform thickness. 

Item~(2) follows as they are all stabilizations of a fixed Legendrian link and we know any permutations of the components of the link that respect the classical invariants can be realized by a Legendrian isotopy. 

Now we consider Item~(3). The reverse implication follows from Item~(2). We now assume that $\Lambda$ and $\Lambda'$ are Legendrian isotopic. Clearly they have the same classical invariants. From the first part of the theorem, there exists some $i$ such that $\Lambda \in C((L^i)_{n(p,q)})$. Since $\Lambda'$ is Legendrian isotopic to $\Lambda$, we know $\Lambda'$ is also in $C((L^i)_{n(p,q)})$.  

Finally, we consider Item~(4). The reverse implication follows from Item~(1) of Lemma~\ref{cones}. Now suppose that $\Lambda\in C(L_{n(p,q)})\cap C(L'_{n(p,q)})$. According to Lemma~\ref{lem:minimal}, there exist minimal underlying Legendrian knots $L_{min} \in C(L)$ and $L'_{min} \in C(L')$ such that $\tb(L_{min}) = \tb(L'_{min})$ and $\rot(L_{min})= \rot(L'_{min})$. Then by Item~(2) of Lemma~\ref{cones}, $\Lambda$ lies on $T$, the boundary of a standard neighborhood of $L_{min}$ and also lies on $T'$, the boundary of a standard neighborhood of $L'_{min}$. That is, the intersection of two tori $T$ and $T'$ contains $\Lambda$. There is a smooth isotopy between $T$ and $T'$ fixing $\Lambda$ (see, for example, \cite[Lemma~2.16]{ChakrabortyEtnyreMin2024}). Then by discretization of isotopy \cite{Colin97,Honda02}, there exists a sequence of convex tori $T = T_1, \ldots, T_n = T'$ such that $T_{i+1}$ can be obtained by a bypass attachment to $T_i$ where the bypass does not intersect $\Lambda$. Since $L_{min}$ is minimal, it is clear that the dividing slopes of $T_i$ are in $(\tb(L_{min})-1, \infty)$. We claim that the dividing slopes of $T_i$ are actually in $[\tb(L_{min}), \infty)$ by showing that they cannot be in $(\tb(L_{min})-1, \tb(L_{min}))$. To see this, we note that if $\Lambda$ sits on a torus $T''$ with dividing slope in $(\tb(L_{min})-1, \tb(L_{min}))$, then $T''$ bounds a solid torus which contains another torus $T'''$ with dividing slope $\tb(L_{min})-1$ and the ruling curves of slope $q/p$ on $T''$ are stablizations of those on $T'''$ (as can easily be seen by taking annuli between the ruling curves). Thus $\Lambda$ also sits on $T'''$, but this contradicts Lemma~\ref{lem:minimal} since $T'''$ is the boundary of a standard neighborhood of a Legendrian knot with Thurston-Bennequin invariant different from $L_{min}$. 

Now we inductively claim that each $T_i$ bounds a solid torus that contains (or is) a standard neighborhood of $L_{min}$. For $i=1$ the claim is true. Now, assume the claim is true for $i$. Now we can obtain $T_{i+1}$ by attaching a bypass to $T_i$. There are three possible non-trivial bypasses that can be attached to $T_i$: one changes the number of dividing curves, or one increases the slope of the dividing curves, and one decreases the slope. 

In the first case, the bypass can be attached from the outside or the inside (here, outside means that the bypass is outside the solid torus that $T_i$ bounds). If the bypass is attached from the outside, then $T_{i+1}$ bounds a solid torus that contains the solid torus that $T_i$ bounds and hence also contains a neighborhood of $L_{min}$. If the bypass were attached from the inside, then $T_{i+1}$ bounds a solid torus that contains a torus $T'_{i+1}$ isotopic to it such that $T_i$ and $T'_{i+1}$ cobound $T^2\times I$ with an $I$ invariant contact structure. Thus $T'_{i+1}$ bounds a solid torus containing a neighborhood of $L_{min}$ since $T_i$ does and $T_{i+1}$ bounds a solid torus containing the solid torus $T'_{i+1}$ bounds, completing the induction in this case. 

If the bypass increases the dividing slope, then it must be attached from the outside of $T_i$, so $T_{i+1}$ bounds a solid torus containing $T_{i}$, and we are done as above. 

Lastly, consider the case where the bypass decreases the dividing slope. That means the bypass is attached from inside of $T_i$, so the solid torus $S_i$ that $T_i$ bounds contains $T_{i+1}$. Let $s_i$ be the slope of the dividing curves on $T_i$. We consider two cases. First, suppose that $s_i$ is not an integer. According to \cite[Proposition 3.3]{Min24}, there is a unique solid torus $S'_i$ in $S_i$ up to contact isotopy with convex boundary having two dividing curves of slope $\lfloor s_i\rfloor$. By the induction hypothesis, there is a standard neighborhood $N_i$ of $L_{min}$ in $S_i$, and since $\tb(L_{min})\leq \lfloor s_i\rfloor$, we know that $N_i$ is inside of $S'_i$. Now since $T_{i+1}$ is obtained from $\partial S_i$ by a bypass attachment, its slope must be greater than or equal to $\lfloor s_i\rfloor$ and hence $S_{i+1}$ will also contain $N_i$, so we can let $N_{i+1}=N_i$. 

We are left to consider the case when $s_i$ is an integer. If $s_i=\tb(L_{min})$, then the slope of $T_{i+1}$ is also $s_i$ and the bypass attachment was trivial and did not decrease the dividing slope. So we can assume $s_i>\tb(L_{min})$. The solid torus $S_i$ is a standard neighborhood of a Legendrian knot $L''$, and since $S_i$ contains a standard neighborhood of $L_{min}$, we know that $L_{min}$ is a stabilization of $L''$. If we have to stabilize $L''$ both positively and negatively to obtain $L_{min}$, then one may easily check that no matter what $T_{i+1}$ is, the solid torus $S_{i+1}$ that it bounds will contain a neighborhood of $L_{min}$ (this is because it will contain a neighborhood of $S_+(L'')$ or $S_-(L)$ which both contain a neighborhood of $L_{min}$ in this case). However, if $L_{min}$ is obtained by stabilizing $L''$ with only one sign, say positively, then it is possible that attaching a bypass from the inside of $S_i$ will result in a torus that does not bound a solid torus containing a neighborhood of $L_{min}$. But if this is the case then $S_{i+1}$ does contain a neighborhood of $S_-(L'')$ and as argued above (in the first paragraph when discussing Item~(4)) we see that $\Lambda$ will sit on the boundary of the standard neighborhood of $S_-(L'')$. Thus $\Lambda$ is in $C((S_-(L''))_{n(p,q)})$. However, if one considers how $C((S_-(L''))_{n(p,q)})$ and $C((L_{min})_{n(p,q)})$ intersect, we see that $\Lambda$ cannot staisfy Conditions~(1) and~(2) in the proof of Lemma~\ref{lem:minimal} and thus $L_{min}$ is not a minimal underlying Legendrian knot for $\Lambda$. This contradiction implies that the $S_{i+1}$ does contain a neighborhood of $L_{min}$ as desired. 

We now notice that a standard neighborhood of $L'_{min}$ (bounded by $T'$) contains a standard neigbhorhood of $L_{min}$ (by our inductive hypothesis), so $L'_{min}$ must be isotopic to $L_{min}$. Thus $L_{min}$ is in the cone of $L$ and $L'$ finishing the proof. 
\end{proof}

\section{Integer lesser-sloped cables}
In this section we prove all the stated results about integer lesser-sloped cables. We start with Lemma~\ref{stabrelation}, which says that  the following relations between the stabilizations of the twisted $n$-copies hold
\[
S_{1,\pm}(T^t(nL)) = S_{2,\mp}\circ\cdots\circ S_{n,\mp} (T^{t-1}(nS_\pm(L)))
\]
and
\[
S_{1,\pm}(T^1(nL))=S_{2,\mp}\circ\cdots\circ S_{n,\mp} (nS_\pm(L)).
\]
\begin{proof}[Proof of Lemma~\ref{stabrelation}]
Recall that we form $T^t(nL)$ by taking $L$ and the union of $(n-1)$ ruling curves $L_2,\ldots, L_{n}$ of slope $\tb(L)-t$ on the boundary of a standard neighborhood $N(L)$ of $L$. Now let $N(S_\pm(L))$ be a standard neighborhood of $S_\pm(L)$ inside of $N(L)$. Note $S_\pm(L)$ together with $(n-1)$-curves $L'_2,\ldots, L'_n$ of slope $\tb(L)-t$, which is also equal to the slope $\tb(S_\pm(L))-(t-1)$, on $\partial N(S_\pm(L))$ is the $(t-1)$ twisted $n$-copy of $S_\pm(L)$. Moreover, we can take annuli between $L_i$ and $L'_i$ to see a bypass for $L_i'$. Thus the $L'_i$ are stabilizations of the $L_i$. As we know that rotation numbers of the $L_i'$ match that of $S_\pm(L)$ and those of $L_i$ match that of $L$, we see the signs of the stabilization are as claimed. 

The second relation between the $1$ twisted $n$-copy and the $n$-copy of a stabilization follows similarly, except to see the destabilizations of the we need to use \cite[Lemma~7.8]{EtnyreMinMukherjee22Pre}.
\end{proof}

Before proving the classification of integer lesser-sloped cables, we prove a useful lemma.
\begin{lemma}\label{forcedstabs}
Let $N$ be a standard neighborhood of a Legendrian knot $L$ and $L'$ a Legendrian divide on $\partial N$. Let $T$ be any convex torus inside of $N$ that is smoothly isotopic to $\partial N$, but has a dividing slope in $[\tb(L)-1, \tb(L))$. Then there is a sign $s$ such that any Legendrian curve on $T$ that is smoothly isotopic to $L'$ is Legendrian isotopic to a stabilization of $L'$, at least one of which is an $s$-stabilization. 

The same is true for any convex torus $T$ outside of $N$ that is smoothly isotopic to $\partial N$ but has dividing slope in $(\tb(L),\tb(L)+1]$.
\end{lemma}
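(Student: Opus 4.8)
The plan is to reduce everything to the analysis of a single convex annulus and then read off the sign from a rotation-number computation. Since $T$ lies inside $N$ and is smoothly isotopic to $\partial N$, it bounds a solid torus $N'\subset N$ with the same core, and $W=\overline{N\setminus N'}$ is a $T^2\times I$ with convex boundary whose dividing slope is $\tb(L)$ on the outer piece $\partial N$ and $s_0\in[\tb(L)-1,\tb(L))$ on $T$. A Legendrian curve $\gamma\subset T$ smoothly isotopic to $L'$ must have slope $\tb(L)$ on $T$ (the product identification of $T$ with $\partial N$ preserves slopes). Because $s_0\neq \tb(L)$, such curves exist and are precisely the ruling curves of slope $\tb(L)$ on $T$, and all of them are Legendrian isotopic, so it suffices to treat one fixed $\gamma$. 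I would note in passing that this is exactly why the hypothesis singles out $L'$ as a \emph{Legendrian divide}: on $\partial N$ the dividing slope is $\tb(L)$, so a slope-$\tb(L)$ Legendrian curve there is forced to be a dividing curve.

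First I would take a convex annulus $A\subset W$ with $\partial A=\gamma\cup L'$ (both boundary curves have slope $\tb(L)$ and so cobound an annulus in the product $W$), arranged to run vertically in the product structure so that the framing $A$ induces on each boundary curve is the surface framing of the torus containing it. Once $A$ is made convex, $\Gamma_A$ has $2\,|\tw(L',A)|$ endpoints on $L'$ and $2\,|\tw(\gamma,A)|$ endpoints on $\gamma$. Since $L'$ is a Legendrian divide, $\tw(L',\partial N)=0$, so there are no endpoints on $L'$; since $\gamma$ is a ruling curve, the endpoints on $\gamma$ number $\#(\gamma\cap\Gamma_T)=:2k'$ with $k'\geq 1$ (slope $\tb(L)$ meets the slope-$s_0$ dividing set nontrivially for $s_0\in[\tb(L)-1,\tb(L))$). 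After discarding closed components of $\Gamma_A$ (which may be assumed absent by tightness, possibly after an isotopy of $A$ rel $\partial A$), every arc of $\Gamma_A$ is boundary-parallel with both endpoints on $\gamma$. Each such arc is a bypass for $\gamma$ on the $\partial N$ side, and attaching these $k'$ bypasses destabilizes $\gamma$ down to $L'$. Hence $\gamma=S_+^{a}S_-^{b}(L')$ with $a,b\geq 0$ and $a+b=k'\geq 1$, which already proves $\gamma$ is a stabilization of $L'$.

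The hard part will be extracting a definite sign $s$, depending only on $T$, that is forced to appear among these stabilizations. Rather than tracking the signs of the bypasses on $A$ directly, I would pin down $a$ and $b$ by the classical invariants: from $\gamma=S_+^{a}S_-^{b}(L')$ one gets $a+b=\tb(L')-\tb(\gamma)=k'$ and $a-b=\rot(\gamma)-\rot(L')$. Since $a+b=k'\geq 1$, at least one of $a,b$ is positive, so setting $s=+$ when $\rot(\gamma)\geq\rot(L')$ and $s=-$ otherwise produces a sign with at least one $s$-stabilization. Because every admissible $\gamma$ is Legendrian isotopic to the fixed one, $\tb(\gamma)$ and $\rot(\gamma)$, and hence $s$, depend only on $T$, giving the uniform sign asserted. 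In the typical application $W$ is a single basic slice, or sits inside one, so all the stabilizations in fact share the sign of that slice; the computation above is the version that survives when $T$ is allowed to be arbitrary, and it is where I expect the only real care to be needed.

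Finally, for a convex torus $T$ outside $N$ with dividing slope in $(\tb(L),\tb(L)+1]$, $T$ bounds a solid torus $N'\supset N$ and $W=\overline{N'\setminus N}$ is again a $T^2\times I$, now with $\partial N$ as its inner boundary of slope $\tb(L)$. The curve $\gamma$ is once more a ruling curve of slope $\tb(L)$ on $T$, and the identical convex-annulus and rotation-number argument applies verbatim, yielding $\gamma=S_+^{a}S_-^{b}(L')$ with $a+b\geq 1$ and the same forced sign $s$.
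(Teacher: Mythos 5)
Your core mechanism is viable and genuinely different from the paper's. The paper pins the sign down geometrically: the region between $\partial N$ and $T$ lies inside a single basic slice (the one between $\partial N$ and a standard neighborhood of a stabilization of $L$), so a relative Euler class computation (via \cite[Proposition~4.22]{Honda00a} and \cite[Lemma~2.2]{EtnyreHonda05}) shows that the slope-$\tb(L)$ ruling curve on $T$ is $S_s^q(L')$ with \emph{all} stabilizations of the same sign $s$, the destabilization itself being delegated to \cite[Lemma~7.8]{EtnyreMinMukherjee22Pre}. You instead destabilize across a convex annulus by the imbalance principle and extract a sign from $a-b=\rot(\gamma)-\rot(L')$, which yields only ``at least one'' $s$-stabilization; since that weaker statement is all the lemma asserts, this route could in principle work and avoids the Euler class computation.

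The genuine gap is your reduction to a single curve: the claim that the Legendrian curves on $T$ smoothly isotopic to $L'$ ``are precisely the ruling curves of slope $\tb(L)$ on $T$, and all of them are Legendrian isotopic'' is false. A Legendrian curve on a convex torus may meet $\Gamma_T$ inefficiently, and such a curve is a \emph{stabilization} of a ruling curve, not Legendrian isotopic to one. This matters because the uniformity of $s$ over all curves on $T$ --- the entire force of the lemma, and exactly what its application in the proof of Theorem~\ref{maininteger} requires, since there it is applied to components that \emph{have} been stabilized --- is deduced in your proposal from this false claim. Indeed, your per-curve sign rule is not uniform: if the ruling curve is $\gamma_0=S_+(L')$, then $S_-^2(\gamma_0)=S_+S_-^2(L')$ is also realized on $T$ and has rotation number less than $\rot(L')$, so your rule assigns it the sign $-$ while assigning $\gamma_0$ the sign $+$. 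The repair is short but necessary: every Legendrian curve on $T$ in this smooth class destabilizes, on $T$, to a ruling curve $\gamma_0=S_+^aS_-^b(L')$ with $a+b\geq 1$; fix $s$ so that the exponent of $S_s$ in this expression is positive; then every curve in question is a further stabilization of $\gamma_0$, and since stabilizations commute it is a stabilization of $L'$ at least one of which is an $s$-stabilization. (This is precisely the role of the paper's closing sentence that any other curve on $T$ in this smooth isotopy class is a stabilization of the ruling curve.) A smaller issue: your parenthetical that closed components of $\Gamma_A$ ``may be assumed absent by tightness'' is too quick, since tightness only excludes contractible dividing curves, not core-parallel ones; this is one reason the paper cites \cite[Lemma~7.8]{EtnyreMinMukherjee22Pre} for the destabilization step rather than reproving it.
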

\begin{proof}
We will choose a framing on $L$ so that the dividing curves on $\partial N$ have slope $0$. Given $T$ as in the first paragraph, we know that there is a solid torus $N'$ inside of the torus that $T$ bounds that has dividing slope $-1$  and is a standard neighborhood of an $s$-stabilization of $L$. Suppose it is a $s$-stabilization. From Lemma~2.2 in \cite{EtnyreHonda05} we know that a dividing curve of slope $q/p$ on $\partial T$ has rotation number
\[
q\rot(\mu)+p\rot(\lambda)
\]
where $\mu$ is a Legendrian realization of the meridian on $T$ and $\lambda$ is a Legendrian realization of a curve isotopic to $L$ on $T$ (here we are using this longitude to measure slopes on $T$). 
Now by the proof of Proposition~4.22 in \cite{Honda00a} the region $R$ between $\partial N$ and $T$ will have relative Euler class $s\begin{bmatrix} p-1 & q\end{bmatrix}^T$ (all the signs of basic slices making up $R$ must be the same since $R$ is a subset of the region between $\partial N$ and $\partial N'$ which is a basic slice). Thus, the difference between the rotation numbers of $L$ and a ruling curve of slope $0$ on $T$ is $sq$ and hence the ruling curve will have rotation number $\rot(L)+s q$ (we note that $q>0$ since the slope of the dividing curves is in $[\tb(L)-1, \tb(L))$). Moreover, again by \cite[Lemma~7.8]{EtnyreMinMukherjee22Pre}, we know that this ruling curve must be a $q$-times $s$-stabilization of $L'$. Any other curves on $T$ in this smooth isotopic class is a stabilization of this ruling curve. 
\end{proof}

We now turn to the classification of integer lesser-sloped cables.
\begin{proof}[Proof of Theorem~\ref{maininteger}]
The claim that any Legendrian representative of $K_{n(1,q)}$ destabilizes to a $t$-twisted $n$-copy of a non-destabilizable Legendrian representative of $K$ follows directly from Proposition~7.8 in \cite{DaltonEtnyreTraynor2024}.

Let $\Lambda^1$ and $\Lambda^2$ be two Legendrian representatives of $K_{n(1,q)}$ that are not $n$-copies of a Legendrian $L_i$ with $\tb=q$. Let $\Lambda_1^i$ be the component of $\Lambda^i$ with the largest Thurston-Bennequin invariant (if there is more than one such component, then choose one). We will now show that $\Lambda^1$ and $\Lambda^2$ are Legendrian isotopic if and only if $\Lambda^1_1$ and $\Lambda^2_1$ are Legendrian isotopic and the other components of $\Lambda^1$ and $\Lambda^2$ can be paired to have the same Thurston-Bennequin invariant and rotation number. The forward implication is clear (if there are more than one component of the link with largest Thurston-Bennequin invariant then we need to know that these components can be permuted by a Legendrian isotopy, but this follows from the ordered classification in Theorem~\ref{integerlesserslopeordered}). 

For the reverse implication, we assume that $\Lambda^1_1$ and $\Lambda^2_1$ are Legendrian isotopic and the other components of $\Lambda^1$ and $\Lambda^2$ can be paired to have the same Thurston-Bennequin invariant and rotation number. After Legendrian isotopy, we can assume that $\Lambda_1^1=\Lambda_1^2$ and, following the proof of Proposition~7.8 in \cite{DaltonEtnyreTraynor2024}, the other components of $\Lambda^1$ sit on a convex torus $T^1$ bounding a standard neighborhood of $\Lambda^1_1$ while the components of $\Lambda^2$ sit on a convex torus $T^2$ bounding a standard neighborhood of $\Lambda^2_1$. In the case that $\tb(\Lambda_1^i)>q$, we see that $\Lambda^1$ is a stabilization of a $(q-\tb(\Lambda^1_1))$ twisted $n$-copy of $\Lambda^1_1$ and similarly for $\Lambda^2$. As twisted $n$-copies and stabilizations are well-defined, the result follows. 
For details check Lemma 5.11 in  \cite{DaltonEtnyreTraynor2024}.

Now if $\tb(\Lambda_1^i)<q$, then the components of $\Lambda^i$ destabilize to ruling curves of slope $q$ on $T^i$. Ruling curves of integer slope larger than the dividing slope are Legendrian isotopic to the core Legendrian in the neighborhood. One can see this by taking an annulus from a ruling curve to the core Legendrian knot. This annulus can be made convex, and one may easily argue that all the dividing curves run from one boundary component to the other, thus giving the Legendrian isotopy. Given this, we see that $\Lambda^1$ and $\Lambda^2$ are stabilizations of ruling curves on the boundary of a standard neighborhood of $\Lambda^1_1=\Lambda^2_1$ and hence are Legendrian isotopic. 

In the case that $\tb(\Lambda_1^i)=q$, we see that $\Lambda^i$ is a stabilization of an $n$-copy. We now consider stabilizations of the $n$-copy of a Legendrian $L_i$ with $\tb(L_i)=q$. We will show that if $i\not=j$, the result of stabilizing some of the components of $nL_i$ only positively and some only negatively will not be Legendrian isotopic to the same stabilizations of $nL_j$. More specifically, we will show that $\Lambda^i=S_{1,+}^k\circ S_{2,-}^l(2L_i)$ is not Legendrian isotopic to $\Lambda^2=S_{1,+}^k\circ S_{2,-}^l(2L_j)$, and the general result follows from this. For simplicity, we will take $i=1$ and $j=2$.

We note that $\Lambda^i$ sits on the boundary of a standard neighborhood $N_i$ of $L_i$, for $i=1,2$, since $2L_i$ does. Now, if $\Lambda^1$ is Legendrian isotopic to $\Lambda^2$ then after contact isotopy, we can assume that $\Lambda^1=\Lambda^2$. Now $\partial N_1$ is smoothly isotopic to $\partial N_2$ fixing $\Lambda^1$. Thus, by discretization of isotopy \cite{Colin97, Honda02},  there is a sequence of convex tori $T_1, \ldots, T_n$ such that $T_1=\partial N_1$, $T_n=\partial N_2$, each $T_i$ contains $\Lambda^1$, and $T_i$ is obtained from $T_{i-1}$ by a bypass attachment disjoint from $\Lambda$. 

We will inductively show that for each $i$ there are tori $T'_i$ and $T''_i$ such that $T'_i$ and $T''_i$ bound solid tori $S'_i$ and $S''_i$, respectively, such that $S''_i\subset S'_i$, $S'_i$ and $S''_i$ are standard neighborhoods of a knot isotopic to $L_1$, and $T_i\subset S'_i\setminus S''_i$. This will complete the proof as we will know that $N_2$ is a standard neighborhood of $L_1$, which implies that $L_1$ and $L_2$ are Legendrian isotopic. To see that the claim is true, we note that it is true for $i=1$ as we can take an $I$-invariant neighborhood of $T_1=\partial N_1$ and let $S'_1$ and $S''_1$ be the boundaries of this neighborhood. Now assume that the claim is true for all indices less than $i$. We first suppose that $T_i$ is obtained from $T_{i-1}$ by attaching a bypass from outside the torus that $T_{i-1}$ bounds. Clearly, we can take $S_i''$ to be $S_{i-1}''$ in this case. We claim that the slope of the dividing curves on $T_i$ is the same as the slope of the dividing curves on $T_{i-1}$. If this is true, then the dividing set on $T_i$ can only differ from that on $T_{i-1}$ by the number of dividing curves it has. Since our knot type is uniformly thick we know that the solid torus $T_i$ bounds is contained in a standard neighborhood of a Legendrian knot with maximal Thurston-Bennequin invariant. Thus, we know we can find a solid torus $S'_i$ outside of the solid torus $T_i$ bounds that has the same dividing slope as $T_i$ but only has two dividing curves. Since $S_i''$ and $S_i'$ both have two dividing curves with the same slope, the contact structure on the regions between them must be non-rotative and after possibly changing the characteristic foliation on $S'_i$ the contact structure is $I$-invariant. Thus the solid torus $S_i'$ that $T_i'$ bounds must also be a standard neighborhood of $L_1$ just as $S''_i$ is. 

We are left to see that the dividing slope on $T_i$ is the same as on $T_{i-1}$. Suppose $T_i$ has a different dividing slope. As we know how bypasses affect the slope of dividing curves, we know that the dividing slope on $T_i$ must be in $(\tb(L_1),\tb(L_1)+1]$. All the components of $\Lambda_1$ sit on the torus $T_i$, and by Lemma~\ref{forcedstabs}, each of them must have been stabilized with the same sign, but our hypothesis is that one component was stabilized only positively and the other was stabilized only negatively. Thus $T_i$ must have the same dividing slope as $T_{i-1}$. 

The argument in the case when a bypass is attached on the inside of the solid torus $T_{i-1}$ bounds is almost identical and left to the reader. 

We now consider that case when $\Lambda^1$ and $\Lambda^2$ are two Legendrian representatives of $K_{n(1,q)}$ such that $\Lambda_1$ is obtained from a $n$-copy of $L_i$ with $\tb(L_i)=q$ by stabilizing all the components positively some number of times (but not negatively) and $\Lambda_2$ is obtained by the same stabilizations of an $n$-copy of $L_j$ with $\tb(L_j)=q$. Let $t$ be the largest Thurston-Bennequin invariant of a component of $\Lambda_1$. The $n$-copy of $L_i$ sits on the boundary of a standard neighborhood $N$ of $L_i$ as Legendrian divides. Inside $N$ is a standard neighborhood $N'$ of the result $L'$ of $L_i$ after positively stabilizing until $\tb=t$. Let $L'$ be ruling curves of slope $(1,q)$ on $\partial N'$. Since $q$ is larger than the dividing slope on $N'$, the ruling curves are all isotopic to $L'$. Thus, since stabilization is well-defined, $\Lambda_1$ is isotopic to a stabilization of $L'$. The same argument shows that $\Lambda_2$ is a stabilization of ruling curves on the boundary of a standard neighborhood of its component $L''$ with the largest Thurston-Bennequin invariant. Now it is clear that $\Lambda_1$ and $\Lambda_2$ are Legendrian isotopic if and only if $L'$ and $L''$ are. 
\end{proof}

We now consider the classification of Legendrian integer lesser-sloped cables of the twist knot $K_{-2n}$. 

\begin{proof}[Proof of Theorem~\ref{integerex}]
All the statements in Theorem~\ref{integerex} follow directly from Theorem~\ref{maininteger} except for the statements about stabilizations of $m$-copies of $L^i_{(0,1)}$ where all components have been stabilized both positively and negatively. The argument is identical to the one given at the end of Theorem~\ref{maininteger}.
\end{proof}

\section{Non-integer lesser-sloped cables}
We will begin by proving Theorem~\ref{lesserknots} as a series of results in this section.  We begin with the maximal Thurston-Bennequin invariant representatives of non-integer lesser-sloped cable knots. The following proposition is the first part of Theorem~\ref{lesserknots}.

\begin{proposition}\label{prop1}
Let $K$ be a uniformly thick knot type and $p$ and $q$ relatively prime with $q/p<\tbb(K)$. Let $L_1, \ldots, L_k$ be the distinct Legendrian knots in $\mathcal{L}(K)$ with $\tb=\lceil q/p\rceil$. Every element in $\mathcal{L}(K_{(p,q)})$ destabilizes to $(L_i)^\pm_{(p,q)}$ for some $i$ and $\pm$.

We distinguish the $(L_i)^\pm_{(p,q)}$ as follows.
\begin{enumerate}
\item If $\rot(L_i)\not= \rot(L_j)$, then $(L_i)^\pm_{(p,q)}$ and $(L_j)^\pm_{(p,q)}$ are distinct. 
\item If $\rot(L_i)=\rot(L_j)$ but $S_\pm(L_i)\not=S_\pm(L_j)$, then $(L_i)^\pm_{(p,q)}$ and $(L_j)^\pm_{(p,q)}$ are distinct. 
\item If $\rot(L_i)=\rot(L_j)$ but Legendrian surgery in $L_i$ and $L_j$ yield distinct contact manifolds, then $(L_i)^\pm_{(p,q)}$ and $(L_j)^\pm_{(p,q)}$ are distinct. 
\item Otherwise, it is not clear if $(L_i)^\pm_{(p,q)}$ and $(L_j)^\pm_{(p,q)}$ are distinct. 
\end{enumerate} 
\end{proposition}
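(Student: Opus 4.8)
The plan is to follow the strategy of \cite{ChakrabortyEtnyreMin2024} for lesser-sloped cables, using the uniform thickness of $K$ to replace the detailed analysis of solid tori in the knot type that paper requires. Throughout I write $m=\lceil q/p\rceil=\tb(L_i)$, so that $q/p\in(m-1,m)$, and set $a=p\,\tb(L_i)-q=pm-q>0$. First I would establish the destabilization statement. Given $\Lambda\in\mathcal{L}(K_{(p,q)})$, realize it as a Legendrian divide on a convex cabling torus $T$ bounding a solid torus $S$ in the knot type $K$. Uniform thickness guarantees that $S$ is contained in a standard neighborhood of a maximal Thurston--Bennequin representative, so the dividing slope of $T$ is at most $\tbb(K)$; since $q/p<\tbb(K)$ we may thicken the complement of $S$ and apply the classification of tight contact structures on $T^2\times I$ \cite{Honda00a} to arrange that $\Lambda$ is a Legendrian divide on the slope-$q/p$ torus $T_\pm$ sitting inside a standard neighborhood $N_i$ of some $L_i$ with $\tb(L_i)=m$. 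The two possible signs of the basic slice $N_i\setminus N_{i,\pm}$ account for the decoration $\pm$, and any such $\Lambda$ visibly destabilizes to $(L_i)^\pm_{(p,q)}$.

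Next I would record the classical invariants. A Legendrian divide of slope $q/p$ on the minimal convex torus $T_\pm$ has $\tb=pq$, while the relative-Euler-class computation used in the proof of Lemma~\ref{forcedstabs} (that is, Lemma~2.2 of \cite{EtnyreHonda05}) gives
\[
  \rot\big((L_i)^\pm_{(p,q)}\big)=q\,\rot(L_i)\pm(p\,\tb(L_i)-q)=q\,\rot(L_i)\pm a.
\]
Since every $L_i$ has the same $\tb(L_i)=m$, the correction term $\pm a$ is identical for all of them, so if $\rot(L_i)\neq\rot(L_j)$ then the rotation numbers of $(L_i)^\pm_{(p,q)}$ and $(L_j)^\pm_{(p,q)}$ differ (as $q\neq0$). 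This proves item~(1).

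For item~(2) I would recover the core of the bounding solid torus. The torus $T_\pm$ bounds a solid torus $S_{T_\pm}$ (containing $N_{i,\pm}$) whose Legendrian core is isotopic to $S_\pm(L_i)$. A Legendrian isotopy from $(L_i)^\pm_{(p,q)}$ to $(L_j)^\pm_{(p,q)}$ extends to an ambient contact isotopy carrying $T_\pm$ to a convex torus of slope $q/p$ on which $(L_j)^\pm_{(p,q)}$ is a Legendrian divide; by uniqueness of such tori in the knot type --- which is precisely what uniform thickness together with convex surface theory \cite{Min24} supplies --- this image is contact isotopic, rel $\Lambda$, to the standard $T_\pm$ for $L_j$. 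Hence the cores agree, so $S_\pm(L_i)$ is Legendrian isotopic to $S_\pm(L_j)$, contrary to hypothesis, and the cables are distinct.

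Item~(3) is where the Legendrian-surgery technique enters, and I expect this to be the main obstacle. When $S_\pm(L_i)=S_\pm(L_j)$ the recovery in item~(2) no longer separates $L_i$ from $L_j$, since both $N_i$ and $N_j$ then contain $\Lambda$ on the same torus $T_\pm$. Instead I would perform Legendrian surgery on $\Lambda=(L_i)^\pm_{(p,q)}$ itself: as $\tb(\Lambda)=pq$ differs from the cabling (torus) framing of the $(p,q)$-curve by $pq$, contact $(-1)$-surgery on $\Lambda$ is a $(-1)$-surgery with respect to the cabling framing, which modifies only the solid torus $S_{T_\pm}$. The hard part will be proving the requisite contact cabling/surgery formula --- obtained by decomposing the surgered solid torus via convex surface theory --- showing that this operation recovers, up to a fixed contactomorphism-invariant connected summand, the contact manifold obtained by Legendrian surgery on the \emph{core} $L_i$ (the delicate point being that the $(-1)$-framed surgery on the slope-$q/p$ divide resolves into surgery on $L_i$ rather than on $S_\pm(L_i)$). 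Granting this, a Legendrian isotopy between $(L_i)^\pm_{(p,q)}$ and $(L_j)^\pm_{(p,q)}$ would yield a contactomorphism of the two surgered manifolds, contradicting the assumption that Legendrian surgery on $L_i$ and on $L_j$ produce distinct contact manifolds; case~(4) is left open precisely because no such finer invariant is available there.
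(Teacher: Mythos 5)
Your destabilization step and items (1)--(2) run broadly parallel to the paper's argument, but item (3) --- exactly the point you flag as the main obstacle --- has a genuine gap, and the approach you sketch there would not close it. You propose to perform contact $(-1)$-surgery on the cable $\Lambda=(L_i)^\pm_{(p,q)}$ itself and invoke a ``contact cabling/surgery formula'' asserting that this recovers, up to a fixed connected summand, the result of Legendrian surgery on the core $L_i$. No such formula is established anywhere, and as stated it cannot hold in general: since $\tb(\Lambda)=pq$, contact $(-1)$-surgery on $\Lambda$ is smoothly $(pq-1)$-surgery on the cable, which by the classical cabling formula for Dehn surgery equals $(pq-1)/p^2$-surgery on $K$, whereas Legendrian surgery on $L_i$ is $\left(\lceil q/p\rceil-1\right)$-surgery on $K$; these have different first homology in general, and the former is typically irreducible, so it is not the latter connect-summed with any fixed piece. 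The paper's proof avoids surgery on the cable altogether: it shows that Legendrian surgery on the core $L_i$ can be reassembled from the complement of the solid torus $S_i$ bounded by the cabling torus $T_i$ by gluing to $\partial S_i$ a fixed universally tight solid torus with meridian slope $\tb(L_i)-1$ (the surgery solid torus is split as $A\cup S'$ with $A\cong N_i\setminus S_i$, the signs of the Farey-graph path chosen to match the sign $\pm$ of the cable). Then the chain is: cables Legendrian isotopic $\Rightarrow$ the tori $T_i,T_j$ contact isotopic $\Rightarrow$ complements of $S_i,S_j$ contactomorphic $\Rightarrow$ Legendrian surgeries on $L_i$ and $L_j$ contactomorphic, contradicting the hypothesis. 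You would need either this reconstruction or an actual proof of a statement letting you pass from a contactomorphism of the cable-surgered manifolds to one of the core-surgered manifolds; neither is supplied.

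A second gap feeds into both items (2) and (3): you assert ``uniqueness of such tori in the knot type'' to conclude that the ambient image of $T_\pm$ is contact isotopic rel $\Lambda$ to the standard torus for $L_j$. This is precisely the paper's Lemma~\ref{different} (a Legendrian divide on $T$ cannot be Legendrian isotopic to one on $T'$ unless $T$ and $T'$ are contact isotopic), and it is not a formal consequence of uniform thickness plus \cite{Min24}; the paper proves it by a discretization-of-isotopy argument, as at the end of the proof of Theorem~\ref{maininteger}, controlling every intermediate bypass attachment. Finally, two smaller slips: an arbitrary element of $\mathcal{L}(K_{(p,q)})$ is not a Legendrian divide on a cabling torus (only maximal-$\tb$ representatives are; in general one first destabilizes to a ruling curve and then uses an annulus to a disjoint torus supplied by uniform thickness, as in \cite[Lemma~7.8]{EtnyreMinMukherjee22Pre}), and the rotation number formula is $\rot((L_i)^\pm_{(p,q)})=p\rot(L_i)\pm(p\tb(L_i)-q)$, not $q\rot(L_i)\pm(p\tb(L_i)-q)$ --- harmless for item (1) since $p\neq 0$, but incorrect as written.
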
 
\begin{proof}
Given any Legendrian $L$ in $\mathcal{L}(K_{(p,q)})$ we know that its Thurston-Bennequin invariant is $\leq pq$ by \cite{EtnyreHonda05}. We wish to show that if it is $<pq$, then it destabilizes. Since the framing of $L$ given by a torus it sits on that bounds a solid torus in the knot type of $K$ is $pq$, we see that $L$ sits on a convex torus $T$ that bounds a solid torus in the knot type of $K$. If $L$ is not a ruling curve on $T$, then we can use the bypasses on $T$ to destabilize $L$. So, after destabilizing $L$ we can assume it is a ruling curve on $T$. By uniform thickness, there is a torus $T'$ disjoint from $T$ that bounds a solid torus containing $T$ or contained in the solid torus $T$ bounds. Now we can use an annulus from a Legendrian dividing curve on $T'$ to $L$ to see that $L$ destabilizes to a standard cable of $L\in \mathcal{L}(K)$, see \cite[ Lemma~7.8]{EtnyreMinMukherjee22Pre}. 

We now consider the case that $\rot(L_i)\not= \rot(L_j)$. Lemma~3.8 in \cite{EtnyreHonda05} computes the rotation number of standard cables. In particular, we see that 
\[
\rot((L_i)^\pm_{(p,q)})=p\rot(L_i)\pm (p\tb(L_i) -q)
\]
and similarly for $(L_j)^\pm_{(p,q)}$. Given that $q/p\in(\tb(L_i)-1, \tb(L_i))$, we see that the cables have distinct rotation numbers and hence are distinct. 

For the other statements, we first note a simple lemma.
\begin{lemma}\label{different}
Suppose $K$ is a uniformly thick knot type. Suppose that $T$ and $T'$ are two convex tori bounding solid tori $S$ and $S'$ in the knot type $K$, and they both have two dividing curves of slope $q/p$. If $T$ and $T'$ are not contact isotopic (after arranging their characteristic foliations are the same), then a Legendrian divide on $T$ is not Legendrian isotopic to a Legendrian divide on $T'$. 
\end{lemma}
We will prove this lemma later, but first, see how the proposition follows. Recall that $(L_i)^\pm_{(p,q)}$ is constructed as follows: we take a standard neighborhood $N_i$ of $L_i$ and inside it, we take a standard neighborhood $N'_i$ of $S_\pm(L_i)$, in $N_i\setminus N'_i$ we can find a convex torus $T_i$ with dividing slope $q/p$ and $2$ dividing curves. The knot $(L_i)^\pm_{(p,q)}$ is a Legendrian divide on this torus. We similarly can build $(L_j)^\pm_{(p,q)}$ on a convex torus $T_j$. We will show that $(L_i)^\pm_{(p,q)}$ is not Legendrian isotopic to $(L_j)^\pm_{(p,q)}$. 

In Case~(2) we are assuming that $S_\pm(L_i)\not=S_\pm(L_j)$. By the classification of tight contact structures on the solid torus \cite{Honda00a} we know that inside of the solid torus $S_i$ that $T_i$ bounds there is a unique convex torus $T'_i$ with two dividing curves of slope $\tb(L_i)-1$ and this torus bounds a standard neighborhood $S'_i$ of $S_\pm(L_i)$. We have the analogous tori for $T_j$. If $T_i$ is contact isotopic to $T_j$ then $S_i$ will be contact isotopic to $S_j$ and hence, by the uniqueness of the $T'_i$ and $T'_j$ we see that the standard neighborhoods of $S_\pm(L_i)$ and $S_\pm(L_j)$ are contact isotopic, which in turn implies $S_\pm(L_i)$ is Legendrian isotopic to $S_\pm(L_j)$. This contradiction implies $T_i$ is not contact isotopic to $T_j$, and hence by Lemma~\ref{different}, $(L_i)^\pm_{(p,q)}$ is not Legendrian isotopic to $(L_j)^\pm_{(p,q)}$.

In Case~(3) we are assuming Legendrian surgery in $L_i$ and $L_j$ yields distinct contact manifolds. Recall that we perform Legendrian surgery on $L_i$ by removing its standard neighborhood $N_i$ and gluing in a solid torus $S$ with meridian $\tb(L_i)-1$ and supporting its unique tight contact structure. Notice that we can break $S$ into two pieces: $S'$, a solid torus with dividing slope $q/p$, and a thickened torus $A$ with boundary slopes $q/p$ and $\tb(L)$. These can be chosen so that the result of gluing $A$ to the complement of $N_i$ is the complement of the solid torus $S_i$ that $T_i'$ bounds, to do this we choose the contact structure on $A$ determined by a path in the Farey graph with all $\pm$ depending on whether we are considering $(L_i)^+_{(p,q)}$ or  $(L_i)^-_{(p,q)}$. Thus, we can alternatively perform this surgery on $L_i$ by removing the torus $S_i$ and gluing in a solid torus with meridian $\tb(L_i)-1$ and with the universally tight contact structure given by a path in the Farey graph consisting of all $\pm$ signs. If $T_i$ and $T_j$ are contact isotopic, then the complements of $S_i$ and $S_j$ are contactomorphic, and thus, gluing in the above-mentioned torus to both these complements will yield contactomorphic manifolds. Thus, Legendrian surgery on $L_i$ and $L_j$ would be contactomorphic. Since these surgeries are assumed to be distinct, the tori $T_i$ and $T_j$ are not contact isotopic and the standard cables are distinct by the lemma above. 
\end{proof}
We now prove the above lemma.
\begin{proof}[Proof of Lemma~\ref{different}]
The proof uses a discretization of isotopy argument that is almost identical to the end of the proof of Theorem~\ref{maininteger}. 
\end{proof}

We now move on to determining when stabilizations of standard lesser-sloped cables become Legendrian isotopic. 
\begin{proposition}\label{prop2}
Let $K$ be a uniformly thick knot type and $p$ and $q$ relatively prime with $q/p<\tbb(K)$. Let $L_1, \ldots, L_k$ be the distinct Legendrian knots in $\mathcal{L}(K)$ with $\tb=\lceil q/p\rceil$. Every element in $\mathcal{L}(K_{(p,q)})$ destabilizes to $(L_i)^\pm_{(p,q)}$ for some $i$ and $\pm$.

We distinguish stabilizations of the $(L_i)^\pm_{(p,q)}$ as follows.
\begin{enumerate}
\item Stabilizations of $(L_i)^+_{(p,q)}$ and $(L_i)^-_{(p,q)}$ remain distinct until the first knot has been  $-$-stabilized $|p\tb(L)-q|$ or more times, the second knot has been $+$-stabilized $|p\tb(L)-q|$ or more times, and the classical invariants match. In this case they are Legendrian isotopic.
\item If $S_+(L_i)=S_-(L_j)$, then $(L_i)^+_{(p,q)}$ and $(L_j)^-_{(p,q)}$ will have distinct rotation numbers until the first has been $+$-stabilized $|p(\tb(L)-1)-q|$ times and the second has been $-$-stabilized $|p(\tb(L)-1)-q|$ times. After each has been stabilized as indicated, they become Legendrian isotopic. 
\item If $S_\pm(L_i)=S_\pm(L_j)$, but Legendrian surgery on $L_i$ and $L_j$ yield distinct contact manifolds, then stabilizations of $(L_i)^\pm_{(p,q)}$ and $(L_j)^\pm_{(p,q)}$ remain distinct until each has been $\pm$-stabilized $|p(\tb(L)-1)-q|$ or more times, and the classical invariants are the same, in this case they are Legendrian isotopic.
\item If $S^l_\pm(L_i)$ is not Legendrian isotopic to $S^l_\pm(L_j)$ for any $l$, then any number of $\pm$-stabilizations of $(L_i)^\pm_{(p,q)}$ and $(L_j)^\pm_{(p,q)}$ remain distinct.
\end{enumerate} 
\end{proposition}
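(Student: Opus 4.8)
The destabilization statement is exactly Proposition~\ref{prop1}, so the plan is to prove only the four claims comparing stabilizations, and throughout I will use the geometric model from the proof of Proposition~\ref{prop1}: $(L_i)^\pm_{(p,q)}$ is a Legendrian divide on a convex torus $T_i^\pm$ of slope $q/p$ with two dividing curves, sitting inside the $\pm$ basic slice $N_i\setminus N(S_\pm(L_i))$ bounded by $\partial N_i$ (slope $\tb(L_i)$) and $\partial N(S_\pm(L_i))$ (slope $\tb(L_i)-1$). Each such cable has $\tb=pq$, and by \cite[Lemma~3.8]{EtnyreHonda05} its rotation number is $p\rot(L_i)\pm(p\tb(L_i)-q)$. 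The two tools I would lean on are: the rotation number of a stabilization changes by $\pm1$ of the corresponding sign; and, by Lemma~\ref{different}, two Legendrian divides are Legendrian isotopic only if they lie on contact isotopic convex tori bounding solid tori in the knot type $K$.

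For the ``become isotopic'' direction I would first record two geometric identities obtained by sliding the divide across a basic slice. Pushing $T_i^+$ outward to $\partial N_i$ (across the positive basic slice between them) and pushing $T_i^-$ outward to $\partial N_i$ (across the negative basic slice) exhibits
\[
S_-^{|p\tb(L_i)-q|}\big((L_i)^+_{(p,q)}\big) \quad\text{and}\quad S_+^{|p\tb(L_i)-q|}\big((L_i)^-_{(p,q)}\big)
\]
as ruling curves of slope $q/p$ on $\partial N_i$; a rotation number count (both equal $p\rot(L_i)$) together with the fact that all ruling curves of a fixed slope on a fixed convex torus are Legendrian isotopic shows these two knots are Legendrian isotopic. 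This gives item~(1): before these stabilization counts the rotation numbers differ (the initial gap is $2(p\tb(L_i)-q)$ and closes by one per stabilization of the indicated sign), and once both cables have been stabilized as stated and their classical invariants agree they are isotopic. Dually, pushing $T_i^+$ inward to $\partial N(S_+(L_i))$ realizes $S_+^{|p(\tb(L_i)-1)-q|}\big((L_i)^+_{(p,q)}\big)$ as the standard cable of $S_+(L_i)$ on $\partial N(S_+(L_i))$, i.e. a ruling curve of slope $q/p>\tb(S_+(L_i))$, a greater-sloped cable in the sense of \cite{ChakrabortyEtnyreMin2024}. Under the hypothesis $S_+(L_i)=S_-(L_j)$ the symmetric reduction of $(L_j)^-_{(p,q)}$ produces the same greater-sloped cable, which yields item~(2), again with the rotation number gap (now $2|p(\tb(L_i)-1)-q|$) providing the sharp lower bound.

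For the ``remain distinct'' direction in items~(3) and~(4) the rotation numbers no longer separate the cables, so I would instead propagate the obstruction coming from the ambient tori, exactly as in the proof of Proposition~\ref{prop1}. In item~(3) the tori $T_i^\pm$ and $T_j^\pm$ are not contact isotopic because a suitable thickened torus glued to their complements realizes Legendrian surgery on $L_i$ and $L_j$, which are assumed to give non-contactomorphic manifolds; a discretization of isotopy argument \cite{Colin97,Honda02} as in Lemma~\ref{different} then shows their stabilizations cannot be placed on contact isotopic tori until the stabilization count reaches $|p(\tb(L_i)-1)-q|$, at which point the previous paragraph merges them into the common greater-sloped cable of $S_\pm(L_i)=S_\pm(L_j)$. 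For item~(4), any Legendrian isotopy between stabilized cables would carry the solid torus that one bounds to the solid torus the other bounds, hence would identify the underlying stabilized knots $S^l_\pm(L_i)$ and $S^l_\pm(L_j)$; since these are never Legendrian isotopic, the stabilized cables remain distinct for every $l$.

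The main obstacle I expect is making the two sliding identities precise with the correct signs and, above all, proving that the stated stabilization counts are sharp, namely that the cables are genuinely distinct strictly below the threshold and genuinely isotopic at it. This requires careful sign bookkeeping for basic slices (the signs of the stabilizations produced when the divide crosses $\partial N_i$ or $\partial N(S_\pm(L_i))$) together with the rotation number computation of \cite{EtnyreHonda05} to confirm both the lower bounds and that equality is achieved; it also requires checking that the local greater-sloped cable relations $S_\pm^p(M_{(p,q)})=(S_\pm M)_{(p,q)}$ and the coincidence statements of \cite{ChakrabortyEtnyreMin2024} apply here even though $q/p$ need not exceed $\lceil w(K)\rceil$, which is fine because those relations are proved inside a single solid torus and do not use the global width hypothesis.
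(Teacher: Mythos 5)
Your treatment of the ``become isotopic'' half of the proposition matches the paper's proof in substance: the paper also realizes the common stabilizations by taking an annulus from the Legendrian divide $(L_i)^\pm_{(p,q)}$ to a ruling curve of slope $q/p$ on $\partial N_i$ (for item (1)) or on the boundary of a standard neighborhood of $S_\pm(L_i)$ (for items (2) and (3)), citing \cite[Lemma~7.8]{EtnyreMinMukherjee22Pre}, and pins down the signs and counts by the rotation number formula $\rot((L_i)^\pm_{(p,q)})=p\rot(L_i)\pm(p\tb(L_i)-q)$. That part of your proposal is correct and is essentially the paper's argument.

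The genuine gap is in the ``remain distinct'' half of items (3) and (4), which is the technical heart of the paper's proof and which you assert rather than prove. You cannot invoke Lemma~\ref{different} (even ``as in'' its proof) for stabilized cables: that lemma concerns Legendrian \emph{divides} on convex tori with two dividing curves of slope $q/p$, and a stabilized cable is not a divide, nor does a Legendrian isotopy of the knots automatically carry a torus containing one onto a torus containing the other --- your item (4) sentence ``any Legendrian isotopy between stabilized cables would carry the solid torus that one bounds to the solid torus the other bounds'' is precisely the statement that has to be established, not a step one may assume. What the paper actually does is: after an ambient contact isotopy making the two stabilized cables equal, it discretizes the smooth isotopy between the tori $\Sigma'_i\subset N_i\setminus N'_i$ and $\Sigma'_j\subset N_j\setminus N'_j$ on which they sit, and then runs an induction showing every intermediate convex torus $T_k$ bounds a solid torus containing a convex torus inside $N_i\setminus N'_i$. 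It is exactly in this induction that the hypothesis on the number of stabilizations enters: when the dividing slope of $T_k$ lies in $(\tb(L_i)-1,\tb(L_i))$, the next torus cannot have slope $\tb(L_i)-1$ because it would then have to be $\partial N'_i$, and any slope-$q/p$ Legendrian on $\partial N'_i$ is at least a $|p\tb(L_i)-q|$-fold stabilization of the standard cable, contradicting the assumed bound on stabilizations; and when $T_k$ has integer slope with two dividing curves (so bounds a standard neighborhood of some $L'$ which may contain solid tori missing $N_i$), a rotation-number computation forcing $L_i=S^m_\pm(L')$ is needed to exclude the bad bypasses. None of this tracking argument, nor the place where the threshold $|p(\tb(L)-1)-q|$ is actually used as a hypothesis, appears in your proposal --- you yourself flag the sharpness of the thresholds as ``the main obstacle,'' which is an accurate diagnosis, but the proposal leaves precisely that obstacle unresolved.
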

\begin{proof}
Recall in the proof of Proposition~\ref{prop1} we recalled that 
\[
\rot((L_i)^\pm_{(p,q)})=p\rot(L_i)\pm (p\tb(L_i) -q).
\]
Thus, for $(L_i)^+_{(p,q)}$ and  $(L_i)^-_{(p,q)}$ to even have the same rotation numbers (which is necessary for them to be Legendrian isotopic), one must perform the stabilizations indicated in the proposition. Now consider the $(p,q)$-cable $L$ of $K$ consisting of a ruling curve of slope $q/p$ on the boundary of a standard neighborhood of $L_i$. One may use the annulus between $(L_i)^\pm_{(p,q)}$ and $L$ to see that $L$ is obtained by stabilizing $(L_i)^\pm_{(p,q)}$ $|p\tb(L)-q|$ times (see Lemma~7.8 in \cite{EtnyreMinMukherjee22Pre}). We may do the same for  $(L_i)^-_{(p,q)}$, and as these stabilizations are the same, they must be the ones claimed in Item~(1) of the proposition. 

For Item~(2) in the proposition, we note that the ruling curves of slope $(p,q)$ on a standard neighborhood of $S_+(L_i)=S_-(L_j)$ is a stabilization of both $(L_i)^+_{(p,q)}$ and $(L_j)^-_{(p,q)}$ by an argument similar to the one above. Thus, those common stabilizations are isotopic, but fewer stabilizations will have distinct rotation numbers. One may easily compute that the claimed number of stabilizations is correct. 

For Item~(3) in the proposition, we note that if we look at the $(p,q)$-cable $L'$ consisting of a $q/p$ ruling curve on the boundary of a standard neighborhood of $S_\pm(L_i)=S_\pm(L_j)$, then $L'$ is obtained from $(L_i)^\pm_{(p,q)}$ and from $(L_j)^+_{(p,q)}$ by $|p(\tb(L)-1)-q|$ $\pm$-stabilizations (just as discussed above). Thus, the claimed stabilizations of the knots are Legendrian isotopic.  

We now assume that $(L_i)^\pm_{(p,q)}$ and $(L_j)^\pm_{(p,q)}$ have not been $\pm$-stabilized at least $|p\tb(L)-q|$ times (but could be stabilized any number of times with the opposite sign). Let these Legendrian knots be denoted $L'_i$ and $L'_j$, respectively. We now argue that $L'_i$ and $L'_j$ cannot be Legendrian isotopic. Let $\Sigma'_i$ be a convex torus containing $L'_i$ and similarly  $\Sigma'_j$ containing $L'_j$. As above, we will let $N_i$ be a standard neighborhood of $L_i$ and $N'_i$ a standard neighborhood of $S_\pm(L_i)$ and similarly for $N_j$ and $N'_j$. 
We can assume that $\Sigma'_i$ is in $N_i-N'_i$ and $\Sigma'_j$ is in $N_j-N_J'$ (since the links originally sat on such a torus and the stabilization can be done by having the link inefficiently intersecting the dividing set). If we assume that $L'_i$ and $L'_j$ are Legendrian isotopic, then after a global contact isotopy, we can assume that $L'_i=L'_j$. Note there is a smooth isotopy from $\Sigma'_i$ to $\Sigma'_j$ that is fixed on $L_i'$. Thus, by discretization of isotopy \cite{Honda02}, we can assume there are tori $T_1=\Sigma'_i, T_2, \ldots, T_l=\Sigma'_j$ such that all the $T_k$ contain $L_i'$ and $T_k$ is obtained from $T_{k-1}$ by a bypass attachment. We will inductively prove that for all $k$, the torus $T_k$ bounds a solid torus that contains a convex torus $T'_k$ that is contained in $N_i-N'_i$. Once we have shown this, we will obtain a contradiction since $T'_l$ will be in $N_i-N'_i$ by induction and inside of $N_j-N'_j$ by construction (since it will have dividing slope $(tb(L_i)-1,\tb(L_i))$ and is contained in the solid torus $\Sigma'_j=T_l$ bounds), but this would imply that Legendrian surgery on $L_i$ and $L_j$ would be contactomorphic (as we argued at the end of the proof of Proposition~\ref{prop1}). 

Notice that $T_1=\Sigma'_i$ clearly satisfies the inductive hypothesis. We now assume that $T_k$ does too. If $T_{k+1}$ is obtained from $T_k$ by attaching a bypass outside of the solid torus that $T_k$ bounds, then we can take $T'_{k+1}=T'_k$. 

If the bypass were attached from the inside of the torus, then we need to consider several cases. In the first case, we assume that the dividing slope on $T_k$ is in $(\tb(L_i)-1,\tb(L_i))$. 
In this case, we first argue that $T_{k+1}$ cannot have dividing slope $\tb(L_i)-1$. If it did, then it would have to have two dividing curves (since for the slope of dividing curves to change by a bypass attachment, there can only be two), and since the solid torus bounded by $T_k$ contains $T_k'$ and hence $N'_i$ we know that $T_{k+1}$ must be $\partial N'_i$ (since this is the only torus of slope $\tb(L_i)-1$ in the solid torus bounded by $T_k$). Thus we have $L'_i$ sitting on $\partial N'_i$, but we saw above that a ruling curve of slope $q/p$ on $\partial N'_i$ will be $L^\pm_{(p,q)}$ after $|p\tb(L)-q|$ $\pm$-stabilizations and any Legendrian curve of slope $q/p$ on this torus will be a further stabilization. Since we are assuming that $L'_i$ has not been stabilized this many times, $T_{k+1}$ cannot have slope $\tb(L_i)-1$. Since $T_k$ has slope in $(\tb(L_i)-1,\tb(L_i))$, the dividing slope of $T_{k+1}$ cannot be less than $\tb(L_i)-1$ since there must be an edge in the Farey graph from the slope of $T_k$ and the slope of $T_{k+1}$ (and the edge between $\tb(L_i)$ and $\tb(L_i)-1$ prevents the existence of such an edge). This finished our claim in this case. 

We now assume that the dividing slope $s_k$ of $T_k$ is greater than or equal to $\tb(L_i)$. If $s_k$ is not an integer, then the solid torus $S_k$ that $T_k$ bounds contains $N_i$. We note by the classification of tight contact structure on solid tori that there is a unique solid torus $S_k'$ with convex boundary having dividing slope $\lfloor s_k\rfloor$ inside of $S_k$ and $N_i$ will be inside of $S_k'$ (it might be $S_k'$). Now when we attach a bypass to $T_k$ from inside $S_k$ to get $T_{k+1}$, the new dividing slope must be greater than or equal to $\lfloor s_k\rfloor$ and thus $T_{k+1}$ also bounds a solid torus containing $N_i$, and thus there is an obvious choice for $T'_{k+1}$. 

We are left to consider the case when $s_k\geq \tb(L_i)$ is an integer. If $T_k$ has more than two dividing curves, then a bypass attachment does not change the slope of the dividing curves, so we can take $T'_{k+1}=T'_k$. If $T_k$ has two dividing curves, then the solid torus $S_k$ it bounds is a standard neighborhood of a Legendrian knot $L'$. Now, $S_k$ can contain solid tori that do not contain $N_i$. To see this we first notice that $L_i=S^m_\pm(L')$, for some $m$, since if not, then one could check that the ruling curve on $T_k$ could not be obtained from $(L_i)^\pm_{(p,q)}$ by stabilization and hence any curve on $T_k$ would be a further stabilization of the ruling curve and could therefore not be $L_i'$. Indeed, notice that a ruling curve on $T_k$ would have to be a $|p(\tb(L_i)+m)-q|$ stabilization of $(L_i)^\pm_{(p,q)}$, but the rotation number of $(L_i)^\pm_{(p,q)}$ is $p\rot(L_i)\pm (p\tb(L_i)-q)$ and the rotation number of a ruling curve would be $p\rot(L')$. If $L_i=S^m_\pm(L')$, then $p\rot(L')=p(\rot(L_i)\mp m)$ which can be obtained from $p\rot(L_i)\pm (p\tb(L_i)-q)$ by $|p(\tb(L_i)+m)-q|$ stabilizations, but if $L_i$ is any other stabilization of $L'$ this  is not possible. (We note that this argument is similar to the claim that negative torus knots are Legendrian simple in \cite{EtnyreHonda01b}) Going back to the claim that $S_k$ contains solid tori that do not contain $N_i$ we note that the standard neighborhood of $S_\mp(L')$ is in $S_k$ and does not contain $N_i$. So it is possible that when we attach a bypass to $T_k$ to obtain $T_{k+1}$ that $T_{k+1}$ does not contain $N_i$, but the argument above says that if the dividing slope of $T_{k+1}$ is integral then this cannot happen. A slightly more involved computation similar to the one above implies this also cannot happen if the dividing slope is non-integral. Thus, we can choose $T_{k+1}'$ as desired. 

The proof of Item~(4) is very similar to the proof of Item~(3) and is left to the reader.
\end{proof}
Our main theorem about lesser-sloped cable knots follows from the above propositions. 
\begin{proof}[Proof of Theorem~\ref{lesserknots}]
The theorem is exactly the union of statements in Proposition~\ref{prop1} and Proposition~\ref{prop2}.
\end{proof}

We now consider applying Theorem~\ref{lesserknots} to classify non-integer lesser-sloped cable knots of negative twist knots. Specifically, we justify Figure~\ref{fig:cablemr} as the mountain range of $(p,q)$-cables of the twist knot $T_{-2n}$ when $-q/p<0$. 

\begin{proof}[Proof of Theorem~\ref{knotcable}]
Let $K$ be the $-2n$-twist knot, $-q/p\in(-m-1,-m)$ for $m\geq0$ an integer, and $k=\lceil\frac n2\rceil$. A complete, non-overlapping list of Legendrian knots in $\mathcal{L}(K)$ with $\tb=\lceil q/p \rceil=-m$ is given in \cite{EtnyreNgVertesi13} and discussed in the introduction. Specifically, it consists of 
\[
L^l_1,\ldots, L^l_k, L_1,\ldots, L_m, L^r_1,\ldots, L^r_k
\]
where $\rot(L^l_i)=-m-1$, $\rot(L^r_i)=m+1$, and $\rot(L_i)=-m-1 + 2i$ for $i=1,\ldots, m$.

From Proposition~\ref{prop1} we know that all non-destabilizable representatives of $K_{(p,-q)}$ come as standard cables of these Legendrian knots, and each of the above Legendrian knots gives us two standard $(p,-q)$-cables:
\[
(L^l_1)^\pm_{(p,-q)},\ldots, (L^l_k)^\pm_{(p,-q)}, (L_1)^\pm_{(p,-q)},\ldots, (L_m)^\pm_{(p,-q)}, (L^r_1)^\pm_{(p,-q)},\ldots, (L^r_k)^\pm_{(p,-q)}
\]
Since these are standard, lesser-sloped cables, we know that $\tb=-pq$. 
From the formula for the rotation number of cables given in the proof of Proposition~\ref{prop1} we see that
\begin{align*}
\rot&((L^l_j)^\pm_{(p,-q)}) = -p(m+1) \pm (q-p), \\
\rot& ((L_j)^\pm_{(p,-q)})= p(m+1-2i) \pm (q-p), \text{ and } \\
\rot&((L^r_1)^\pm_{(p,-q)}) = p(m+1) \pm (q-p),\\
\end{align*}
for $j=1, \ldots, k$, and $i=1,\ldots, m$. We know that the $(L^l_j)^\pm_{(p,-q)}$ are distinct by Proposition~\ref{prop1} since \cite{WanZhou24pre} tells us that Legendrian surgery on the $L^l_j$ yield distinct contact manifolds. We note that there is a contactomorphism $\phi$ of the standard tight contact structure $\ker(dz-y\, dx)$ on $\R^3$ (and hence $S^3$) of the form $(x,y,z)\mapsto (-x,-y,z)$. One may easily check that $\phi(L^r_j)$ is taken to $\phi(L^l_j)$ with its reversed orientation (of course, this is with the correct indexing on the knots). Thus, the contact structures on the complement of $L^r_j$ and $L^l_j$ are contactomorphic after reversing the orientation on one of the plane fields. This shows that Legendrian surgery on $L^r_j$ and $L^l_j$ are contactomorphic after reversing the orientation on one of the plane fields. Thus Legendrian surgeries on all the $L^r_j$ yield different contact structures and hence Proposition~\ref{prop1} again tells us that all the $(L^r_j)^\pm_{(p,-q)}$ are distinct. 

Items~(1) and~(2) in Proposition~\ref{prop2} show that the stabilizations of the $(L_j)^\pm_{(p,-q)}$ become Legendrian isotopic if and only if they have the same invariants. Items~(1) in Proposition~\ref{prop2} show that the stabilizations of the $(L^l_j)^+_{(p,-q)}$ and $(L^l_j)^-_{(p,-q)}$ become Legendrian isotopic after the first has been $-$-stabilized $|-pm+q|$ times and the second has been $+$-stabilized $|-pm+q|$ times, but remain distinct until those stabilizations have been performed; and similarly for $(L^r_j)^\pm_{(p,-q)}$. Item~(3) in that proposition says that the result of $+$-stabilizing $(L^l_j)^+_{(p,-q)}$, $|q-p(m+1)|$ and $-$-stabilizing $(L^l_j)^-_{(p,-q)}$, $|q-p(m+1)|$ times are Legendrian isotopic; and similarly for $(L^r_j)^-_{(p,-q)}$ and $(L^r_j)^+_{(p,-q)}$. Finally, Item~(4) says there are no relations among the stabilizations of the $(L^l_j)^\pm_{(p,-q)}$ other than those noted above (or further stabilizations of these); and similarly for the $(L^r_j)^\pm_{(p,-q)}$. 
\end{proof}
We now turn to cables with slope in the range $(0,1)$.
\begin{proof}[Proof of Theorem~\ref{ifsurg}]
The proof of this theorem is almost identical to the previous theorem, given the hypothesis that Legendrian surgery on the maximal Thurston-Bennequin invariant Legendrian representatives of $K$ yields distinct contact manifolds.
\end{proof}

We now move to cable links and prove Theorem~\ref{negative}
\begin{proof}[Proof of Theorem~\ref{negative}]
Recall that we are given a uniformly thick knot type $K$ and $p$ and $q$ relatively prime with $q/p<\tbb(K)$. Let $L_1, \ldots, L_k$ be the distinct Legendrian knots in $\mathcal{L}(K)$ with $\tb=\lceil q/p\rceil$. The fact that every element in $\mathcal{L}(K_{n(p,q)})$ destabilizes to $(L_i)^\pm_{n(p,q)}$ for some $i$ and $\pm$ follows from the proof of Proposition~\ref{prop1}. The fact that the various standard cable links are distinct follows from the statement for cable knots in Proposition~\ref{prop1}.

The fact that various stabilizations of the links stay distinct follows from Proposition~\ref{prop2} and that they become Legendrian isotopic follows from the proof of Proposition~\ref {prop2}.
\end{proof}
We finally turn to the classification of cable links of twist knots with an even number of negative twists. 
\begin{proof}[Theorem~\ref{lesserlink}]
This immediately follows from Theorems~\ref{knotcable} and~\ref{negative}.
\end{proof}

\section{Uniform thickness of twist knots} \label{sec:uniform}

In this section, we will prove Theorem~\ref{thm:uniform}, the uniform thickness of the twist knots $K_n$ for $n \notin [-3,2]$. We will only consider negative twist knots with $n < -3$ as the theorem was shown for positive twist knots with $n > 2$ by George and Myers in \cite{George13, GeorgeMyers20}. 

Now we review a decomposition of the complement of $K_n$, used to classify Legendrian twist knots in \cite{EtnyreNgVertesi13}. Let $K_n$ be a smooth negative twist knot in the standard contact $S^3$. Consider an embedded sphere in $S^3$ that intersects $K_n$ at four points as shown in Figure~\ref{fig:sphere}. 
\begin{figure}[htb]

{
\begin{overpic}
{fig/twistknotDecomp}
\put(87, 45){$n$}
\put(105, 13){\color{lblue}$U$}
\end{overpic}}
\caption{A sphere that intersects $K_n$ at four points.} 
\label{fig:sphere}
\end{figure}
Now, take a solid torus $N$ in the knot type of $K_n$ with convex boundary, which could have any even number of dividing curves with any rational slope. Then, we obtain the complement of $K_n$ by removing the neighborhood from $S^3$. Now the sphere decomposes the complement of $K_n$ into two genus $2$ handlebodies $H_1$ and $H_2$. We denote the intersection of the sphere and the boundary of the handlebodies, a $4$-punctured sphere, by $S$. The handlebodies $H_1$ and $H_2$ have compressing disks. We indicate the boundary of a compressing disk, $D_i$, for $H_i$ in Figure~\ref{fig:handlebodies}.

\begin{figure}[htb]

{
\begin{overpic}
{fig/handlebody1}
\put(58, 98){$\partial D_1$}
\put(240, 92){\color{lblue}$U$}
\end{overpic}}
\caption{The $4$-punctured sphere $S$. The boundary, $\partial D_1$, of the disk $D_1$ in $H_1$ is shown on the left. If $n$ is even, then $\partial D_2$ is the image of the horizontal curve on the right after $\frac n2$ Dehn twists along the curve $U$. If $n$ is odd, then $\partial D_2$ is the image of the diagonal curve on the right after $\frac{n+1}2$ Dehn twist along the curve $U$. The shaded regions are $P_i$, $i=1,2$, and the outer boundary component of $P_i$ is $U_i$.} 
\label{fig:handlebodies}
\end{figure}

We can assume that the solid torus $N$ has ruling curves with meridional slope, and so we can take $S$ to have Legendrian boundary and perturb it to be convex. Let $U$ be the curve on $S$ that separates the punctures of $S$ as shown in Figure~\ref{fig:sphere}. We can assume this $U$ is Legendrian. The following observation will be a key to our proof of Theorem~\ref{thm:uniform}. 

\begin{lemma} \label{lem:unknot}
  With the notation above, if $\tb(U) = -1$, then the neighborhood $N$ of $K_n$ thickens so that the boundary has two dividing curves with an integral slope.
\end{lemma}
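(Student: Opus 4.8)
The plan is to work entirely in the convex category and to convert the numerical hypothesis $\tb(U)=-1$ into a bypass for $\partial N$ that thickens $N$. First I would perturb the sphere $\Sigma$ (hence the $4$-punctured sphere $S$) and the handlebody boundaries $\partial H_1,\partial H_2$ to be convex, and Legendrian-realize $U$ together with the compressing disks $D_1,D_2$ and the curves $U_1,U_2$. Because $N$ has meridional ruling curves, the four points of $\Sigma\cap K_n$ are Legendrian meridional rulings making up $\partial S$, so $S$ is a convex $4$-punctured sphere with Legendrian boundary; write $\Gamma$ for its dividing set.

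The key translation is the following. The curve $U$ separates the punctures of $S$ into two pairs and bounds a hemisphere $\Delta$ of $\Sigma$; a collar of $U$ in $\Sigma$ misses the punctures, so the framing $\Sigma$ induces on $U$ is the disk framing, which is the Seifert framing of the unknot $U$. Hence $\tb(U)=\tw(U,\Sigma)=-\tfrac12\,|U\cap\Gamma|$, and the hypothesis forces $|U\cap\Gamma|=2$.

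Next I would pin down $\Gamma$ near $\Delta$. By Giroux's criterion, tightness forbids contractible dividing curves on $S$ and on the $\partial H_i$, and the dividing sets must match across $S$ and be compatible with the compressing disks $D_1,D_2$; the roles of the pieces $P_i$ and their outer boundaries $U_i$ are exactly to record this matching. Working on the convex pair of pants $P:=\Delta\setminus N$, whose boundary consists of $U$ and two meridional rulings of $\partial N$, the constraint $|U\cap\Gamma|=2$ together with tightness forces the dividing set of $P$ to contain a boundary-parallel arc cutting off one of the two meridional boundary curves. Such an arc is precisely a bypass for $\partial N$ lying in the complement of $N$, that is, attached from the outside.

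Finally, attaching this (outer) bypass thickens $N$, and I would then invoke Honda's classification of tight contact structures on solid and thickened tori, in the same bookkeeping style used in the proofs of Theorem~\ref{maininteger} and Proposition~\ref{prop1}, to conclude that after this attachment (iterating if necessary) $\partial N$ can be taken to have exactly two dividing curves of integral slope; equivalently, $N$ sits inside a standard neighborhood of a Legendrian representative of $K_n$. The main obstacle is the combinatorial core of the third step: extracting the right dividing configuration from the single input $|U\cap\Gamma|=2$. One must use the two handlebodies and their compressing disks to exclude the configurations in which both intersection points of $U$ with $\Gamma$ are absorbed by arcs that produce only trivial bypasses on $\partial N$, and one must check that the resulting thickened slope is genuinely integral rather than a fractional slope that cannot be reduced to two dividing curves.
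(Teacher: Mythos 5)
Your translation of the hypothesis ($\tb(U)=-1$ if and only if $U$ meets the dividing set of $S$ in exactly two points) is correct and matches the paper's starting point, but the combinatorial core of your argument fails. You claim that tightness together with $|U\cap\Gamma|=2$ forces the dividing set of the pair of pants $P=\Delta\setminus N$ to contain a boundary-parallel arc cutting off one of the meridional boundary components. This is false: the configuration in which one dividing arc runs from $U$ to each of the two punctures and all remaining arcs run from one puncture to the other (the ``vertical type'' of Figure~\ref{fig:dividing2}) meets $U$ twice, has no contractible or boundary-parallel components, and is perfectly compatible with tightness. It actually occurs, for instance whenever $\partial N$ has non-integral dividing slope, and it contains no bypass for $\partial N$ inside $P$ at all. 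This is precisely the case the lemma has to address; you flag it at the end as ``the main obstacle,'' but the resolution you propose (that tightness excludes it) is wrong, so your argument only covers the easy case.

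The paper's proof handles exactly this vertical configuration, and it cannot be done within $S$ alone. If $\partial N$ does not have two dividing curves of integral slope, then there are more than two arcs running between the punctures of each $P_i$, so the imbalance principle produces boundary-parallel dividing curves on \emph{both} compressing disks $D_1$ and $D_2$ of the handlebodies; one then checks that attaching a bypass along the attaching arcs coming from $\partial D_2$ --- which is attached to the \emph{back} of $S$ --- produces a dividing curve on $S$ giving a bypass for $\partial N$, and one iterates this thickening until $\partial N$ has two dividing curves of integral slope. Note also the orientation subtlety the paper records: the bypasses found on $D_1$ are attached to the front of $S$ and do \emph{not} yield thickening bypasses, so any repair of your argument must track which side of $S$ each bypass comes from --- a point your sketch never engages.
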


The proof of this lemma will be given at the end of this section, as it uses some notation established below. We now turn to the proof of our main theorem.

\begin{proof}[Proof of Theorem~\ref{thm:uniform}]
With the notation above, we assume that the neighborhood $N$ of $K_n$ has been thickened as much as possible. In addition, we assume that $U$ has been destabilized as much as possible while remaining in the complement of $N$. 

If $\tb(U)=-1$, then the previous lemma says that $N$ can be thickened to a solid torus with convex boundary having two integral sloped dividing curves, and any such solid torus is a standard neighborhood of a Legendrian representative of $K_n$. According to the classification of Legendrian twist knots \cite{EtnyreNgVertesi13}, this Legendrian knot destabilizes and hence the neighborhood thickens to a standard neighborhood of a Legendrian representative with the maximal Thurston--Bennequin invariant, thus proving that $K_n$ is uniformly thick. 

We will show that if $U$ does not have $\tb=-1$ or $N$ has not been thickened to a standard neighborhood of a Legendrian representative with the maximal Thurston--Bennequin invariant, then we will be able to find a bypass on $S$ to either destabilize $U$ or thicken $N$, but from our assumptions in the first paragraph of the proof, this is not possible, thus proving $K_n$ is uniformly thick. 

To achieve this goal, we first normalize the dividing set on $S$. Let $P_1$ and $P_2$ be pairs of pants in $S$ such that the boundary of $P_i$ is a union of two boundary components of $S$, we call these punctures, and a closed curve $U_i$ that is isotopic to $U$ for $i=1,2$, respectively. See the shaded regions in Figure~\ref{fig:handlebodies}. We can further assume that $U_1$ and $U_2$ are Legendrian and both are Legendrian isotopic to $U$ in $S$. Now suppose $\tb(U) = -m<-1$, since otherwise we would be done as noted above. Then there are $2m$ intersection points between each $U_i$ and the dividing curves in $S$. We can also assume that there are no boundary-parallel dividing curves in $S$; since otherwise, we could use this to find a bypass and thicken the neighborhood of $K_n$. 

Now there are three possible types of dividing sets in each $P_i$: null type, horizontal type, and vertical type. See Figure~\ref{fig:dividing2}.
\begin{figure}[htb]

{
\begin{overpic}
{fig/dividing2}
\end{overpic}}
\caption{Three types of dividing sets in $P_1$ and $P_2$. Left: null type; middle: horizontal type; right: vertical type.} 
\label{fig:dividing2}
\end{figure}
The null type occurs when all dividing curves in $P_i$ run from the punctures to $U_i$; the horizontal type occurs when some dividing curves in $P_i$ run from $U_i$ to itself, separating two punctures; and the vertical type occurs when some dividing curves in $P_i$ run from one puncture to another.  
Since $U_1$ and $U_2$ are Legendrian isotopic, all dividing curves in $S \setminus (P_1 \cup P_2)$ run from $U_1$ to $U_2$. We also note that $P_1$ and $P_2$ should have the same type of dividing set, since otherwise, $U_1$ and $U_2$ cannot be isotopic.

We can label the intersection points of the dividing curves with $U_1$ and $U_2$, and measure the slope of the dividing curves in $S \setminus (P_1 \cup P_2)$ to be $s = \frac{k}{2m}$ for $k \in \mathbb{Z}$. See Figure~\ref{fig:dividing1} for an example. 
\begin{figure}[htb]

{
\begin{overpic}
{fig/dividing1}
\put(48, 74){\tiny $1$}
\put(33, 90){\tiny $2$}
\put(18, 74){\tiny $3$}
\put(18, 50){\tiny $4$}
\put(33, 33){\tiny $5$}
\put(48, 50){\tiny $6$}

\put(106, 74){\tiny $3$}
\put(91, 90){\tiny $2$}
\put(76, 74){\tiny $1$}
\put(76, 50){\tiny $6$}
\put(91, 33){\tiny $5$}
\put(106, 50){\tiny $4$}
\end{overpic}}
\caption{Left: dividing set in $P_1$ and $P_2$. Right: dividing set in $S \setminus (P_1 \cup P_2)$ with slope $s=\frac16$. The slope is positive since it is twisted in a right-handed way along $U$.} 
\label{fig:dividing1}
\end{figure}
We now use the properly embedded disks $D_1$ and $D_2$, shown in Figure~\ref{fig:handlebodies}, to find a bypass that can be attached to $S$. We will use this bypass to modify the dividing set on $S$. According to the disk imbalance principle \cite[Proposition~3.18]{Honda00a}, there exists a bypass in a disk if its boundary intersects the dividing set on $S$ in at least four points.  

The following lemma guarantees that there exists a bypass in either $D_1$ or $D_2$. Recall that $s$ is the slope of the dividing curves in $S \setminus (P_1 \cup P_2)$. 

\begin{lemma} \label{lem:bypass}
  Suppose $n \leq -4$ and $\tb(U)= m \leq - 2$. If $s \leq -1$, then there exists a bypass in $D_1$ that can be attached to $S$ from the front. If $s > -1$, then there exists a bypass in $D_2$ that can be attached to $S$ from the back.
\end{lemma}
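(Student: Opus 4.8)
The plan is to locate a bypass via the disk imbalance principle \cite[Proposition~3.18]{Honda00a}, which requires only that the boundary of $D_1$ (respectively $D_2$) meet the dividing set of $S$ in at least four points. Since $\tb(U) = -m \leq -2$, the curves $U_1$ and $U_2$ each meet the dividing set in $2m \geq 4$ points, and all dividing arcs in $S \setminus (P_1 \cup P_2)$ run from $U_1$ to $U_2$ with slope $s = k/2m$. The strategy is purely combinatorial: I would set up explicit coordinates for the $2m$ intersection points on each $U_i$ (as in Figure~\ref{fig:dividing1}), track how the twisting by slope $s$ permutes the labels as one passes from $U_1$ to $U_2$, and then compute the geometric intersection number $|\partial D_i \cap \Gamma_S|$ where $\Gamma_S$ is the dividing set. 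The content of the lemma is that the sign of $s+1$ dictates which of the two disks has excess intersection: when $s \leq -1$ the imbalance lands in $D_1$, and when $s > -1$ it lands in $D_2$.

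First I would record the curves $\partial D_1$ and $\partial D_2$ on the four-punctured sphere $S$ explicitly. From Figure~\ref{fig:handlebodies}, $\partial D_1$ is the ``horizontal'' curve, while $\partial D_2$ is obtained from a horizontal or diagonal curve by $\lfloor (n+1)/2 \rfloor$ Dehn twists along $U$ (the precise count depending on the parity of $n$). The key point is that a Dehn twist along $U$ shears $\partial D_2$ relative to the dividing set by an amount governed by $n$, and since $n \leq -4$ these twists are numerous and of definite sign. I would then count intersections of each $\partial D_i$ with $\Gamma_S$ as a function of $s$ and $n$: in the region $S \setminus (P_1 \cup P_2)$ the dividing arcs have slope $s$, so $\partial D_1$, which is essentially horizontal, meets them in a number that grows as $s$ becomes more negative, whereas $\partial D_2$, which has been sheared by the negative Dehn twists, meets them in a number that grows as $s$ increases past $-1$. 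Crucially I must also account for the contributions inside $P_1$ and $P_2$, but since $P_1$ and $P_2$ carry the same type of dividing set (null, horizontal, or vertical) and $\partial D_i$ is arranged to interact with only one of the pairs of pants in the relevant way, these contributions are controlled.

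The main obstacle I expect is the bookkeeping across the three possible dividing-set types in the pairs of pants together with the parity of $n$: I must verify that in \emph{every} case (null, horizontal, vertical; $n$ even or odd) the four-point threshold is crossed on the correct disk, and that no boundary-parallel dividing curves interfere — but these have already been excluded in the running assumptions of the proof of Theorem~\ref{thm:uniform}. A secondary subtlety is confirming the claimed attaching side: ``from the front'' for $D_1$ and ``from the back'' for $D_2$. This is a matter of orienting $S$ consistently and checking the sign of the bypass half-disk against the co-orientation of the dividing set; I would settle it by the standard rule that the side of attachment is determined by which complementary region of $\Gamma_S \cap \partial D_i$ the bypass half-disk occupies. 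Once the intersection counts are established, the existence of the bypass is immediate from the imbalance principle, and its location in $D_1$ or $D_2$ is exactly dictated by the threshold computation, so the whole argument reduces to the single careful count as a function of $s$ and $n$.
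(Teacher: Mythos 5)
Your proposal follows essentially the same route as the paper's proof: both invoke the disk imbalance principle and reduce the lemma to counting intersections of $\partial D_1$ (which contains a slope-$0$ arc) and $\partial D_2$ (which, because of the $n$-dependent Dehn twists with $n\leq -4$, contains an arc of slope $\frac n2\leq -2$ for $n$ even, and similarly for $n$ odd) with the slope-$s$ dividing curves, the threshold $s=-1$ deciding which disk picks up at least four intersection points. One small correction: the attaching side requires no co-orientation or dividing-set-region argument --- a bypass found in $D_i$ is automatically attached from the side of $S$ on which $D_i$ lies, namely from the front for $D_1\subset H_1$ and from the back for $D_2\subset H_2$.
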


\begin{proof}
  First, since $m \leq -2$, there are at least four dividing curves in $S \setminus (P_1 \cup P_2)$. Assume $s \leq -1$. Then the purple arc $\gamma_1$ in the first drawing of Figure~\ref{fig:dividing5} has slope $0$, so it intersects the dividing curves in at least $|2m\lceil s\rceil | \geq 4$ points, since each diving curve can be obtained from a curve with slope between $-1$ and $1$ by adding $\lceil s\rceil$ Dehn twists and each Dehn twist will force the curve to intersect a zero-sloped curve an extra time. Thus, by the disk imbalance principle, there exists a bypass in $D_1$ that can be attached to $S$ from the front. 

  \begin{figure}[htb]

{
  \begin{overpic}
  {fig/dividing5}
  \put(65, 67){\tiny $\gamma_1$}
  \put(278, 69){\tiny $\gamma_2$}
  \end{overpic}}
  \caption{Left: A dividing set on $S$ of the vertical type with $s=-1$ and $\gamma_1$ is the purple arc in $\partial D_1$ that intersects the dividing curves in more than four points. Right: $\gamma_2$ is the purple arc in $\partial D_2$.} 
  \label{fig:dividing5}
  \end{figure}

  Now, assume $s > -1$ and $n$ is even. Then the purple arc $\gamma_2$ in the second drawing of Figure~\ref{fig:dividing5} has slope $\frac n2 \leq -2$, so, arguing as above, it intersects the dividing curves in at least $|2m \lceil \frac n2 -  s\rceil|\geq 4$ points. Thus, there exists a bypass in $D_2$ that can be attached to $S$ from the back. A similar argument also works when $n$ is odd. 
\end{proof}

Recall that if we attach a bypass from the front, then the dividing set is modified in a specific way (see \cite{Honda00a}), and when the bypass is attached from the back there is also a specific modification that is the mirror of the one for attaching a bypass to the front of a surface. 

We now investigate possible bypass attachments for $S$. We first observe that if the attaching arc of a bypass lies in $S \setminus (P_1 \cup P_2)$, then the bypass attachment results in a destabilization of $U$. Moreover, according to the bypass sliding lemma \cite[Lemma 1.3]{HondaKazezMatic03}, if the arcs can be slid outside of $P_1 \cup P_2$, we can find an actual bypass outside of $P_1 \cup P_2$ and destabilize $U$. Figure~\ref{fig:dividing3} lists the attaching arcs for bypass in $P_1$ and $P_2$ that cannot be slid out. In the following three lemmas, we examine these arcs and show that they can either be used to destabilize $U$ or to thicken the neighborhood of $K_n$, thereby completing the proof of Theorem~\ref{thm:uniform}. Recall that we can assume that $P_1$ and $P_2$ have the same type of dividing set; otherwise, $U$ can be destabilized. We begin with the case that $P_1$ and $P_2$ are of null type. 

\begin{figure}
\centering
    \begin{subfigure}[b]{0.16\textwidth}
    \centering
    \includegraphics[width=1cm]{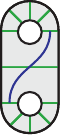}
    \caption{\label{fig:bypassA}}
    \end{subfigure}
\quad
    \begin{subfigure}[b]{0.16\textwidth}
    \centering
    \includegraphics[width=1cm]{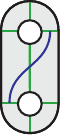}
    \caption{\label{fig:bypassB}}
    \end{subfigure}
\quad
    \begin{subfigure}[b]{0.16\textwidth}
    \centering
    \includegraphics[width=1cm]{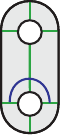}
    \caption{\label{fig:bypassC}}
    \end{subfigure}
\quad
    \begin{subfigure}[b]{0.16\textwidth}
    \centering
    \includegraphics[width=1cm]{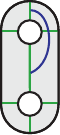}
    \caption{\label{fig:bypassD}}
    \end{subfigure}
\quad
    \begin{subfigure}[b]{0.16\textwidth}
    \centering
    \includegraphics[width=1cm]{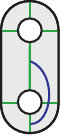}
    \caption{\label{fig:bypassE}}
    \end{subfigure}

\vspace{0.3cm}

    \begin{subfigure}[b]{0.16\textwidth}
    \centering
    \includegraphics[width=1cm]{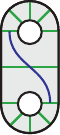}
    \caption{\label{fig:bypassF}}
    \end{subfigure}
\quad
    \begin{subfigure}[b]{0.16\textwidth}
    \centering
    \includegraphics[width=1cm]{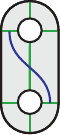}
    \caption{\label{fig:bypassG}}
    \end{subfigure}
\quad
    \begin{subfigure}[b]{0.16\textwidth}
    \centering
    \includegraphics[width=1cm]{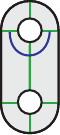}
    \caption{\label{fig:bypassH}}
    \end{subfigure}
\quad
    \begin{subfigure}[b]{0.16\textwidth}
    \centering
    \includegraphics[width=1cm]{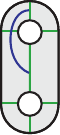}
    \caption{\label{fig:bypassI}}
    \end{subfigure}
\quad
    \begin{subfigure}[b]{0.16\textwidth}
    \centering
    \includegraphics[width=1cm]{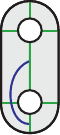}
    \caption{\label{fig:bypassJ}}
    \end{subfigure}
\caption{The list of attaching arcs in $P_1$ and $P_2$ that cannot be slid outside of $P_1 \cup P_2$ when $P_1$ and $P_2$ are either horizontal or vertical.}
\label{fig:dividing3}
\end{figure}

\begin{lemma}\label{lem:null}
  If $P_1$ and $P_2$ are of null type, then $U$ can be destabilized or $N$ can be thickened. 
\end{lemma}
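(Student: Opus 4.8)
The plan is to feed the bypass produced by Lemma~\ref{lem:bypass} through the bypass sliding lemma \cite[Lemma~1.3]{HondaKazezMatic03} and conclude that it either destabilizes $U$ or creates a boundary-parallel dividing curve, each of which contradicts the standing assumptions that $U$ is maximally destabilized and $N$ is maximally thickened. Concretely, by Lemma~\ref{lem:bypass} there is a bypass in $D_1$ attached from the front when $s\leq -1$, and a bypass in $D_2$ attached from the back when $s>-1$; since attaching from the back is the mirror of attaching from the front, I will only treat the first. Let $\gamma$ be the attaching arc of this bypass, a sub-arc of $\partial D_1$ meeting the dividing set of $S$ in three points and cutting off a bypass half-disk.

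First I would record the shape of the dividing set in the null case: in each $P_i$ every dividing arc runs from a puncture to $U_i$, so no arc returns from $U_i$ to itself and no arc joins the two punctures inside $P_i$, while in $S\setminus(P_1\cup P_2)$ all arcs run straight from $U_1$ to $U_2$ with slope $s$. The key observation is that this is precisely the configuration in which the trapped attaching arcs of Figure~\ref{fig:dividing3} cannot occur: because the null-type arcs meet each $U_i$ transversally and proceed monotonically toward the punctures, any portion of $\gamma$ lying in $P_1\cup P_2$ runs alongside a family of parallel dividing arcs and is therefore not trapped. Using the bypass sliding lemma, I would then slide $\gamma$ across $U_1$ (and, if necessary, $U_2$) so that its attaching region lies entirely in $S\setminus(P_1\cup P_2)$.

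Once $\gamma$ lies in $S\setminus(P_1\cup P_2)$, the bypass is attached along an arc of the middle region, and as observed just before the statement of Lemma~\ref{lem:null}, such an attachment destabilizes $U$, contradicting maximality of the destabilization. The only other possibility is that sliding $\gamma$ carries one of its endpoints onto a dividing arc terminating at a puncture, so that instead of crossing into the middle region the half-disk comes to enclose a single puncture; attaching the bypass then produces a boundary-parallel dividing curve around that puncture, which thickens $N$. In either case we contradict the standing assumptions, which is exactly what the lemma asserts.

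The main obstacle is the sliding argument: verifying rigorously that in the null type no attaching arc is trapped in $P_1\cup P_2$ (i.e.\ that none of the configurations in Figure~\ref{fig:dividing3} can arise), and pinning down the borderline configuration in which the slide yields a puncture-parallel curve, giving a thickening, rather than a genuine crossing into $S\setminus(P_1\cup P_2)$, giving a destabilization. This amounts to tracking how the puncture-to-$U_i$ arcs in $P_1$ and $P_2$ are matched through the slope-$s$ arcs of the middle region, and checking each way the outermost sub-arc of $\partial D_1$ can meet this global arrangement.
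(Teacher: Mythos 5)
Your proposal has a genuine gap at its central step: the claim that in the null type \emph{no} attaching arc is trapped in $P_1\cup P_2$, so that every bypass arc can be slid into $S\setminus(P_1\cup P_2)$ by the bypass sliding lemma. This is false, and it is precisely the point the paper's proof turns on. In the null type there is one configuration that cannot be slid out: an attaching arc that intersects two of the dividing arcs running to one puncture and one dividing arc running to the other puncture. Such an arc must pass over the region containing a puncture (a boundary component of $S$), and no sequence of slides along the dividing set can move it across $U_i$ without crossing that puncture; the heuristic that the null-type arcs are ``parallel'' and hence never trap an arc ignores exactly this wrap-around configuration. Your fallback case --- where sliding degenerates and the half-disk ``encloses a puncture,'' producing a boundary-parallel dividing curve --- is not a correct description of what happens with this trapped arc, and the sliding lemma gives you no such dichotomy: an arc either can be slid (preserving the existence of a bypass along the slid arc) or it cannot, and for the trapped arc it simply cannot.

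The paper handles the trapped configuration by a different mechanism: one actually attaches the bypass along the trapped arc (pushes $S$ across it) and then inspects the new dividing set on the resulting convex surface, observing that it necessarily contains a bypass for either $\partial N$ (thickening $N$) or for $U$ (destabilizing $U$). So the correct structure of the proof is: arcs that slide out of $P_1\cup P_2$ destabilize $U$ (as you say, and as noted before the lemma), while the single trapped arc is dealt with by attaching it and finding a \emph{second} bypass afterwards --- not by the sliding lemma at all. Your proof as written would go through only if the trapped configuration did not exist, so to repair it you would need to identify that configuration explicitly and supply the attach-and-re-examine argument for it.
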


\begin{proof}
The only attaching arc for a bypass that cannot be slid outside of $P_1\cup P_2$, and hence gives a destabilization of $U$, is a bypass that intersects two of the arcs near one puncture and one arc near the other puncture. But when pushing $S$ past this bypass, we will see that there is a bypass on $S$ for either $\partial N$ or $U$.
\end{proof}

We now turn to the case when $P_1$ and $P_2$ are horizontal. 

\begin{lemma}\label{lem:horizontal}
  If $P_1$ and $P_2$ are of horizontal type, then one of the following holds:
  \begin{enumerate}
    \item $U$ can be destabilized, or 
    \item the ambient contact structure on $S^3$ is overtwisted.
  \end{enumerate}
\end{lemma}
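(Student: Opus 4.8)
The plan is to feed the bypass produced by Lemma~\ref{lem:bypass} into the dividing set on $S$ and track the two possible outcomes. Since we are assuming $n\leq -4$ and $\tb(U)=m\leq -2$, Lemma~\ref{lem:bypass} gives a bypass in $D_1$ (when $s\leq -1$) or in $D_2$ (when $s>-1$) whose attaching arc lies on $S$. If this attaching arc can be isotoped, via the bypass sliding lemma \cite[Lemma~1.3]{HondaKazezMatic03}, entirely into $S\setminus(P_1\cup P_2)$, then as noted just before the statement the attachment destabilizes $U$ and we are in conclusion~(1). So I would immediately reduce to the case that the arc is (isotopic to) one of the non-slidable attaching arcs recorded in Figure~\ref{fig:dividing3}.

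First I would pin down the horizontal normal form. In the horizontal type each $P_i$ carries an arc running from $U_i$ back to $U_i$ and enclosing exactly one puncture, together with some null arcs joining the punctures to $U_i$; since $U_1$ and $U_2$ are Legendrian isotopic in $S$, the two pairs of pants carry matching horizontal configurations, and all remaining dividing arcs cross $S\setminus(P_1\cup P_2)$ with slope $s$. Using this normal form I would discard from the list in Figure~\ref{fig:dividing3} those attaching arcs that are incompatible with a horizontal dividing set, leaving only a short list to examine.

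For each surviving arc I would perform the bypass attachment and read off the new dividing set using the standard bypass-attachment rule of \cite{Honda00a}, together with its mirror for bypasses attached from the back. The dichotomy I expect is as follows. In some configurations the attachment slides the relevant intersections off $P_1\cup P_2$, after which the modified dividing set admits an honest bypass in $S\setminus(P_1\cup P_2)$ and $U$ destabilizes, giving conclusion~(1). In the remaining configurations the attachment amalgamates the once-punctured horizontal arc with a neighboring arc so as to produce a closed dividing curve on $S$ bounding a disk that contains \emph{no} puncture. Such a curve is null-homotopic in the $4$-holed sphere $S$, so by Giroux's criterion the contact structure in a neighborhood of $S$, and hence the ambient structure on $S^3$, is overtwisted; this is conclusion~(2).

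The main obstacle will be the bookkeeping in this last case analysis: one must verify that in precisely the non-destabilizing configurations the bypass genuinely merges the once-punctured horizontal region into a contractible dividing curve (an overtwisted disk), rather than simply reproducing another admissible horizontal or null pattern, and one must check that the resulting closed curve bounds a disk with no puncture rather than one puncture, the latter being homotopically essential and therefore harmless. A secondary point to keep straight is the dependence on the sign of $s$ and on whether the bypass was located in $D_1$ or $D_2$ and attached from the front or the back, since these alter the local model for the dividing-set modification; I would treat the regimes $s\leq -1$ and $s>-1$ in parallel, using the front/back mirror symmetry to avoid duplicating the computation.
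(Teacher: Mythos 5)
Your reduction to the non-slidable attaching arcs of Figure~\ref{fig:dividing3} matches the paper (in the horizontal case only arcs~\subref{fig:bypassA} and~\subref{fig:bypassF} survive), and your handling of the sign of $s$ and the front/back dichotomy is also the paper's. But the core of your argument---the mechanism by which overtwistedness is detected---is wrong, and it is wrong in exactly the way you flagged as the ``main obstacle.'' The non-destabilizing bypass does \emph{not} amalgamate the horizontal arc into a closed dividing curve bounding an unpunctured disk. Attaching it simply shifts the slope of the dividing curves in $S\setminus(P_1\cup P_2)$ by $\frac{1}{2m}$ (arc~\subref{fig:bypassF} increases $s$ when the bypass comes from $D_1$ in front; arc~\subref{fig:bypassA} decreases $s$ when it comes from $D_2$ in back), reproducing another admissible horizontal configuration. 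So the verification you deferred fails, and no single bypass attachment ever produces a contractible dividing curve on $S$; your dichotomy collapses into ``destabilize $U$ or get another horizontal pattern,'' which proves nothing.

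The paper's proof gets around this with two ideas you are missing. First, iteration: if the only available bypass is the slope-shifting one, attach it repeatedly; since $s=\frac{k}{2m}$ changes by $\frac{1}{2m}$ each time, after finitely many steps the slope becomes integral. Second, the overtwistedness certificate is not Giroux's criterion applied to $S$ itself: even at integral slope the closed dividing curves that form on $S$ are essential (each separates two punctures from the other two), never null-homotopic, so your test would never trigger. Instead, one caps off the four punctures of $S$ by convex disks inside $N$, obtaining a closed convex sphere whose dividing set has more than one component; by Giroux's criterion for spheres this forces the ambient contact structure to be overtwisted, which is conclusion~(2). Without the iteration step and the passage to the capped-off sphere, your argument cannot reach either conclusion of the lemma.
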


\begin{proof}
It is straightforward to verify that any attaching arc can be slid outside of $P_1 \cup P_2$, except for \subref{fig:bypassA} and \subref{fig:bypassF} in Figure~\ref{fig:dividing3}. We first assume that $s \leq -1$. Then, according to Lemma~\ref{lem:bypass}, we can find a bypass contained in $D_1$, so the bypass is attached to the front of $S$. In this case, attaching \subref{fig:bypassA} destabilizes $U$. Also, attaching \subref{fig:bypassF} increases the slope of dividing curves in $S\setminus (P_1\cup P_2)$ by $\frac{1}{2m}$. Therefore, any bypass except for \subref{fig:bypassF} will destabilize $U$, so we assume we can only find \subref{fig:bypassF}. Then we can keep increasing the slope of dividing curves in $S \setminus (P_1\cup P_2)$ and obtain an integral slope. Recall that $S$ was a subset of a sphere in $S^3$ and after we cap off the punctures of $S$ (by disks in the neighborhood $N$), there is more than one dividing curve on the sphere, which implies that the ambient contact structure in $S^3$ is overtwisted and contradicts that the knot lies in the standard contact $S^3$. See Figure~\ref{fig:dividing4}. 

\begin{figure}[htb]

{
\begin{overpic}
{fig/dividing4.1}
\end{overpic}}
\caption{A dividing set on $S$ of the horizontal type with $s=0$.} 
\label{fig:dividing4}
\end{figure}  
 
If $s > -1$, then we can use Lemma~\ref{lem:bypass} to find a bypass contained in $D_2$, and attaching it results in a bypass attached from the back of $S$. Attaching \subref{fig:bypassA} decreases the slope of the dividing curves in $S\setminus(P_1\cup P_2)$ by $\frac{1}{2m}$ and attaching \subref{fig:bypassF} destabilizes $U$. Thus, an argument similar to the one above also works here. 
\end{proof}

We now turn to the case that $P_1$ and $P_2$ are vertical.

\begin{lemma}\label{lem:vertical}
  If $P_1$ and $P_2$ are of vertical type, then one of the following holds:
  \begin{enumerate}
    \item $U$ can be destabilized, or 
    \item there exists a bypass that thickens the neighborhood of $K_n$. 
  \end{enumerate}
\end{lemma}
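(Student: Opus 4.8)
The plan is to run the same dichotomy that drove the horizontal case, splitting on the sign of $s$ via Lemma~\ref{lem:bypass}: when $s\leq -1$ I find a bypass inside $D_1$ and attach it to the front of $S$, and when $s>-1$ I find a bypass inside $D_2$ and attach it to the back, the two situations being mirror images of one another. First I would reduce the list of relevant attaching arcs. Just as in Lemma~\ref{lem:horizontal}, I would check that every arc in Figure~\ref{fig:dividing3} can be slid outside of $P_1\cup P_2$ — and hence, by the bypass sliding lemma \cite[Lemma~1.3]{HondaKazezMatic03}, yields a genuine bypass in $S\setminus(P_1\cup P_2)$ that destabilizes $U$, giving conclusion~(1) — \emph{except} for the short list of arcs that are genuinely trapped by the vertical dividing curves running between the two punctures of each $P_i$. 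For the vertical configuration these trapped arcs fall into two kinds: those whose attaching region straddles a vertical dividing curve and so still destabilizes $U$ (again conclusion~(1)), and those that merely change the slope $s$ of the dividing curves in $S\setminus(P_1\cup P_2)$ by $\pm\frac{1}{2m}$.

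If any destabilizing arc is available I am done, so the substance of the proof is the remaining case in which, for a fixed sign of $s$, the only bypass I can produce is slope-changing. Here I would iterate: attaching such a bypass changes $s$ by $\frac{1}{2m}$ in a definite direction (increasing when attaching from the front for $s\le -1$, decreasing from the back for $s>-1$), exactly as arc~\subref{fig:bypassF} behaved in the horizontal proof. Since the contact structure on $S^3$ is tight and the complement of $N$ is fixed, this process cannot continue indefinitely without the slope reaching an integer; I would push $s$ to the nearest integral value and examine the resulting normalized dividing set on $S$.

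The crux — and the step I expect to be the main obstacle — is to show that at an \emph{integral} slope the vertical configuration produces a bypass that thickens $N$, rather than the overtwistedness that the horizontal case produced. The mechanism is that a vertical dividing arc joins one puncture of $S$ to another, and the punctures are meridional circles on $\partial N$; so after rounding the Legendrian corners where $S$ meets $\partial N$ (as in \cite{Honda00a}), such an arc is glued end-to-end with the dividing set of $\partial N$. At integral $s$ the vertical arcs together with the now-integral family in $S\setminus(P_1\cup P_2)$ force an attaching arc that, when $S$ is pushed across the $D_1$- or $D_2$-bypass, descends onto $\partial N$ and meets the dividing set of $\partial N$ in the pattern of a thickening bypass. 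The delicate bookkeeping is to track the dividing set near the four punctures through the edge-rounding and confirm both that the transferred arc has endpoints on a single dividing curve of $\partial N$ and crosses the dividing set the correct number of times, and that the resulting bypass is attached on the outside of $N$ so that it genuinely enlarges the neighborhood, giving conclusion~(2). Establishing this transfer for both sign cases (front attachment when $s\le -1$, back attachment when $s>-1$) completes the proof, and together with Lemmas~\ref{lem:null} and~\ref{lem:horizontal} finishes Theorem~\ref{thm:uniform}.
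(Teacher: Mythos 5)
Your setup matches the paper's: the dichotomy on the sign of $s$ via Lemma~\ref{lem:bypass} (front attachment from $D_1$ when $s\le -1$, back attachment from $D_2$ when $s>-1$), the use of the bypass sliding lemma to dispose of arcs that can be slid out of $P_1\cup P_2$, and the identification of the slope-changing arcs as the potential obstruction. But your proof then diverges at exactly the point you flag as "the crux," and that is where the gap lies. Your plan is to iterate the slope-changing bypass until $s$ becomes integral and then to produce a thickening bypass by an edge-rounding transfer onto $\partial N$; this transfer is never established, only described as a hoped-for mechanism, and the paper explicitly warns that the naive analogue of the horizontal argument fails here ("changing the slope does not yield an overtwisted contact structure as in the previous proof"), so nothing forces anything useful to happen at integral slope. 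The paper's actual resolution is entirely different and makes the iteration unnecessary: a positional analysis shows the problematic case is \emph{vacuous}. In the vertical case (after first reducing to a single vertical component in each $P_i$, a step you omit), the only trapped arc that fails to yield a boundary-parallel dividing curve under front attachment is \subref{fig:bypassB}, and when $s\le-1$ this arc simply cannot be realized along $\partial D_1$ --- the only arcs occurring there are \subref{fig:bypassE}, \subref{fig:bypassG}, \subref{fig:bypassI} and slidable ones (see Figure~\ref{fig:dividing5}); symmetrically, when $s>-1$ the bad arc \subref{fig:bypassG} cannot occur along $\partial D_2$, where one only finds \subref{fig:bypassB}, \subref{fig:bypassD}, \subref{fig:bypassJ} and slidable arcs. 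Hence every bypass that Lemma~\ref{lem:bypass} produces either destabilizes $U$ or yields a boundary-parallel dividing curve that thickens $N$, with no iteration at all.

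Two further inaccuracies in your sketch are worth noting, since they show the combinatorics was being guessed rather than checked. First, your slope-change directions are imported from the horizontal case and are backwards for the vertical one: in the paper, the front-attached arc \subref{fig:bypassB} \emph{decreases} $s$ by $\frac{1}{2m}$ and the back-attached arc \subref{fig:bypassG} \emph{increases} it, the opposite of what \subref{fig:bypassF} and \subref{fig:bypassA} do in the horizontal case. Second, your classification of trapped arcs into "destabilizing" and "slope-changing" misses the class that actually dominates the vertical case: arcs producing boundary-parallel dividing curves on $S$, which give conclusion~(2) directly rather than conclusion~(1). Without the positional exclusion of \subref{fig:bypassB} and \subref{fig:bypassG}, your argument cannot close, since after reaching an integral slope your iteration has no contradiction and no constructed bypass to fall back on.
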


\begin{proof}
  It is straightforward to verify that if there is more than one vertical component, then all possible bypass attachments destabilize $U$ or yield a boundary-parallel dividing curve that can be used to thicken the neighborhood of $K_n$. Thus we assume that there is a single vertical component in each $P_i$. In this case, it is routine to check that every attaching arc can be slid outside of $P_1 \cup P_2$ except for the ones in Figure~\ref{fig:dividing3}. 
  
  Assume $s \leq -1$. Then by Lemma~\ref{lem:bypass}, we can find a bypass contained in $D_1$. In this case, bypasses are attached from the front of $S$, and \subref{fig:bypassB} is the only attaching arc that does not yield a boundary-parallel dividing curve. Attaching \subref{fig:bypassB} decreases the slope of the dividing curves in $S \setminus (P_1 \cup P_2)$ by $\frac{1}{2m}$. Notice that changing the slope does not yield an overtwisted contact structure as in the previous proof. However, when $s \leq -1$, the only attaching arcs that can be found in $\partial D_1$ are \subref{fig:bypassE}, \subref{fig:bypassG}, \subref{fig:bypassI} and the ones that can be slid outside of $P_1 \cup P_2$; in particular, we cannot get a bypass of type (b) along $\partial D_1$. See Figure~\ref{fig:dividing5} for an example when $s=-1$. Thus we can destabilize $U$ or thicken the neighborhood of $K_n$ after attaching a bypass in $D_1$. 

  If $s > -1$, then by Lemma~\ref{lem:bypass}, we can find a bypass contained in $D_2$. In this case, the bypass is attached to the back of $S$, and \subref{fig:bypassG} is the only attaching arc that does not yield a boundary-parallel dividing curve. Attaching \subref{fig:bypassG} increases the slope of the dividing curves in $S \setminus (P_1\cup P_2)$ by $\frac{1}{2m}$. However, when $s > -1$, the only attaching arcs that can be found in $\partial D_2$ are \subref{fig:bypassB}, \subref{fig:bypassD}, \subref{fig:bypassJ} and the ones that can be slid outside of $P_1 \cup P_2$. Thus, we can destabilize $U$ or thicken the neighborhood of $K_n$ after attaching a bypass in $D_2$. 
\end{proof}
So we see in all cases we can destabilize $U$ or thicken $N$, leading to a contradiction unless $\tb(U)$ is $-1$ and $N$ is a neighborhood of a maximal Thurston-Bennequin invariant representative of $K_n$. 
\end{proof}

We now prove the key lemma used above.
\begin{proof}[Proof of Lemma~\ref{lem:unknot}]
If $\tb(U)=-1$, then the dividing set on $P_i$ intersects $U_i$ exactly twice. Thus, if there is not a bypass on $P_i$ that allows one to thicken the neighborhood $N$ the dividing set on $P_i$ must have one dividing curve running from $U_i$ to one of the punctures, one dividing curve running from $U_i$ to the other puncture and all other dividing curves must run between the punctures (this is a simple case of $P_i$ being of vertical type).  

If $N$ does not have an integral dividing slope, then there will be more than $2$ dividing curves running between the punctures of $P_i$. Thus, both $D_1$ and $D_2$ will have at least two boundary parallel dividing curves giving bypasses for $S$. One may easily check that attaching a bypass to $S$ along all the bypass attaching arcs on $\partial D_2$ will result in a dividing curve on $S$ that will give a bypass for $N$ and hence can be used to thicken $N$. (We note that since the bypasses on $D_1$ are attached to the front of $S$, it is not true that such a bypass exists on $D_1$.) Thus, we may thicken $N$ until it has two dividing curves with integer slope. In this case, any bypasses found for $S$ along $D_1$ or $D_2$ can only affect the slope of the dividing curves on $S-(P_1\cup P_2)$. 
\end{proof}

\def\cprime{$'$}

\end{document}